\documentclass{article}
\usepackage{graphicx,amsmath,amsfonts,amsthm,enumitem,comment,amssymb} 
\usepackage{amssymb}
\usepackage{mathtools}
\usepackage{geometry}[margin=1in]
\newtheorem{theorem}{Theorem}[section]
\newtheorem{lemma}{Lemma}[section]

\newtheorem{definition}{Definition}[section]
\newtheorem{proposition}{Proposition}[section]

\newcommand\enumar{\begin{enumerate}[noitemsep,label={\arabic*.}]}
\newcommand\enumrom{\begin{enumerate}[noitemsep,label={(\roman*)}]}
\newcommand\enumalph{\begin{enumerate}[noitemsep,label={(\alph*)}]}
\newcommand\enumend{\end{enumerate}}
\newcommand\itemgo{\begin{itemize}}
\newcommand\itemend{\end{itemize}}

\newcommand{\N}{\mathbb{N}}
\newcommand{\R}{\mathbb{R}}
\newcommand{\1}{\mathbf{1}}

\newcommand{\ep}{\epsilon}
\newcommand{\dlt}{\delta}
\newcommand{\ren}{\text{ren}}

\newcommand{\flsh}{\text{flsh}}
\newcommand{\pin}{\text{pin}}
\newcommand{\lf}{\lfloor}
\newcommand{\rf}{\rfloor}

\newcommand{\F}{\mathcal{F}}
\newcommand{\exit}{\text{ex}}
\newcommand{\lbd}{\lambda}
\newcommand{\nid}{\noindent}
\newcommand{\I}{\mathcal{I}}

\begin{document}

\begin{center}
\textbf{\large Time of appearance of a large gap in a dynamic Poisson point process}

\vspace{7mm}

 Eric Foxall\textsuperscript{1}, Clément Soubrier\textsuperscript{2}, 
\end{center}
\vspace{5mm}
{\small
$^{1}$ Department of Mathematics, University of British Columbia Okanagan, Kelowna, BC V1V 1V7, Canada\\
$^{2}$ Department of Mathematics, University of British Columbia, Vancouver, BC V6T 1Z4, Canada\\
}
\vspace{5mm}

\begin{abstract}
We study the distribution of the `gap time', the first time that a large gap appears, in the spatial birth and death point process on $[0,1]$ in which particles are added uniformly in space at rate $\lbd$ and are removed independently at rate $1$, as a function of the parameter $\lbd$ and the specified gap size function $w_\lbd$ as $\lbd\to\infty$. If $w_\lbd$ is a large enough multiple of the typical largest gap $(\log(\lbd)+O(1))/\lbd$ and the initial distribution has a high enough local density of particles and not too many particles in total, then the gap time, scaled by its expected value, converges in distribution to exponential with mean $1$. If in addition $\limsup_\lbd w_\lbd < 1$ then the expected time scales like $e^{\lbd w_\lbd}/(\lbd^2 w_\lbd(1-w_\lbd))$.\\

\end{abstract}

\noindent {\footnotesize MSC 2020: 60K35, 60G55, 60F10\medskip \\ 
\noindent Keywords:\\ spatial birth and death process, dynamic Poisson point process, regenerative process, rare event, hitting time}\\

\normalsize

\section{Introduction}\label{section:Introduction}

Consider the following family $X_\lambda = (X_\lambda(t))_{t \in \R_+}$ of spatial birth and death processes, indexed by $\lambda>0$ and taking values in finite subsets of $[0,1]$ (see Figure \ref{fig:model}): at exponential rate $\lambda$, a uniform random point is added to $X_\lambda$, and each particle in $X_\lambda$ is independently removed at rate $1$. That is,
\begin{equation}
\begin{cases} 
X_\lambda \to X_\lambda \cup \{u\} & \text{at rate} \ \lambda,\\
\text{for each} \ x \in X_\lambda, \ X_\lambda\to X_\lambda\setminus \{x\}  & \text{at rate} \ 1,\end{cases}
\label{eq:proc}
\end{equation}
where $u$ is  independently sampled from Uniform$[0,1]$ when an addition occurs. This process can also be seen as a \textit{dynamic Poisson point process}: 
if $X_\lambda(0)=\emptyset$, then for all $t>0$, $X_\lambda(t)$ is a Poisson point process with intensity $\lambda(1-e^{-t})$ (see Lemma \ref{lem:renprop}), and the unique stationary distribution of $X_\lambda$ is the probability measure $\pi_\lbd$ of a Poisson point process on $[0,1]$ with intensity $\lambda$ (see Lemma \ref{lem:dynerg}). We will often use the shorthand `dynamic Ppp$(\lbd)$' for the process $X_\lbd$ and `Ppp$(\lbd)$' for its stationary distribution, and `dynamic Ppp' when referring to the family of processes $(X_\lbd)_{\lbd>0}$.\\

In this article, we characterize the time of the first appearance of a large gap of a given minimum size $w_\lbd$, where a gap at time $t$ is defined to be any interval in $[0,1]$ that is disjoint from $X_\lambda(t)$. In other words, we study the hitting time of $A(w_\lambda)$, where  
\begin{align}\label{eq:Aw}  
A(w) = \{X\colon X\cap (y,y+w)=\emptyset \ \text{for some} \ y \in [0,1-w]\}.
\end{align}

Note that, in the definition of $A(w)$, the location of the gap is not specified; if it was, the problem would reduce to the hitting time of $0$ for the $\N$-valued birth and death process that gives the number of particles in the specified interval.\\

With respect to $\pi_\lbd$, the largest gap is approximately the maximum of $\lambda$ independent exponential random variables with mean $1/\lambda$, which scales like $\log(\lambda)/\lambda+O(1)$. Thus, for a gap of size $\ge w_\lbd$ to be large, $w_\lbd$ should be larger than $\log(\lbd)/\lbd$. For our estimates to work, a comfortable lower bound is that $\liminf_\lbd \lbd w_\lbd /\log(\lambda)\ge C$ for large enough $C>0$.\\

Our main findings are that, for suitable initial distributions,
\enumrom
\item the hitting time of $A(w_\lbd)$, normalized by its expected value, converges in distribution to an exponential random variable, and
\item the expected hitting time scales like a specific function of $\lbd$ and $w_\lbd$ that we compute.
\enumend

Our notion of `suitable initial distribution' is such that the process is unlikely to develop a large gap before its distribution approaches the stationary distribution. For this to hold, the initial distribution cannot develop a large gap in a short time, and cannot have too many particles, in the sense of conditions (c)-(d) in Theorem \ref{thm:main} below. The following definition specifies a condition that is sufficient for the process not to develop a large gap in a short time.

\begin{definition}\label{def:alphadense}
Let $X\subset [a,b]$ and $w>0$. Then $X$ is \emph{$\alpha$-dense at scale $w$} if $|X\cap (y,y+w)| \ge \alpha w$ for every $y \in [a,b-w]$. 

\end{definition}

The following is our main result.

\begin{theorem}\label{thm:main}
With $X_\lambda$ and $A(w_\lbd)$ as above, let $\tau_{A(w)}=\inf\{t\colon X_\lambda(t) \in A(w)\}$ denote the $w_\lambda$-gap time. For each $\alpha>0$ there is $C>0$ so that if
\enumalph
\item $\liminf_\lambda w_\lambda /\log(\lambda) \ge C$,
\item $\limsup_\lambda w_\lambda<1$,
\item $P(X_\lambda(0) \ \text{is} \ \alpha \lbd\text{-dense at scale } w_\lambda/2)\to 1$ as $\lbd\to\infty$ and
\item $P(\log |X_\lbd(0)|\le C_1\lbd)\to 1$ for some $C_1>0$ as $\lambda\to\infty$
\enumend
then
\enumrom
\item for every $t>0$, $P(\tau_{A(w_\lbd)}/E[\tau_{A(w_\lbd)}] > t) \to e^{-t}$ as $\lbd\to\infty$ and
\item $E[\tau_{A(w_\lbd)}] \sim e^{\lambda w_\lbd}/(\lambda^2w_\lbd(1-w_\lbd))$.
\enumend
\end{theorem}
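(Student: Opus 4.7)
The plan is to derive both (i) and (ii) from two ingredients: a \emph{mixing step} showing that, under (c) and (d), the process $X_\lbd$ reaches a distribution close in total variation to $\pi_\lbd$ within a burn-in time $T_\lbd$ without entering $A(w_\lbd)$; and a \emph{stationary flux} computation of the rate $\Phi_\lbd$ at which a stationary Ppp$(\lbd)$ enters $A(w_\lbd)$, leading to $\Phi_\lbd \sim \lbd^2 w_\lbd(1-w_\lbd)e^{-\lbd w_\lbd}$ under (a) and (b). Granted these, (ii) follows from $E[\tau_{A(w_\lbd)}] \sim 1/\Phi_\lbd$ via Kac's formula / a renewal argument, and (i) follows by partitioning $[0,\infty)$ into length-$T_\lbd$ blocks, approximating the per-block hitting probability by $\Phi_\lbd T_\lbd \to 0$, and using near-independence across blocks to get Poisson convergence of the hitting times, hence the exponential limit for $\tau_{A(w_\lbd)}/E[\tau_{A(w_\lbd)}]$.

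For the mixing step I would use the space-time Poisson construction: particles correspond to points of a rate-$\lbd$ Poisson process on $[0,1]\times \R_+$ with independent $\mathrm{Exp}(1)$ lifetimes, so $X_\lbd(t)$ decomposes as the union of an $e^{-t}$-thinning of $X_\lbd(0)$ and an independent Ppp with intensity $\lbd(1-e^{-t})$ (Lemma \ref{lem:renprop}). Condition (d) gives $|X_\lbd(0)|\le e^{C_1\lbd}$ with high probability, so choosing $T_\lbd$ a suitable constant times $\lbd$ makes the expected number of surviving initial particles tend to $0$ and places the new part within TV distance $\lbd e^{-T_\lbd}$ of $\pi_\lbd$. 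Condition (c) enters through a density argument: in each of $O(1/w_\lbd)$ test windows of length $w_\lbd/2$, the initial $\gtrsim \alpha\lbd w_\lbd/2$ particles together with the steadily accumulating new births keep the density at $\Omega(\lbd)$ per unit length throughout $[0,T_\lbd]$, so by a union bound over test windows at Poisson event times, $P(\tau_{A(w_\lbd)}\le T_\lbd)=o(1)$.

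For the stationary flux, only deaths move $X$ from $A(w_\lbd)^c$ to $A(w_\lbd)$, and by stationarity this flux equals the opposite,
\[
\Phi_\lbd \;=\; \lbd \int_0^1 P_{\pi_\lbd}\bigl(X\in A(w_\lbd),\ X\cup\{y\}\notin A(w_\lbd)\bigr)\,dy.
\]
Under (b), the typical contributing $X$ has a single gap of length $g\in[w_\lbd,2w_\lbd]$; by Slivnyak applied twice to Ppp$(\lbd)$ the density of interior gaps of exact length $g$ is $\lbd^2(1-g)e^{-\lbd g}$, and adding a uniform point eliminates such a gap iff it falls in a subinterval of length $2w_\lbd-g$. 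Integrating,
\[
\Phi_\lbd \;\sim\; \int_{w_\lbd}^{2w_\lbd} \lbd^3 (1-g)(2w_\lbd-g)\, e^{-\lbd g}\,dg \;\sim\; \lbd^2\, w_\lbd(1-w_\lbd)\, e^{-\lbd w_\lbd},
\]
with boundary-gap and multi-large-gap contributions subleading by (a).

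The main obstacle I anticipate is making the window Poisson approximation for (i) quantitative: the per-window hitting events must be near-independent with probability $\Phi_\lbd T_\lbd(1+o(1))$, which requires that conditioning on non-hit up to time $kT_\lbd$ does not displace the law too far from $\pi_\lbd$. This is effectively a quasi-stationarity statement, and controlling it with matching constants uniformly over $k$ up to $\sim 1/(\Phi_\lbd T_\lbd)$ --- together with the sharp Palm computation for $\Phi_\lbd$ identifying the prefactor $\lbd^2 w_\lbd(1-w_\lbd)$ to leading order, rather than merely the exponential rate --- is the main technical effort.
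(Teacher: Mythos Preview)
Your stationary-flux computation is essentially the paper's Lemma~\ref{lem:exitest} combined with Lemma~\ref{lem:piAw}: the paper computes the exit rate $q_\exit(A(w_\lbd))\sim \lbd w_\lbd\,\pi_\lbd(A(w_\lbd))\sim \lbd^2 w_\lbd(1-w_\lbd)e^{-\lbd w_\lbd}$, which by stationarity equals your entrance flux $\Phi_\lbd$. Your burn-in step is also close in spirit to the paper's Proposition~\ref{prop:iv}, though the paper couples to a specific regeneration measure $\nu_\lbd$ (the law of $\lf\lbd\rf$ i.i.d.\ uniforms) rather than to $\pi_\lbd$ in total variation.

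The genuine divergence is in how (i) is obtained, and here your self-identified obstacle is the crux. Your block scheme requires that, conditioned on not having hit $A(w_\lbd)$ in blocks $1,\dots,k$, the law at time $kT_\lbd$ is still close enough to $\pi_\lbd$ that the block-$(k{+}1)$ hitting probability is $\Phi_\lbd T_\lbd(1+o(1))$, uniformly over $k$ up to order $1/(\Phi_\lbd T_\lbd)$. This is a quasi-stationarity statement that is not easy to establish directly: the conditioning biases the configuration away from near-gap states, and you would need to quantify that bias and show it washes out within each block. The paper avoids this entirely by working with a \emph{regeneration time} $\eta_\lbd$ (flush all initial particles, then wait until $|X_\lbd|=\lf\lbd\rf$) at which $X_\lbd$ has \emph{exactly} the law $\nu_\lbd$, independent of the past. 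The resulting regeneration cycles are one-dependent (Lemma~\ref{lem:onedep}), so grouping them into super-blocks with a one-cycle gap yields genuinely i.i.d.\ hitting indicators with no conditioning correction at all; the first hitting super-block is then exactly geometric, and the exponential limit follows from a weak law of large numbers on cycle durations (Proposition~\ref{prop:exp}). Similarly, your appeal to ``Kac's formula / a renewal argument'' for $E[\tau_{A(w_\lbd)}]\sim 1/\Phi_\lbd$ hides a no-early-returns condition (the process, just after exiting $A(w_\lbd)$, must regenerate before re-entering with high probability); the paper isolates this as Proposition~\ref{prop:tauex} and verifies it in Lemma~\ref{lem:prop3cond} using the structure of the stationary exit distribution $\mu_\lbd$. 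In short, your outline is plausible but the quasi-stationary control you flag is a real gap in your route; the paper's regeneration structure is precisely the device that trades that analytic difficulty for exact independence.
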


Statement (i) says that the asymptotic distribution is exponential and statement (ii) gives the scaling of the expected value, which is equal to $1/q_\exit(A(w_\lbd))$ where $q_\exit(A(w_\lbd))\sim \lbd w_\lbd\pi_\lbd(A(w_\lbd))$ (see Lemma \ref{lem:exitest}) is the stationary exit rate from the event $A(w_\lbd)$ and $\pi_\lbd(A(w_\lbd)) \sim \lbd (1-w_\lbd)e^{-\lbd w_\lbd}$ is the stationary probability of $A(w_\lbd)$. The factor $\lbd w_\lbd$ in $q_\exit(A(w_\lbd))$ corresponds to the rate at which a point appears in a gap of size $w_\lbd$. Condition (a) ensures that $A(w_\lbd)$ is sufficiently rare, while (c)-(d) ensure respectively that $X_\lambda$ is unlikely to hit $A(w_\lbd)$ right away and doesn't take too long to reach equilibrium. Condition (b) ensures that statement (ii) has the given form; if, for example, $w_\lbd=1$ for all $\lbd$ (a case that is much simpler to handle) then $\pi(A(w_\lbd))=e^{-\lbd}$ and $1/q_\exit(A(w_\lbd)) \sim e^\lbd/\lbd$.\\

To visualize the dynamic Ppp and interpret the theorem, consider the following physical model (see Figure \ref{fig:model}): suppose  particles stochastically and independently attach (with rate $\lambda$) and detach (with rate 1) to a domain (e.g. molecules binding to the membrane of a cell). 
The binding positions can then be modelled in 1D by our stochastic process \eqref{eq:proc}, with the domain being [0,1]. Once attached, suppose that particles also bind with other attached particles that are within distance less than $w_\lbd$ to form a cluster. Assuming we start with one cluster covering the whole domain, the aforementioned hitting time then represents the first time when this cluster will split. As long as splitting requires a gap that is some large enough number of times as large as the typical largest gap (condition (b)), then in the large $\lambda$ regime ($\lambda$ being the average number of particles at stationarity), the theorem says that this time is exponential, and gives the rate, which quantifies the stability of the cluster.\\

\begin{figure}[!ht]\label{fig:model}
	\centering
        \includegraphics[width=0.99\textwidth]{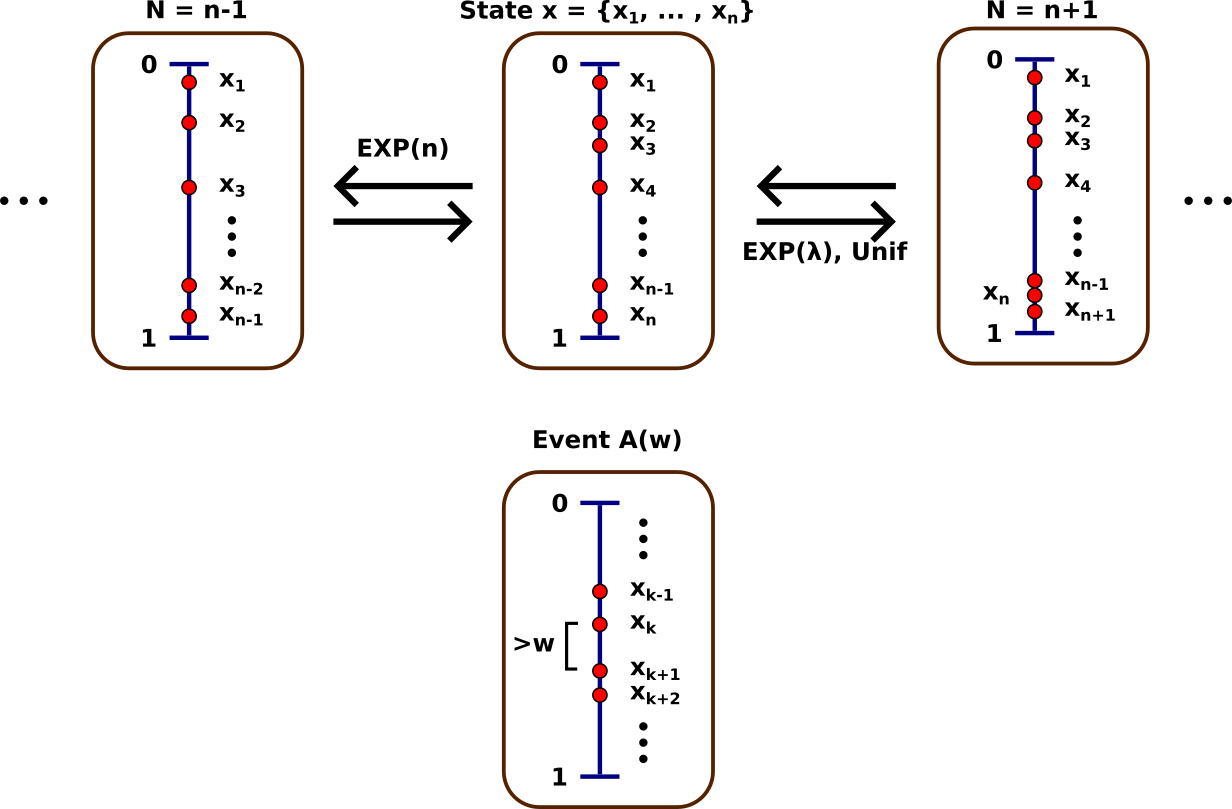}
	\caption{Schematic representation of the dynamic Poisson point process, with a large gap event $A(w)$. }
\end{figure}

In the dynamic Ppp, particles are added and removed but the particles do not move. Concerning the opposite situation, there are various examples of first passage time problems for rare events associated with particles undergoing Brownian motion, with applications to single bio-molecules reaching small traps in various domains \cite{ coombs2009diffusion,schuss2007narrow}, or signalling pathways activation requiring large numbers of molecular binding \cite{Daoduc2010thres}. Perhaps the most closely related example is the first time for a fixed region (representing a gap) to become emptied of a large fixed number $N$ of Brownian particles, a problem that was studied in \cite{newby2016first} and for which detailed asymptotics were obtained for the expected hitting time as $N\to\infty$.\\

The dynamic Ppp is a family of spatial birth and death processes (SBDs). An SBD can be formally defined, roughly following \cite{garcia2006spatial}, as any continuous-time Markov process whose state space is a collection of finite or countable subsets of a complete, separable metric space $S$, and whose transitions are characterized by local birth and death rates $b(x,X), d(x,X)$. These rates are defined with respect to a reference measure $\beta$ on $S$, such that the rate at which a new point appears in $B\subset S$, when the state of the process is $X$, is equal to $\int_{x \in B} b(x,X)\beta(dx)$, and the rate at which each point $x \in X$ is removed is equal to $d(x,X)$. The dynamic Ppp has $S=[0,1]$ and is perhaps the simplest family of SBDs, as $\beta$ is Lebesgue measure and $b=\lbd, \ d=1$ do not depend on $x,X$; in fact, for each $\lbd$, $X_\lbd$ can be viewed as a spatial version of the $M/G/\infty$ queue, a queuing system with infinitely many servers (spots where particles can attach in our physical representation) and no waiting time (so particles instantaneously attach as they arrive) \cite{benevs1957fluctuations,tijms2003first}. The study of SBDs was initiated by Preston in 1975 \cite{preston1975spatial} and, as described, e.g., in \cite{garcia2006spatial}, soon found use in the simulation of spatial point processes, since SBDs can be designed to have specific stationary distributions to which they converge quickly. SBDs have also been used to model the temporal evolution of spatial patterns, including earthquakes epicentres clustering over time \cite{illian2008statistical,ogata1998space}, forest fires pattern evolution, storm centre dynamics \cite[Chapter 15.4]{daley2008introduction} or wireless network behaviour \cite{sankararaman2017spatial}. Because of their applications in the simulation of point processes, theoretical work on SBDs has largely focused on the following two categories: (i) existence and uniqueness of the process and its stationary distribution, characterization as the solution to certain stochastic equations, ergodicity and rate of convergence to the stationary distribution \cite{bezborodov2017maximal,bezborodov2022spatial,garcia2006spatial} and (ii) results for statistical inference \cite{cressie2015statistics,fonseca2017dynamic,gu2021spatio,illian2008statistical,lieshout_theory_2019}. The results of the present article fall within the category of sample path properties of a family of SBDs in the high-density limit (taking $\lbd\to\infty$). There has been some other work in this context, specifically in proving functional central limit theorems for particle counts in families of sets on which the expected number of points has a positive, finite limit (see, e.g., \cite{onaran2023functional} and references therein). Although there has been some work studying large deviations for nonspatial (i.e., $\N$ or $\N^d$-valued) birth and death processes \cite{assaf2017wkb,chan1998large,lazarescu2019large}, and for spatial point processes (see e.g., the next paragraph), to our knowledge, the present work is the first study of a non-trivially spatial rare event for a family of SBDs in the high-density limit.\\

As previously mentioned, the dyanmic Ppp admits the probability measure of a Poisson point process as its unique stationary distribution. Point processes, also called random point fields, are random variables whose realizations are locally finite collections of points \cite{lieshout_theory_2019}.
They usually model temporal or spatial patterns \cite{cressie2015statistics}, such as failure occurrences in computer reliability \cite{asmussen2008asymptotic}, arrival times in a queue \cite{benevs1957fluctuations}, geological patterns \cite{illian2008statistical} or tree distribution in a forest \cite{chiu2013stochastic}.
In homogeneous Poisson point processes, the random variable $G$ of the largest gap between two consecutive points has an analytically known cumulative distribution function \cite[2.3]{fan2000asymptotic}. However, due to its complexity, this formula does not allow to find asymptotic estimations as $\lambda\to \infty$. More generally, applying the strong law of large numbers to $G$ yields \cite[Lemma 6 and Theorem 10]{deheuvels1985erdos} $G\sim \log(\lbd)/\lbd$ a.s.
Large deviations estimates yield bounds on the cumulative distribution function of $G$ \cite{omwonylee2020general}. Under the assumptions $ w_\lambda\gg \log(\lbd)/\lbd$   as $\lambda\to +\infty$ and $\limsup_{\lambda}w_\lambda<1$, these bounds read (Theorem 1.2 with $t=\lambda$, $l=\lambda w_\lambda$ and $L(\lambda) = \lambda G$):
\[\frac{\lambda(1-w_\lambda)}{(1-e^{-1})^2} e^{-\lambda w_\lambda}\lesssim P (G\geq w_\lambda) \lesssim \lambda(1-w_\lambda)e^{-\lambda w_\lambda}.\]
While the focus of our paper is not on studying these bounds, we prove in Lemma \ref{lem:piAw} below that under somewhat weaker assumptions on $w_\lbd$, the upper bound is asymptotically reached. \\

For rare events, i.e., sequences of subsets $(A_m)$ of a state space with stationary probability $\pi(A_m)$ tending to $0$, convergence of the hitting time $\tau(A_m)$, rescaled by its expected value, to an exponential random variable (asymptotic exponentiality) has been studied for ergodic dynamical systems, \cite{abadi2004sharp,pene2020spatio,zweimuller2019hitting} general classes of Markov processes, \cite{cogburn1985distribution, glasserman1995limits, glynn2011exponential, iscoe1994asymptotics,keilson1979rarity}, and specific processes, such as interacting particle systems \cite{asselah1997occurrence} including exclusion \cite{asselah1997sharp} and contact \cite{mountford2016exponential} processes. Since the exponential distribution is memoryless and places no mass at zero, the essential condition for asymptotic exponentiality, from a given sequence $(\sigma_m)$ of initial distributions, is that the system forgets its past state on a shorter time scale than the hitting time of the rare event. This is generally broken up into two conditions: that on the shorter time scale, (i) the distribution of the process converges to $\pi$, or to an appropriate reference measure, such as a quasi-stationary distribution, and (ii) the process does not hit the rare event.
The only apparent exception to (i) is the approach taken in \cite{zweimuller2022hitting}, in which exponentiality is deduced from a distributional relation between the hitting time of $A_m$ with initial distribution $\pi$ and the return time to $A_m$ with initial conditional distribution $\pi(\cdot \mid A_m)$, although ergodicity of the process is still assumed. Condition (ii) is, generally, more or less directly assumed. An additional condition of (iii) compactness/uniform integrability of $\tau(A_m)$ is sometimes needed, when it does not follow from the context, to ensure that $E_{\sigma_m}[\tau(A_m)]$ is the time scale on which the distribution of $\tau(A_m)$ converges. 
Condition (i) can be imposed in multiple ways: bounds on the mixing time \cite{benois2013hitting}, the spectral gap of the generator \cite{iscoe1994asymptotics} or the return time to a specified state \cite{fernandez2015asymptotically}, existence of a \emph{strong metastability time} at which the distribution is quasi-stationary conditioned on not having hit the rare event, or the existence of i.i.d.~or one-dependent regeneration cycles, as defined and shown to exist for Harris recurrent Markov processes in \cite{sigman1990one}, on which the hitting probability of $A_m$ tends to $0$ \cite{cogburn1985distribution,glasserman1995limits,glynn2011exponential}. In this article, we take the regeneration approach, which is well-suited to the dynamic Ppp as the latter has a natural `refresh' mechanism described in Section \ref{sec:renew}. When condition (i) is established through short-time convergence to a reference measure, the memoryless property is typically verified directly by showing the rescaled tail probability function $t\mapsto P(\tau(A_m)/E_{\sigma_m}[\tau(A_m)] > t)$ is asymptotically multiplicative; the martingale approach used in \cite{fernandez2015asymptotically} for this purpose is notable for its elegance. When regeneration cycles are used, the exponential distribution for the first hitting cycle can be obtained as a limit of geometric random variables, e.g., \cite{glynn2011exponential}, and with some additional effort this can be transferred from the first hitting cycle to the first hitting time; this is the approach we take in Proposition \ref{prop:exp}. Most of the literature that we could find on asymptotic exponentiality for general (non-specific) processes concerns a single dynamical system or Markov chain, rather than a sequence or family of processes. The distinction between the two settings is somewhat analogous to that for weak laws of large numbers, which can be proved either for a single sequence, versus a triangular array. Of the three studies \cite{benois2013hitting,fernandez2015asymptotically,fernandez2016conditioned} of asymptotic exponentiality that we could find that pertain to a sequence of processes, all three restricted attention to finite state Markov chains. To the best of our knowledge, the theory contained in Section \ref{sec:theory} of the present article is the first general treatment of asymptotic exponentiality of the hitting time of rare events for a sequence of Markov processes each with infinite state space.\\

 For the purpose of determining what constitutes a shorter time scale than the hitting time of rare events, or for its own sake, in the context of the previous paragraph it is often of interest to characterize the scale of $E_{\sigma_m}[\tau(A_m)]$. When the discrete-time process of visits to $A_m$ has a stationary distribution $\mu_{A_m}$, it is convenient, when possible, to relate $E_{\sigma_m}[\tau(A_m)]$ to the stationary expected return time $E_{\mu_{A_m}}[\tau(A_m)]$. These generally have the same scaling when conditions (i)-(ii) of the previous paragraph hold for the return time to $A_m$ when the initial distribution is $\mu_{A_m}$, i.e. that on a shorter time scale than $E_{\mu_{A_m}}[\tau(A_m)]$, (i) the distribution of the process converges to the reference measure and (ii) there are no early returns to $A_m$, terminology that we borrow from \cite{cogburn1985distribution}. In this article, these conditions are handled in Proposition \ref{prop:tauex}. The usage of $E_{\mu_{A_m}}[\tau(A_m)]$ is convenient because, under mild conditions, it is equal to the reciprocal of the stationary rate of transition from $A_m^c$ into $A_m$, as shown in Proposition \ref{prop:mustat} of this article. This connection is used implicitly in both \cite{fernandez2015asymptotically}, where it relates to the definition of the rates $r_N$, and \cite{zweimuller2022hitting}, in which $\pi(A_m)$ (denoted $\mu(A_l)$ in that reference), which is used implicitly to rescale the hitting and return times, is equal to the stationary frequency of transitions from $A_m^c$ into $A_m$, as it is for any discrete-time process with ergodic stationary distribution $\pi$. Note that, in this article, we stop the process not when it returns to $A_m$ but when it exits $A_m$, i.e., we focus on transitions from $A_m$ to $A_m^c$, since it is more convenient once we need to do calculations for the dynamic Ppp.\\

The article is organized as follows. Section \ref{sec:theory} contains the general theory needed to deduce the asymptotic exponential distribution, and scaling, of the hitting time of rare events. In Section \ref{sec:thry1} we give some basic definitions and results related to regeneration times. In Section \ref{sec:thry2}, the main result is Proposition \ref{prop:exp}, in which we give conditions for the hitting time of a sequence of events $(A_m)$, associated to a sequence of ergodic Markov chains $(X_m)$, equipped with regeneration times at which $X_m$ has distribution $\nu_m$, to be asymptotically exponentially distributed, when the initial distribution is $\nu_m$. In Section \ref{sec:thry3} we give the results needed to characterize $E_{\nu_m}[\tau(A_m)]$, the expected hitting time of $A_m$, from initial distribution $\nu_m$, in terms of the stationary exit rate from $A_m$: in Proposition \ref{prop:mustat} we establish existence of the stationary exit distribution and relate the expected duration of the exit cycle (the trajectory segment from one exit time to the next) to the stationary exit rate, and in Proposition \ref{prop:tauex} we give conditions under which the expected hitting time from initial distribution $\nu_m$ scales like the expected duration of an exit cycle. In Section \ref{sec:renew} we define a `refresh time' for the dynamic Ppp, which we show in Lemma \ref{lem:renprop} is a regeneration time in the sense of Definition \ref{def:renew}, and in Lemma \ref{lem:renest} we establish several estimates for the refresh time. Lemma \ref{lem:countest}, proved in Section \ref{sec:Nest}, provides the estimates for the $\N$-valued counting process associated to the dynamic Ppp that are needed to prove Lemma \ref{lem:renest}. In Section \ref{sec:proof} we give the main steps to the proof of Theorem \ref{thm:main}, which consist mainly in verifying that the sufficient conditions established in Section \ref{sec:theory} hold for the dynamic Ppp; the details are given at the beginning of that section. Section \ref{sec:piest} contains estimates for the stationary distribution of the model, the Poisson point process on $[0,1]$. Section \ref{sec:Nest} contains estimates for the sample paths of the above-mentioned $\N$-valued counting process.

\section{Rare event theory}\label{sec:theory}

In this section, $(S,d)$ is a complete separable metric space and $X=(X_t)_{t\ge 0}$ is a pure jump strong Markov process with transition rate kernel $q(x,dy)$, i.e., such that for $x \in S$ and any Borel set $B\subset S$, for small $h>0$,
\[P(X(t+h) \in B \mid X(t)=x) = h \,q(x,B) + O(h^2).\]
We'll refer to $X$ as a Markov chain on $S$. Use $P_\psi, E_\psi$ for probability and expectation when $X(0)$ has distribution $\psi$.

\subsection{Stopping sequence, Regeneration time}\label{sec:thry1}

We will frequently refer to the increasing sequence of stopping times obtained by iterative application of a stopping time, defined as follows.

\begin{definition}[Stopping sequence, trajectory segments]\label{def:renseq}
Let $\tau$ be a stopping time for a Markov chain $X=(X_t)_{t\ge 0}$, viewing $\tau$ as an $\R_+$-valued function of the trajectory $X(\cdot)$. For $t\in \R_+$ let $\tau(t)$ denote the value of the stopping time for the shifted trajectory $X(t+\cdot)$. The \emph{stopping sequence} $(\tau_n)$ corresponding to $\tau$ is given by $t_0=0$,
\[\tau_n = t_{n-1} + \tau(t_{n-1})\]
and for each $n$, the \emph{$n^{\text{th}}$ trajectory segment $Y_n$ with respect to $(\tau_n)$} is defined by
\[Y_n(t)= X(\tau_{n-1}+t), \ 0<t <\tau_n-\tau_{n-1}.\]
\end{definition}

The stopping sequence is used to prove the following ergodic-theoretic result that applies to ergodic Markov chains with a stopping time $\tau$ that preserves a given distribution $\psi$, in the sense described below. The result is used in the proof of Lemma \ref{lem:prop1cond}. The notation $\ll$ refers to absolute continuity of measures.

\begin{definition}[Time shift operator]\label{def:timeshift}
Let $x(t)$ be a function defined for $t\in \R_+$ and let $t_0>0$. The $t_0$ time shift operator $F_{t_0}$ is defined by
\[(F_{t_0} x)(t) = x(t_0+t).\]
\end{definition}

\begin{lemma}[Excursion principle]\label{lem:excprinc}
Let $(X(t))$ be a Markov chain on $S$ with stationary distribution $\pi$ such that for some $t_0>0$, $P_\pi$ is ergodic with respect to the $t_0$ time shift $X\mapsto F_{t_0}X$. In addition, let $\tau,\psi$ be a stopping time and distribution such that if $X(0)$ has distribution $\psi$ then $\tau>0$ almost surely and $X(\tau)$ has distribution $\psi$. 
Let $T_A(t):=\int_0^t \1(X(s) \in A)ds$ denote the occupation time of $A\subset S$ up to time $t$. If $\psi\ll \pi$ and $E_\psi[\tau]<\infty$ then
\[\pi(A) = \frac{E_\psi[T_A(\tau)]}{E_\psi[\tau]}.\]
\end{lemma}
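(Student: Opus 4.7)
The plan is to compute $\lim_{t\to\infty} T_A(t)/t$ in two independent ways, once by Birkhoff's ergodic theorem and once by the regenerative structure induced by iterating $\tau$, and then equate the two limits. Both limits will be taken $P_\psi$-a.s., so the main preparatory work is to transfer the Birkhoff conclusion, which is natural under $P_\pi$, over to $P_\psi$ using the hypothesis $\psi\ll\pi$.

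For the Birkhoff side, I would apply the discrete-time ergodic theorem to the system $(P_\pi,F_{t_0})$ with test functional $f(X):=\int_0^{t_0}\1(X(s)\in A)\,ds$, which is bounded and hence integrable. Since $\sum_{n=0}^{N-1} f(F_{nt_0}X)=T_A(Nt_0)$ and $E_\pi[f]=t_0\pi(A)$, ergodicity gives $T_A(Nt_0)/(Nt_0)\to\pi(A)$ $P_\pi$-a.s., and monotonicity of $t\mapsto T_A(t)$ upgrades this to $T_A(t)/t\to\pi(A)$ along the continuum $t\to\infty$. To pass from $P_\pi$ to $P_\psi$, I would observe that the disintegration $P_\psi(\cdot)=\int P_x(\cdot)\,d\psi(x)=\int P_x(\cdot)\,(d\psi/d\pi)(x)\,d\pi(x)$ shows $P_\psi\ll P_\pi$ on path space, so the full-measure event $\{T_A(t)/t\to\pi(A)\}$ has full $P_\psi$-measure as well.

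For the regenerative side, I would form the stopping sequence $(\tau_n)$ and trajectory segments $Y_n$ as in Definition 1.2. The strong Markov property together with the assumption $X(\tau)\sim\psi$ whenever $X(0)\sim\psi$ implies by induction on $n$ that, under $P_\psi$, the segments $(Y_n)$ are i.i.d.\ copies of the first segment $(X(t))_{0\le t<\tau}$. Consequently $(\tau_n-\tau_{n-1})$ is i.i.d.\ with mean $E_\psi[\tau]\in(0,\infty)$, and $(T_A(\tau_n)-T_A(\tau_{n-1}))$ is i.i.d.\ with mean $E_\psi[T_A(\tau)]\in[0,E_\psi[\tau]]$. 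The strong law of large numbers yields $\tau_n/n\to E_\psi[\tau]$ and $T_A(\tau_n)/n\to E_\psi[T_A(\tau)]$ $P_\psi$-a.s.; in particular $\tau_n\to\infty$ a.s., so $\tau_n$ is an a.s.\ valid subsequence of $t\to\infty$. Taking the ratio gives $T_A(\tau_n)/\tau_n\to E_\psi[T_A(\tau)]/E_\psi[\tau]$, and comparison with the Birkhoff limit $T_A(t)/t\to\pi(A)$ along this subsequence forces $\pi(A)=E_\psi[T_A(\tau)]/E_\psi[\tau]$. The main subtlety I expect is the absolute continuity step on path space — specifically, being careful that $\psi\ll\pi$ on $S$ really does imply $P_\psi\ll P_\pi$ on the canonical path space so that the $\pi$-a.s.\ ergodic limit holds $P_\psi$-a.s.; everything else is routine strong Markov property plus SLLN bookkeeping.
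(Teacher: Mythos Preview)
Your overall architecture matches the paper's: compute $\lim T_A(t)/t$ two ways, once via Birkhoff under $P_\pi$ transferred to $P_\psi$ by $\psi\ll\pi$, once via the cycle structure induced by iterating $\tau$, and equate. The Birkhoff half is fine and essentially what the paper does.

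The gap is on the regenerative side. You claim that the strong Markov property together with $X(\tau)\sim\psi$ makes the segments $(Y_n)$ i.i.d., but the hypotheses do \emph{not} say that $X(\tau)$ is independent of the first segment $(X(t))_{0\le t<\tau}$; that independence is the content of Definition~\ref{def:renew} (regeneration time), and Lemma~\ref{lem:excprinc} deliberately asks for less. Under the stated hypotheses the sequence $(Y_n)$ is only \emph{stationary} under the shift, not i.i.d., so the SLLN is not available. (Concretely, one of the two places the paper applies this lemma is to the exit time $\tau_\exit$ with $\psi=\mu_A$, where there is no reason for $X(\tau_\exit)$ to be independent of the preceding exit cycle.)

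The repair is exactly what the paper does: apply Birkhoff to the stationary sequence $(Y_n)$ to get
\[
n^{-1}\tau_n\to E_\psi[\tau\mid\I],\qquad n^{-1}T_A(\tau_n)\to E_\psi[T_A(\tau)\mid\I]
\]
with $\I$ the invariant $\sigma$-algebra of the segment shift, hence $T_A(\tau_n)/\tau_n\to E_\psi[T_A(\tau)\mid\I]/E_\psi[\tau\mid\I]$. Equating with your Birkhoff limit gives $E_\psi[T_A(\tau)\mid\I]=\pi(A)\,E_\psi[\tau\mid\I]$ a.s., and then one integrates both sides to recover the desired identity for the unconditional expectations. Your argument is salvageable with this change, but as written the i.i.d.\ claim is not justified.
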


\begin{proof}
 As in Definition \ref{def:renseq} let $(\tau_n)$ be the stopping sequence corresponding to $\tau$ and $(Y_n)$ the corresponding trajectory segments, so that if $X(0)$ has distribution $\psi$ then by the strong Markov property applied to $X$, the sequence $(Y_n)_{n\ge 0}$ is stationary with respect to the shift map defined by $\Sigma((Y_0,Y_1,\dots,)) = (Y_1,Y_2,\dots)$. Let $w_n=\tau_n-\tau_{n-1}$ so that $w_n$ is determined by $Y_n$ for each $n$. The ergodic theorem implies that $P_\psi$-almost surely, as $n\to\infty$
\[n^{-1}\tau_n \to E_\psi[\tau \mid \I], \ \ n^{-1} T_A(\tau_n) \to E_\psi[T_A(\tau) \mid \I],\]
where $\I$ is the invariant $\sigma$-algebra for the shift map. If $E_\psi[\tau]<\infty$ then $E_\psi[\tau \mid \I] \in L^1(\psi)$ so $P_\psi(E_\psi[\tau \mid \I]<\infty)=1$, and since $P_\psi(\tau>0)=1$ by assumption, $P_\psi(E_\psi[\tau \mid \I]>0)=1$. Combining the above limits, $P_\psi$-almost surely, as $n\to\infty$
\begin{align}\label{eq:Aocclim}
\tau_n^{-1} T_A(\tau_n) = (n^{-1}\tau_n)^{-1} n^{-1}T_A(\tau_n) \to \frac{E_\psi[T_A(\tau) \mid \I]}{E_\psi[\tau \mid \I]}
\end{align}
and 
\begin{align}\label{eq:tnlim1}
\frac{t_{n+1}}{t_{n}} = \frac{t_{n+1}}{n+1}\frac{n+1}{n}\frac{n}{t_{n}} \to \frac{E_\psi[\tau \mid \I]}{E_\psi[\tau \mid \I]}=1.
\end{align}
We can strengthen \eqref{eq:Aocclim} to convergence of $t^{-1}T_A(t)$ as $t\to\infty$ as follows: with $n(t):= \max\{n\colon \tau_n\le t\}$, since $t\mapsto T_A(t)$ is non-decreasing
\[\frac{\tau_{n(t)}}{t}\tau_{n(t)}^{-1}T_A(\tau_{n(t)}) \le t^{-1}T_A(t) < \frac{\tau_{n(t)+1}}{t}\tau_{n(t)+1}^{-1}T_A(\tau_{n(t)+1}).\]
Since $\tau_n/n\to E_\psi[\tau]>0$, $n(t)\to\infty$ as $t\to\infty$. Since in addition, $ \tau_{n(t)}/\tau_{n(t)+1}\le \tau_{n(t)}/t \le 1$ and $1 \le \tau_{n(t)+1}/t \le \tau_{n(t)+1}/{\tau_{n(t)}}$ it follows that $\tau_{n(t)}/t \to 1$ and $\tau_{n(t)+1}/t \to 1$ as $n\to\infty$. Using this and \eqref{eq:Aocclim} in the above display, \eqref{eq:Aocclim} holds as $t\to\infty$ with $t$ in place of $\tau_n$.\\

Taking $X(0)$ to have distribution $\pi$, ergodicity of $\pi$ with respect to $F_{t_0}$ implies that
\[n^{-1}T_A(nt_0) \to E_\pi[T_A(t_0)] = \int_0^{t_0} E_\pi[\1(X(t) \in A)]dt = \int_0^{t_0} \pi(A) = t_0\pi(A),\]
so using this and the stronger form of \eqref{eq:Aocclim}, if $\psi \ll \pi$ then $P_\psi$-almost surely
\[\pi(A) = \lim_{n\to\infty}(t_0n)^{-1}T_A(nt_0) = \frac{E_\psi[T_A(\tau) \mid \I]}{E_\psi[\tau \mid \I]}.\]
Thus $E_\psi[T_A(\tau) \mid \I]= \pi(A) E_\psi[\tau \mid \I]$, and integrating
\[E_\psi[T_A(\tau)] = E_\psi[E_\psi[T_A(\tau) \mid \I]] = E_\psi[\pi(A) E_\psi[\tau \mid \I]] = \pi(A) E_\psi[\tau].\]
Solving for $\pi(A)$ gives the result.
\end{proof}

\begin{definition}[Regeneration time]\label{def:renew}
Let $(X(t))$ be a Markov chain on state space $S$. A \emph{regeneration time} and \emph{regeneration measure} are respectively a stopping time $\eta$ and a probability measure $\nu$ on $S$ such that, for any distribution of $X(0)$, $\eta>0$ almost surely, $X(\eta)$ has distribution $\nu$ and $X(\eta)$ is independent of $X(0)$. Letting $(\tau_n)$ denote the stopping sequence corresponding to $\eta$, the \emph{$n^\text{th}$ regeneration cycle} is the $n+1^\text{st}$ trajectory segment with respect to $(\tau_n)$ in the sense of Definition \ref{def:renseq}.
\end{definition}

Note that if $X=(X(t))_{t\ge 0}$ has a regeneration time and regeneration measure then it is one-dependent regenerative in the sense described in \cite{sigman1990one}. The following result is immediate from the definition and the strong Markov property and details the one-dependence property that we will need in what follows.

\begin{lemma}[One-dependence]\label{lem:onedep}
Let $\eta$ be a regeneration time with stopping sequence $(\tau_n)$. If intervals $\{n_i,\dots,m_i\}$ with $n_i<m_i$, $i=1,\dots,k$ are \emph{separated}, i.e., $m_i<n_{i+1}$ for $i=1,\dots,k-1$, then $Z_1,\dots,Z_k$ defined by $Z_i(t)=X(\tau_{n_i}+t)$, $0\le t < \tau_{m_i}-\tau_{n_i}$ are independent. Moreover, if $n_1\ge 1$ and for some $m$, $m_i=m$ for all $i$, then $Z_1,\dots,Z_k$ are identically distributed.
\end{lemma}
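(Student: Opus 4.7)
The plan is to leverage the defining property of a regeneration time together with the strong Markov property to show that after each stopping time $\tau_n$ with $n \ge 1$, the future trajectory is an independent copy of a chain started from $\nu$. Once this is established, both independence and identical distribution of the $Z_i$ follow in a straightforward way.

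First I would prove by induction on $n$ the claim: for each $n \ge 1$, the trajectory $(X(\tau_n + t))_{t \ge 0}$ is independent of $X|_{[0,\tau_n]}$ and has the law of the chain started from $\nu$. The base case $n = 1$ combines the regeneration property (by Definition \ref{def:renew}, $X(\eta)$ has distribution $\nu$ and is independent of $X(0)$ for any initial distribution of $X(0)$) with the strong Markov property at $\tau_1 = \eta$. For the induction step, apply the strong Markov property at $\tau_n$ and then the regeneration property to the shifted process $(X(\tau_n + t))_{t\ge 0}$, whose first regeneration time is precisely $\tau_{n+1} - \tau_n$. The key observation is that the regeneration property is shift-invariant because it holds regardless of initial distribution, so in particular applies to the conditional distribution of $X(\tau_n)$ given the past; this is the only real technical point and it is minor.

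Second, for separated intervals $\{n_i,\dots,m_i\}$, observe that $Z_i$ is a measurable functional of $X|_{[\tau_{n_i},\tau_{m_i}]}$. For $i<j$, separation gives $n_j \ge m_i+1 \ge 1$, so by the first step the post-$\tau_{n_j}$ trajectory, which determines $Z_j$, is independent of $X|_{[0,\tau_{n_j}]}$ and in particular of $Z_i$. Iterating this pairwise argument (or equivalently, conditioning successively on $Z_1,\dots,Z_{j-1}$, each of which is determined by $X|_{[0,\tau_{n_j}]}$) yields mutual independence of $Z_1,\dots,Z_k$. For the identical distribution statement, when $n_1 \ge 1$ we have every $n_i \ge 1$ (the indices are strictly increasing), so by the first step $X(\tau_{n_i}) \sim \nu$ for each $i$, and if the intervals all contain the same number of regeneration cycles then each $Z_i$ is the same measurable functional of an independent realization of the chain started from $\nu$, hence the $Z_i$ are identically distributed.
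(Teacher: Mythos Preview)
Your approach is exactly what the paper has in mind (it offers no proof beyond declaring the result ``immediate from the definition and the strong Markov property''), and your second and third paragraphs carry the argument correctly. There is, however, a genuine overstatement in your Step~1 that is worth flagging, because it is precisely the reason the lemma is called \emph{one}-dependence.

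Definition~\ref{def:renew} only asserts that $X(\eta)$ has law $\nu$ and is independent of $X(0)$, for any initial distribution. Combined with the strong Markov property at $\tau_{n-1}$, this gives that $X(\tau_n)\sim\nu$ is independent of $X(\tau_{n-1})$, and hence (via strong Markov) that $(X(\tau_n+t))_{t\ge 0}$ is independent of $X|_{[0,\tau_{n-1}]}$. It does \emph{not} give independence from $X|_{[0,\tau_n]}$: nothing in the definition rules out dependence between $X(\tau_n)$ and the path segment $X|_{[\tau_{n-1},\tau_n]}$, so consecutive regeneration cycles may be dependent. Your inductive claim in Step~1 should therefore read: for $n\ge 1$, the process $(X(\tau_n+t))_{t\ge 0}$ has the law of the chain started from $\nu$ and is independent of $X|_{[0,\tau_{n-1}]}$.

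Fortunately this weaker statement is all you need in Step~2. Separation gives $n_j\ge m_{j-1}+1$, so $n_j-1\ge m_{j-1}$, and $Z_1,\dots,Z_{j-1}$ are measurable with respect to $X|_{[0,\tau_{m_{j-1}}]}\subset X|_{[0,\tau_{n_j-1}]}$. Thus the corrected Step~1, applied at $n=n_j$, already yields that $Z_j$ is independent of $(Z_1,\dots,Z_{j-1})$, and your argument goes through unchanged from there.
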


\subsection{Exponential limit distribution}\label{sec:thry2}

The following result gives sufficient conditions for a sequence of events corresponding to a sequence of Markov chains, each possessing a regeneration time, to have an asymptotically exponential distribution. It is the main result of this subsection.

\begin{proposition}\label{prop:exp}
Let $(X_m)_{m\in \N}$ be a sequence of Markov chains, each $X_m=(X_m(t))_{t\ge 0}$ taking values in $S_m$, and for each $m$ let $\eta_m$, $\nu_m$ be a regeneration time and regeneration measure for $X_m$, with $(t_{m,n})_n$ denoting the corresponding stopping sequence.\\

Let $A_m\subset S_m$, let $p_m = P_{\nu_m}(X_m(t) \in A_m \ \text{for some} \ t < \eta_m)$ and let $N_m=\inf\{n\colon \tau(A_m) < t_{m,n}\}$. For $\delta>0$ let $n_m(\delta)=\inf\{n\colon P_{\nu_m}(N_m \le n )\ge \delta\}$. Let $X_m(0)$ have distribution $\nu_m$ for each $m$ and suppose that
\enumrom
\item $p_m\to 0$ as $m\to\infty$, and
\enumend
if $\dlt_m\to 0$ slowly enough as $m\to\infty$ then 
\enumrom
\item[(ii)] $(t_{m,n_m(\delta_m)} / E_{\nu_m}[t_{m,n_m(\delta_m)}])_m$ is uniformly integrable, and
\item[(iii)] $E_{\nu_m}[t_{m,n_m(\delta_m)} \mid N_m \le n_m(\delta_m)] = O(E_{\nu_m}[t_{m,n_m(\delta_m)}])$.
\enumend
Then
\[P_{\nu_m}(\tau(A_m)/E_{\nu_m}[\tau(A_m)]>t) \to e^{-t} \ \ \text{as} \ \ m\to\infty.\]

\end{proposition}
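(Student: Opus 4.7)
The plan is to use a block decomposition of the regeneration cycles to express $\tau(A_m)$, suitably rescaled, as a function of a geometric random variable whose limit is exponential.

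First, fix $\delta_m \to 0$ slowly enough that (ii) and (iii) hold, and set $M_m := n_m(\delta_m)$, $T_m := E_{\nu_m}[t_{m,M_m}]$, and $\tilde\delta_m := P_{\nu_m}(N_m \le M_m)$; from the definition of $n_m(\delta_m)$ and the bound $P_{\nu_m}(N_m = M_m) \le p_m$, one has $\delta_m \le \tilde\delta_m \le \delta_m + p_m$, which together with (i) gives $\tilde\delta_m \to 0$ and $M_m \to \infty$. Group regeneration cycles into blocks of size $L_m := M_m + 1$, the last cycle of each block serving as a buffer: define the $k^{\text{th}}$ block as the trajectory segment $Z_k := (X_m(t_{m,(k-1)L_m} + s))_{0 \le s < t_{m,kL_m - 1} - t_{m,(k-1)L_m}}$, containing the first $M_m$ cycles of the block. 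The index intervals of $Z_k$ and $Z_{k+1}$ are separated by the buffer cycle $kL_m$, so Lemma \ref{lem:onedep} implies that the $Z_k$ are mutually independent, each distributed as the trajectory of $X_m$ started from $\nu_m$ and run for $M_m$ cycles. Setting $\hat B_k := \{Z_k \text{ hits } A_m\}$ and $K_m := \min\{k : \hat B_k\}$, the $(\hat B_k)$ form an i.i.d.~Bernoulli$(\tilde\delta_m)$ sequence, so $K_m$ is geometric$(\tilde\delta_m)$ and $\tilde\delta_m K_m \to \text{Exp}(1)$ in distribution.

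Next, decompose $\tau(A_m) = \sum_{k=1}^{K_m - 1} Y_k + R_m$, where $Y_k := t_{m,kL_m} - t_{m,(k-1)L_m}$ is the full length of block $k$ (including its buffer) and $R_m \in [0, t_{m,K_m L_m - 1} - t_{m,(K_m - 1)L_m}]$ is the time elapsed within block $K_m$ before the hit. Cycle lengths being identically distributed copies of $\eta_m$ with mean $T_m/M_m$, $E[Y_k] = L_m E_{\nu_m}[\eta_m] = T_m(1 + 1/M_m) \sim T_m$. Condition (ii) gives uniform integrability of $Y_k/T_m$, so a weak law of large numbers for one-dependent uniformly integrable sequences, together with $K_m \to \infty$ in probability, yields $\sum_{k=1}^{K_m - 1} Y_k / ((K_m - 1) T_m) \to 1$ in probability. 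Conditional on $\hat B_{K_m}$, the remainder $R_m$ is distributed as $\tau(A_m)$ under $\nu_m$ conditioned on $\{N_m \le M_m\}$, so $E[R_m] \le E_{\nu_m}[t_{m,M_m} \mid N_m \le M_m] = O(T_m)$ by (iii), which gives $\tilde\delta_m R_m / T_m \to 0$ in probability. Combining, $\tilde\delta_m \tau(A_m)/T_m \to \text{Exp}(1)$ in distribution.

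An analogous expectation-level calculation, using the independence of $Z_k$ from $(Z_1, \ldots, Z_{k-1})$ for a Wald-type identity on $E[\sum_{k=1}^{K_m - 1} Y_k]$ and condition (iii) for the remainder, yields $E_{\nu_m}[\tau(A_m)] \sim T_m/\tilde\delta_m$. Slutsky's theorem then delivers $\tau(A_m)/E_{\nu_m}[\tau(A_m)] \to \text{Exp}(1)$ in distribution, which is the desired conclusion. The principal obstacle is the coupling between $K_m$ and the block length $Y_{K_m}$: although the blocks $Z_k$ are mutually independent, the event $\{K_m = k\}$ conditions on block $k$ containing a hit, which distorts the distribution of $Y_k$, so the hitting block need not be typical. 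Condition (iii) is precisely what controls this effect at order $T_m$, while condition (ii) (uniform integrability) is what promotes the in-probability concentration of $\sum Y_k$ to the expectation estimate required for the Slutsky step.
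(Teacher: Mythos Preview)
Your approach is the same as the paper's: block regeneration cycles in groups of $M_m+1$ with the last serving as a buffer, observe that the index $K_m$ of the first hitting block is geometric and that $\tilde\delta_m K_m\to\mathrm{Exp}(1)$, apply a law of large numbers to the preceding block lengths, and control the hitting block via (iii). Two points in your sketch deserve more care, and the paper handles both explicitly.

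First, the decomposition $\tau(A_m)=\sum_{k<K_m}Y_k+R_m$ with $R_m\ge 0$ is not literally valid: $A_m$ may be hit during a buffer cycle $kL_m$ for some $k<K_m$, in which case $\tau(A_m)<\sum_{k<K_m}Y_k$. The paper fixes this by showing $P(\tau(A_m)\ge t_{m,(K_m-1)L_m})\to 1$ via a union bound over buffer cycles and $p_m\to 0$; you should do the same.

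Second, your Wald-type identity for $E[\sum_{k<K_m}Y_k]$ is delicate because $Y_k$ and $\{K_m>k\}$ are \emph{not} independent: $\{K_m>k\}$ is determined by the non-buffer cycles of blocks $1,\dots,k$, which are part of $Y_k$. The paper avoids this by splitting $Y_k$ into an i.i.d.\ part $w_{m,k}$ (the non-buffer cycles, separated from one another by buffers and hence independent via Lemma~\ref{lem:onedep}) and the single-cycle buffer, proving $L^1$ convergence for each piece separately. Your proposed WLLN for one-dependent sequences and Wald step would need to treat the conditioning $\{\hat B_k^c\}$ on the length of block $k$, which is exactly where conditions (ii)--(iii) enter; the paper does this through Lemma~\ref{lem:gut} and Lemma~\ref{lem:UIcond}.
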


\begin{figure}[!ht]\label{fig:prop_1}
	\centering
        \includegraphics[width=0.85\textwidth]{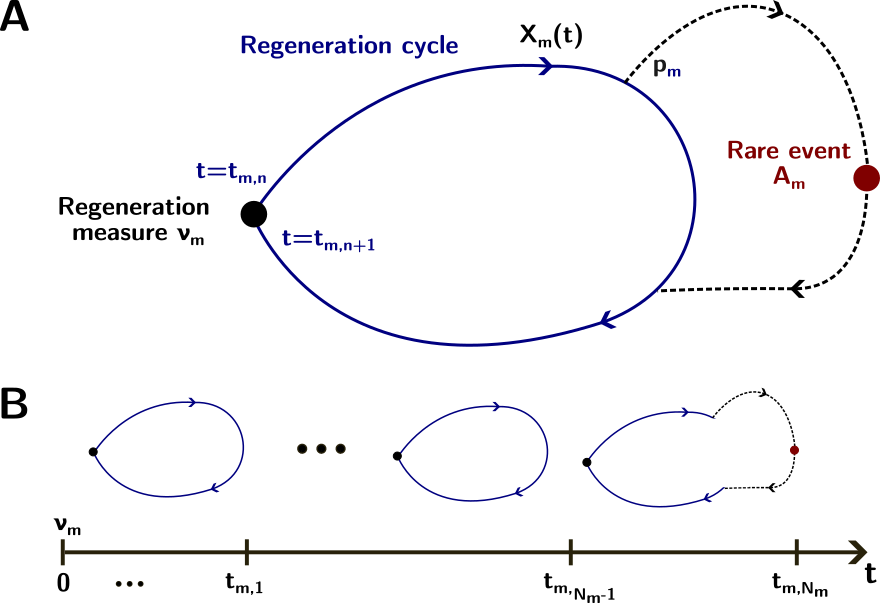}
    
	\caption{Schematic representation of the setup of Proposition \ref{prop:exp}. (A) We suppose that we can define regeneration cycles which are one-dependent. During each cycle, the process can hit $A_m$ with probability $p_m\to0$ \textit{(i)}. (B) $N_m$ counts the number of cycles before the process hits $A_m$.}
\end{figure}

We will need two more results in order to prove Proposition \ref{prop:exp}: an estimate that is used in another article proving a weak law of large numbers, and a simple result concerning uniform integrability.

\begin{lemma}[Estimate from the proof of the Theorem, Section 3, \cite{gutmart}, $p=1$ case.]\label{lem:gut} Let $(y_i)_{i=1}^k$ be a collection of $L^1$ random variables with natural filtration $(\F_i)$ and let $\mu_i=E[y_i \mid \F_{i-1}]$. Then there is $C>0$ so that for all $a>0$,
\[E\left[|\sum_{i=1}^k (y_i-\mu_i)|\right] \le C k^{1/2}a + C E\left[\sum_{i=1}^k |y_i-\mu_i|\1(|y_i-\mu_i|\ge a)\right]. \]

\end{lemma}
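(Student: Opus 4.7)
The plan is to decompose the martingale differences $d_i := y_i-\mu_i$ into a bounded part and a tail part, both recentered to remain martingale differences, and then control each part by a different method (orthogonality for the bounded part, triangle inequality for the tail part).

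Concretely, fix $a>0$ and set
\[
u_i = d_i\1(|d_i|<a) - E[d_i\1(|d_i|<a)\mid \F_{i-1}], \qquad v_i = d_i\1(|d_i|\ge a) - E[d_i\1(|d_i|\ge a)\mid\F_{i-1}].
\]
Since $E[d_i\mid \F_{i-1}]=0$, the two conditional expectations are negatives of each other, so $u_i+v_i = d_i$. By construction $(u_i)$ and $(v_i)$ are each martingale difference sequences with respect to $(\F_i)$. For the bounded part, $|u_i|\le 2a$, so martingale orthogonality gives
\[
E\Bigl[\bigl(\sum_{i=1}^k u_i\bigr)^2\Bigr] = \sum_{i=1}^k E[u_i^2]\le 4ka^2,
\]
and Cauchy--Schwarz then yields $E[|\sum_i u_i|]\le 2a\sqrt{k}$. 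For the tail part, the triangle inequality and the conditional Jensen/tower property give $E|v_i|\le 2E[|d_i|\1(|d_i|\ge a)]$, hence
\[
E\Bigl[\bigl|\sum_{i=1}^k v_i\bigr|\Bigr] \le \sum_{i=1}^k E|v_i| \le 2\, E\Bigl[\sum_{i=1}^k |d_i|\1(|d_i|\ge a)\Bigr].
\]
Adding the two bounds and noting $d_i = y_i-\mu_i$ produces the desired inequality with $C=2$.

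There is no real obstacle here; the only subtlety is being careful that the recentering in the definitions of $u_i,v_i$ is necessary (so both become martingale differences and orthogonality/$L^2$ methods apply to $(u_i)$) while still summing to $d_i$, which uses the mean-zero property $E[d_i\mid \F_{i-1}]=0$ in an essential way. The $L^1$ requirement on the $y_i$ ensures all truncations and conditional expectations are well-defined, and the final constant $C$ depends only on these triangle-inequality factors of $2$, not on $k$ or $a$, as required.
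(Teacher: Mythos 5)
Your proof is correct. The paper does not actually prove this lemma --- it is stated as a citation to the estimate in Gut's proof --- but your argument (split $d_i=y_i-\mu_i$ into a recentered truncated part bounded by $2a$, controlled in $L^2$ via martingale orthogonality and Cauchy--Schwarz, plus a recentered tail part controlled in $L^1$ by the triangle inequality and conditional Jensen) is exactly the standard truncation argument underlying the cited result, and it yields the inequality with $C=2$.
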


\begin{lemma}\label{lem:UIcond}
    Let $(U_m)$ be uniformly integrable and let $(V_m)$ have the distribution of $U_m$ conditioned on events $A_m$ satisfying $\liminf_m P(A_m)>0$. Then for some $m_0$, $(V_m)_{m\ge m_0}$ are uniformly integrable.
\end{lemma}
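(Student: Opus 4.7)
The plan is to translate the conditioning into a pointwise inequality of the form $E[|V_m|\1_{|V_m|>K}] \le E[|U_m|\1_{|U_m|>K}]/P(A_m)$, and then exploit the positive lower bound on $P(A_m)$ together with the assumed uniform integrability of $(U_m)$.

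Concretely, I would first recall that $V_m$ having the distribution of $U_m$ conditioned on $A_m$ means that for any nonnegative measurable function $f$,
\[
E[f(V_m)] \;=\; \frac{E[f(U_m)\1_{A_m}]}{P(A_m)}.
\]
Applied to $f(u) = |u|\1_{|u|>K}$, this gives
\[
E\bigl[|V_m|\1_{|V_m|>K}\bigr] \;=\; \frac{E\bigl[|U_m|\1_{|U_m|>K}\1_{A_m}\bigr]}{P(A_m)} \;\le\; \frac{E\bigl[|U_m|\1_{|U_m|>K}\bigr]}{P(A_m)}.
\]
Next, by the hypothesis $\liminf_m P(A_m) > 0$, I would pick $c>0$ and $m_0$ so that $P(A_m) \ge c$ for all $m\ge m_0$. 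Taking the supremum over $m\ge m_0$ and using the bound above,
\[
\sup_{m\ge m_0} E\bigl[|V_m|\1_{|V_m|>K}\bigr] \;\le\; c^{-1}\sup_{m} E\bigl[|U_m|\1_{|U_m|>K}\bigr].
\]
The uniform integrability of $(U_m)$ then yields that the right-hand side tends to $0$ as $K\to\infty$, which is exactly the defining property of uniform integrability of $(V_m)_{m\ge m_0}$.

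There is no real obstacle here: the argument is a one-line bound followed by taking $K\to\infty$. The only point to watch is the need to restrict to $m\ge m_0$ (rather than all $m$), which is unavoidable since for small $m$ the probability $P(A_m)$ could in principle be zero or arbitrarily small, making $V_m$ either undefined or with an arbitrarily inflated tail.
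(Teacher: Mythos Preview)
Your proof is correct and follows essentially the same approach as the paper: both establish the pointwise bound $E[|V_m|\1_{|V_m|>K}] \le E[|U_m|\1_{|U_m|>K}]/P(A_m)$, choose $m_0$ so that $P(A_m)$ is bounded below by a positive constant for $m\ge m_0$, and then let $K\to\infty$ using the uniform integrability of $(U_m)$.
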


\begin{proof}
Let $\ep,m_0$ be such that $P(A_m)\ge \epsilon>0$ for all $m\ge m_0$. For each $a$ and every $m\ge m_0$,
\[E[\, |V_m|>a \,\1(|V_m|>a)\, ]\le \dfrac{ E[\, |U_m|>a \, \1(|U_m|>a)]}{\ep}.\]
By assumption, $\sup_{m\ge m_0} E[\, |U_m|>a\,  \1(|U_m|>a)]\to 0$ as $a\to\infty$, and from the above inequality the same is true of $\sup_{m\ge m_0}E[\, |V_m|>a \1(|V_m|>a)]$.
\end{proof}

We can now prove Proposition \ref{prop:exp}.

\begin{proof}[Proof of Proposition \ref{prop:exp}]
We begin with an outline. The idea is to break up time into intervals of sufficiently many regeneration cycles that the probability of hitting $A_m$ on each interval is approximately $\delta>0$, then to check on each interval whether the process hits $A_m$. Between intervals we leave a gap of one regeneration cycle on which we do not check for hitting $A_m$. By the one-dependence property described in Lemma \ref{lem:onedep}, the trajectory segments on successive intervals are independent, while the probability of hitting $A_m$ in any particular gap (and thus of not noticing that we hit it) is $p_m$ which $\to 0$ by (i). The index of the first interval on which $A_m$ is hit is geometrically distributed, while the durations of preceding intervals are conditionally i.i.d. Since $p_m =o(1)$, intervals are made up of large numbers of regeneration cycles, so the duration of the gaps, which each comprise a single regeneration cycle, is small by comparison. Letting $\dlt$ tend to $0$ slowly as $m\to\infty$,
\itemgo[noitemsep]
\item the geometric distribution, scaled by $\dlt$, converges to exponential,
\item a law of large numbers can, with the help of condition (ii), be applied to interval durations preceding the hitting interval, and
\item since the index of the first hitting interval diverges as $\dlt \to 0$, the duration of this interval is, by (iii), negligible compared to the sum of the preceding intervals' durations.
\itemend
This will show that the hitting time, scaled in some way, converges in distribution to exponential. With $L^1$ convergence of the relevant quantities, we can show the scaling is by the expected value, as desired, completing the proof.\\

Let $\alpha_{m,n}=\1(X_m(t) \in A_m \ \text{for some} \ t\in [t_{m,n-1},t_{m,n}))$, then by definition of regeneration time and the strong Markov property $P_{\nu_m}(\alpha_{m,n}=1) = p_m$ for all $n,m$. A union bound implies $P_{\nu_m}(N_m \le n+1) \le P_{\nu_m}(N_m\le n)+P_{\nu_m}(\alpha_{m,n+1}=1)$, so in particular $q_m(\dlt):=P_{\nu_m}(N_m \le n_m(\dlt))$ has $\dlt \le q_m(\dlt) \le \dlt+p_m$. Let $n_m'(\dlt)=n_m(\dlt)+1$, let $I_{m,i}(\dlt)=\{(i-1)n_m'(\dlt)+1,\dots,i n_m'(\dlt)-1)\}$ and let $a_{m,i}(\delta)=\1( \alpha_{m,n}=1 \ \text{for some} \ n \in \{(i-1) n_m'(\delta)+1,\dots, in_m'(\delta)-1\})$, then by definition of regeneration time and the strong Markov property, $P_{\nu_m}(a_{m,i}(\dlt)=1)=q_m(\dlt)$ for each $i$. Define $Y_{m,n}$ on $[0,t_{m,n}-t_{m,n-1})$ by $Y_{m,n}(t)=X_m(t_{m,n-1}+t)$. Since $a_{m,i}(\dlt)$ is determined by $(Y_{m,n}\colon n\in I_{m,i}(\dlt))$, by Lemma \ref{lem:onedep} and the definition of $n_m(\dlt)$ and $p_m$, $(a_{m,i}(\dlt))_i$ are independent, so 
\[k_m(\delta):=\inf\{i\colon a_{m,i}(\delta)=1\}\]
has distribution geometric$(q_m(\dlt))$. If $\delta_m\to 0$ slowly as $m\to\infty$ then $\delta_m k_m(\delta_m)$ converges in distribution to exponential with mean $1$. For each $\delta>0$, $N_m \le k_m(\delta)n_m'(\delta)-1$ almost surely, moreover, for fixed $k$ \[P_{\nu_m}(N_m\le (k_m(\dlt)\wedge k -1)n_m'(\dlt)) \le P_{\nu_m}(N_m = jn_m'(\dlt) \ \text{for some} \ j<k) \le (k-1)p_m \to 0 \text{ as } m\to\infty\] while $P_{\nu_m}(k_m(\dlt)>k)\to 0$ uniformly in $m$ as $k\to\infty$. Taking $k\to \infty$ slowly as $m\to\infty$, $P_{\nu_m}(N_m > (k_m(\delta)-1)n_m'(\delta) ) \to 1$ as $m\to\infty$ and correspondingly
\begin{align}\label{eq:tAmsqz}
&\tau(A_m) < t_{m,k_m(\dlt)n_m'(\dlt)-1} \ \ \text{almost surely and} \nonumber \\
&P(\tau(A_m)\ge t_{m,(k_m(\dlt)-1)n_m'(\dlt)})\to 1 \ \ \text{as} \ \ m\to\infty. 
\end{align}

Define
\begin{align*}
w_{m,i}(\dlt) &= t_{m,in_m'(\delta)-1} - t_{m,(i-1)n_m'(\delta)}, \ \ v_{m,i}(\dlt) = t_{m,in_m'(\delta)} - t_{m,(i-1)n_m'(\delta)-1}, \\
t_{m,k}'(\dlt) &=\sum_{i=1}^k w_{m,i}(\dlt) \ \ \text{and} \ \ s_{m,k}(\dlt) = \sum_{i=1}^k v_{m,k}(\dlt),
\end{align*}
then in the same way as for $(a_{m,i}(\dlt))_i$, by Lemma \ref{lem:onedep} it follows that $(w_{m,i}(\dlt))_i$ are i.i.d. Notice that
\begin{align}\label{eq:t=t'+s}
t_{m,k n_m'(\dlt)} = t_{m,k}'(\dlt)+s_{m,k}(\dlt)
\end{align}
and for $\bullet \in \{\le,\,>\}$ let
\[\mu_m(\dlt) = E[w_{m,1}(\dlt)], \ \mu_m^\bullet(\dlt) = E[w_{m,1}(\dlt) \mid N_m \bullet n_m(\dlt)].\]
To obtain the exponential limit for $\tau(A_m)/E[\tau(A_m)]$ we will show that if $X_m(0)$ has distribution $\nu_m$ and $\dlt_m\to 0$ slowly as $m\to\infty$ then 
\begin{align}\label{eq:propexpsuffcond}
\frac{\dlt_m w_{m,k_m(\dlt_m)}(\dlt_m)}{\mu_m^\le(\dlt_m)} , \ \ \frac{\dlt_m s_{m,k_m(\dlt_m)}(\dlt_m)}{\mu_m(\dlt_m)}  \ \ \text{and} \ \  \frac{ \dlt_m t_{m,k_m(\dlt_m)-1}'(\dlt_m)}{\mu_m^>(\dlt_m)}  - \dlt_m k_m(\dlt_m)\to 0 \ \ 
\text{in $L^1$}.
\end{align}
First we show how \eqref{eq:propexpsuffcond} implies the conclusion, then we prove \eqref{eq:propexpsuffcond}. If $\dlt_m \to 0$ slowly then by assumption (iii) $\mu_m^\le(\dlt_m) = O(\mu_m(\dlt_m))$, and this implies $\mu_m^>(\dlt_m)\sim \mu_m(\dlt_m)$. To see the latter, first we condition:
\[\mu_m(\dlt_m) = \mu_m^>(\dlt_m)(1-q_m(\dlt_m)) + \mu_m^\le(\dlt_m)q_m(\dlt_m),\]
then using $\mu_m^\le(\dlt_m) = O(\mu_m(\dlt_m))$,
\[|\mu_m(\dlt_m) - \mu_m^>(\dlt_m) | \le q_m(\dlt_m)(\mu_m^>(\dlt_m) + O(\mu_m(\dlt_m)))\]
and since $q_m(\dlt_m)\le \dlt_m+p_m=o(1)$, the claim follows. From these observations, $\mu_m^\le(\dlt_m)$ and $\mu_m(\dlt_m)$ can be replaced with $\mu_m^>(\dlt_m)$ in \eqref{eq:propexpsuffcond}, then from \eqref{eq:t=t'+s}, $t_{m,k_m(\dlt_m)-1}(\dlt_m)$ can be replaced with $t_{m,(k-1)n_m'(\dlt)}$ in the third expression in \eqref{eq:propexpsuffcond}. Equation \eqref{eq:tAmsqz} implies that
\[P(0 \le \tau(A_m) - t_{m,(k_m(\dlt_m)-1)n_m'(\dlt)}  \le w_{m,k_m(\dlt_m)} )\to 1,\]
and combining with the adjusted form of the first and third expressions in \eqref{eq:propexpsuffcond} it follows that
\[\dlt_m \tau(A_m)/\mu_m^\le(\dlt_m)  - \dlt_m k_m(\dlt_m) \to 0 \ \ \text{in probability},\]
and from the convergence in distribution of $\dlt_m k_m(\dlt_m)$ to exponential$(1)$,
\begin{align}\label{eq:tAexp1}
P(\dlt_m\tau(A_m)/\mu_m^\le(\dlt_m) > t) \to e^{-t} \ \ \text{as} \ \ m\to\infty.
\end{align}
Now, $\dlt_mk_m(\dlt_m)$ has a uniform exponential tail: $P(\dlt_m k_m(\dlt_m) > a =\dlt_m k ) \le (1-(\dlt_m-p_m))^{a/\dlt_m} \le e^{-a(1-p_m/\dlt_m)}$
which is at most $e^{-a/2}$ if $\dlt_m\to 0$ slowly enough; in particular, $\dlt_m k_m(\dlt_m)$ is uniformly integrable (UI). Since
\[t_{m,k_m(\dlt)n_m'(\dlt)-1} = t_{m,k_m(\dlt)-1}'(\dlt_m) + w_{m,k_m(\dlt_m)}(\dlt_m) + s_{m,k_m(\dlt)-1}(\dlt_m),\]
combining this with the adjusted form of \eqref{eq:propexpsuffcond} it follows that $\dlt_m t_{m,k_m(\dlt)n_m'(\dlt)-1}/\mu_m^\le(\dlt_m)$ is UI, then since $P(\tau(A_m) < t_{m,k_m(\dlt_m)n_m'(\dlt_m)-1})=1$ it follows that $\dlt_m \tau(A_m)/\mu_m^\le(\dlt_m)$ is UI. In turn, 
\[\frac{\dlt_m \tau(A_m)}{\mu_m^\le(\dlt_m)}  - \dlt_m k_m(\dlt_m)\]
is UI, and since it converges to $0$ in probability, it converges to $0$ in $L^1$. Since $\dlt_m k_m(\dlt_m)$ is UI and converges in distribution to exponential$(1)$, $E[\dlt_m k_m(\dlt_m)] \to 1$ and it follows that
\[\frac{\dlt_m}{\mu_m^\le(\dlt_m)} E[\tau(A_m)] \to 1,\]
i.e., $E[\tau(A_m)] \sim \mu_m^\le(\dlt_m)/\dlt_m$. Combining with \eqref{eq:tAexp1} then completes the proof. It remains to prove \eqref{eq:propexpsuffcond}; we begin with the first two statements, which are the easiest. For the first statement, from the i.i.d.~property of $(w_{m,i}(\dlt))_i$ it follows that
\[E_{\nu_m}[w_{m,k_m(\dlt_m)}(\dlt_m) \mid k_m(\dlt_m)] = \mu_m^\le (\dlt_m),\]
so the same holds without conditioning and since $\dlt_m\to 0$ the first statement in \eqref{eq:propexpsuffcond} follows. For the second statement, write
\[E_{\nu_m}[s_{m,k_m(\dlt_m)-1}(\dlt_m)] = \sum_{i\ge 1} E_{\nu_m}[v_{m,i}(\dlt_m) \mid k_m(\dlt_m)>i]P_{\nu_m}(k_m(\dlt_m)>i).\]
With $\phi_m=E_{\nu_m}[\eta_m]$, by definition of regeneration time and the strong Markov property, $E_{\nu_m}[t_{m,n}-t_{m,n-1}]=1$ for all $n$, so $E_{\nu_m}[v_{m,i}(\dlt)] = \phi_m$ for all $m,i,\dlt$ and $\mu_m(\dlt_m) = n_m(\dlt_m)\phi_m$ for all $m,\dlt$. Since $v_{m,i}(\dlt_m)$ is determined by $Y_{m,n_m'(\dlt_m)}$ and since $\{k_m(\dlt_m)>i\}=\{a_{m,j}(\dlt_m)=0 \ \forall j\le i\}$ with each $a_{m,j}(\dlt_m)$ determined by $(Y_{m,n}(\dlt_m)\colon n \in I_{m,j}(\dlt_m))$, using Lemma \ref{lem:onedep} with separated intervals $I_{m,1},\dots,I_{m,i-1}, I_{m,i}\cup \{n_m'(\dlt_m)\}$,
\begin{align*}
E_{\nu_m}[v_{m,i}(\dlt_m) \mid k_m(\dlt_m)>i] &= E_{\nu_m}[v_{m,i}(\dlt_m) \mid a_{m,i}(\dlt_m)=0] \\
&\le \frac{E_{\nu_m}[v_{m,i}(\dlt_m)]}{P_{\nu_m}(a_{m,i}(\dlt_m)=0)} = \frac{\phi_m}{1-q_m(\dlt_m)}.
\end{align*}
If $\dlt_m\to 0$ then $q_m(\dlt_m)\to 0$ and combining the last two displays,
\[E_{\nu_m}[s_{m,k_m(\dlt_m)-1}(\dlt_m)] \sim \phi_m\sum_{i\ge 1} P_{\nu_m}(k_m(\dlt_m)>i) = \phi_m E_{\nu_m}[k_m(\dlt_m)-1].\]
From a union bound, $q_m(\dlt) \le p_m n_m(\dlt)$ for any $m,\dlt$ and since $q_m(\dlt)\ge \dlt$, $n_m(\dlt) \ge \dlt/p_m$ so if $\dlt_m\to 0$ slowly then $n_m(\dlt)\to\infty$. From $\mu_m(\dlt_m)=n_m(\dlt_m)\phi_m$ it follows that $\phi_m= o(\mu_m(\dlt_m))$. Since $k_m(\dlt)$ is geometric$(q_m(\dlt))$ and $q_m(\dlt)\ge \dlt$, $(E_{\nu_m}[\dlt_mk_m(\dlt_m)])_m$ is bounded. Multiplying the above display by $\dlt_m/\mu_m(\dlt_m)$ and using these observations gives the second statement in \eqref{eq:propexpsuffcond}.\\

It remains to prove the third statement in \eqref{eq:propexpsuffcond}. Let $k_m'(\dlt_m):= k_m(\dlt_m)-1$, then conditioned on $k_m'(\dlt_m)$, $y_{m,i} := w_{m,i}(\dlt_m)/\mu_m^>(\dlt_m), \ i = 1,\dots,k_m'(\dlt_m)$ are i.i.d.~with the distribution of $w_{m,1}(\dlt_m)/\mu_m^>(\dlt_m)$ conditioned on $N_m\ge n_m(\dlt_m)$. Let $\tilde E[\cdots] = E_{\nu_m}[\cdots \mid k_m'(\dlt_m)=k_m]$. Condition on $k_m'(\dlt_m)=k_m$ and for each $m$ use the formula from Lemma \ref{lem:gut} on $(y_{m,i})_{i=1}^{k_m}$; since $(y_{m,i})$ are i.i.d.~with mean $1$, in the notation of Lemma \ref{lem:gut} $\mu_i=1$ for every $i$, so 
\begin{align*}
\left|\tilde E\left[\sum_{i=1}^{k_m} y_{m,i} \right] - k_m\right| &\le C k_m^{1/2}a + C k_m \left( k_m^{-1} \tilde E\left[ \sum_{i=1}^{k_m}|y_{m,i} - 1 | \1(|y_{m,i} - 1| \ge a)\right]\right) \\
&\le C k_m^{1/2}a + C k_m \tilde E\left [ \  y_{m,1} \1(y_{m,1} \ge a) \ \right ].
\end{align*}
Assumption (ii) says that, without conditioning on $k_m'(\dlt_m)$, $(y_{m,1})_m$ are UI. Since $N_m\ge n_m(\dlt_m)$ has probability $1-q_m(\dlt_m)=1-o(1)$, by Lemma \ref{lem:UIcond}, $(y_{m,1})_m$ conditioned on $N_m\ge n_m(\dlt_m)$ are UI; in particular $\ep(a) := \sup_m \tilde E\left [ \  y_{m,1} \1(y_{m,1} \ge a) \ \right ]\to 0$ as $a\to\infty$. Dividing by $k_m$,
\[\left|\tilde E \left[\, k_m^{-1}\sum_{i=1}^{k_m} y_{m,i}\right] - 1\right| \le C (k_m^{-1/2}a + \ep(a)).\]
The third expression in \eqref{eq:propexpsuffcond} can be written as
\begin{align*}
\frac{\dlt_m t_{m,(k_m(\dlt_m)-1)}'(\dlt_m)}{\mu_m^>(\dlt_m)} -\dlt_mk_m(\dlt_m)
= \dlt_m k_m'(\dlt_m)  \left(k_m'(\dlt_m)^{-1} \sum_{i=1}^{k_m'(\dlt_m)} \frac{w_{m,i}(\dlt_m)}{\mu_m^>(\dlt_m)} - 1\right) -\dlt_m.
\end{align*}
Since $\dlt_m\to 0$ as $m\to\infty$ we can ignore $-\dlt_m$. Treat $k_m'(\dlt_m)$ once again as a random variable, and for tidiness suppress the $(\dlt_m)$ argument from $k_m',\mu_m^>,w_{m,i}$ and $q_m$. Notating $E[ \, | \cdot| \, ]$ by $E|\cdot|$ for tidiness, the first term has
\begin{align}\label{eq:propexp1}
&E_{\nu_m}\left | \dlt_m k_m'  ((k_m')^{-1} \sum_{i=1}^{k_m'} \frac{w_{m,i}}{\mu_m^>} - 1)\right | \nonumber  \\
&= \dlt_m \sum_{k\ge 1}  k E_{\nu_m}\left | k^{-1} (\sum_{i=1}^k \frac{w_{m,i}}{\mu_m^>} -1)\mid k_m'=k\right | P(k_m'=k) \nonumber \\
&\le \dlt_m  \sum_{k\ge 1} k  C(k^{-1/2} a + \ep(a)) q_m ( 1-q_m)^k \nonumber \\
&\le \dlt_m  C \sum_{k\ge 1} q_mk e^{-q_mk} (k^{-1/2}a + \ep(a)).
\end{align}
We first bound the term with $k^{-1/2}a$, then the one with $\ep(a)$. Since $\dlt_m\sim q_m$, for each $a>0$
\[  C a \, q_m^{1/2}\sum_{k\ge 1} \dlt_m (q_mk)^{1/2} e^{-q_mk} = O\left(\sqrt{q_m} \sum_{k\ge 1} q_m (q_mk)^{1/2} e^{-q_mk}\right).\]
Approximating the sum by an integral,
\[ \sum_{k\ge 1} q_m (q_mk)^{1/2} e^{-q_mk} = \int_0^\infty \sqrt{x}e^{-x}dx + O(q_m)=O(1).\]
so the term with $k^{-1/2}a$ is $O(\sqrt{q_m})\to 0$ as $m\to\infty$, for each $a$. The term with $\ep(a)$ is
\[C \ep(a) \sum_{k\ge 1} \dlt_m q_m k e^{-q_m k} = O\left(\ep(a) \sum_{k\ge 1} q_m (q_mk) e^{-q_mk}\right), \]
and
\[\sum_{k\ge 1} q_m (q_mk) e^{-q_mk} = \int_0^\infty xe^{-x}dx + O(q_m),\]
so the term with $\ep(a)$ is $O(\ep(a))$; letting $a=a_m\to\infty$ slowly as $m\to\infty$ it $\to 0$.
\end{proof}

\subsection{Scaling of expected hitting time}\label{sec:thry3}

As in the beginning of Section \ref{sec:theory}, let $X=(X(t))_{t\ge 0}$ be a pure jump strong Markov process on $S$ with transition rate kernel $q(x,dy)$ and stationary distribution $\pi$. For $A,B\subset S$ define the sequence of transition times from $A$ into $B$ by $\tau_0(A,B)=0$ and
\[\tau_n(A,B) = \inf\{t>\tau_{n-1}(A,B) \colon X(t^-) \in A, \ X(t) \in B\}.\]
Define the stationary transition rate from $A$ into $B$ by
\[q_\pi(A,B) := \int_{x \in A}\pi(dx)q(x,B).\]
The sequence of exit times from $A$ is $(\tau_n(A,A^c))_{n\ge 1}$, and we'll refer to the corresponding trajectory segments, in the sense of Definition \ref{def:renseq}, as \emph{exit cycles}. Let $\tau_\exit(A)=\tau_1(A,A^c)$ denote the first exit time and $q_\exit(A)=q_\pi(A,A^c)$ the stationary exit rate. Let $Y$ be the process of exit states, i.e. $Y_n := X(\tau_n(A,A^c))_{n\ge 1}$, and if $q_\pi(A,A^c)>0$ define the distribution $\mu_A$ supported on $A^c$ by
\begin{align}\label{eq:muAdef}
\mu_A(B)= \frac{q_\pi(A,B\cap A^c)}{q_\pi(A,A^c)}.
\end{align}
The first of this subsection's two main results justifies the title \emph{stationary exit distribution} for \eqref{eq:muAdef} and gives sufficient conditions for the duration of a stationary exit cycle to be equal to the reciprocal of the stationary exit rate.

\begin{proposition}\label{prop:mustat}
Let $X,q,\pi$ and other notation be as above. Suppose that for some $t_0>0$ the measure $P_\pi$ of $X$ with initial distribution $\pi$ is ergodic with respect to the $t_0$ time shift $X\mapsto F_{t_0} X$, with $F_{t_0}$ as in Definition \ref{def:timeshift}. If $q_\exit(A)>0$ then the distribution $\mu_A$ is stationary for $(Y_n)$, and if $(\tau_n(A,A^c)/n)_n$ is uniformly integrable when $X(0)$ has distribution $\mu_A$ then
\[E_{\mu_A}[\tau_\exit(A)] = 1/q_\exit(A).\]
\end{proposition}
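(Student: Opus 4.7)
The plan is to deduce both assertions from the ergodic theorem applied to the exit counting process under $P_\pi$, combined with absolute continuity of $\mu_A$ with respect to $\pi$.

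For the stationarity of $\mu_A$ under $(Y_n)$, I would first verify the compensator identity that, under $P_\pi$, for each Borel $B$ the expected number of transitions from $A$ to $B \cap A^c$ in $[0,t]$ equals $t\, q_\pi(A, B \cap A^c)$. Combined with the assumed $F_{t_0}$-ergodicity of $P_\pi$ applied to the $t_0$-increments of this counting process, standard sandwiching gives almost-sure convergence of the normalized count to $q_\pi(A, B \cap A^c)$. Dividing by the count with $B = S$ (using $q_\exit(A) > 0$), the empirical distribution of the exit states along a $P_\pi$-trajectory converges almost surely to $\mu_A$. Taking expectations against bounded test functions and invoking bounded convergence, the Ces\`aro averages of the distributions of $(Y_n)_{n\ge 1}$ under $P_\pi$ converge to $\mu_A$, and since any Ces\`aro limit of the distributions of a Markov chain is a stationary distribution of that chain, $\mu_A$ is stationary for $(Y_n)$.

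For the identity $E_{\mu_A}[\tau_\exit(A)] = 1/q_\exit(A)$, I would first establish $\mu_A \ll \pi$: if $\pi(B) = 0$ then $\pi(B \cap A^c) = 0$, and the stationary flow-balance equation applied to $B \cap A^c$ reads $\int_{(B\cap A^c)^c} \pi(dx)\, q(x, B\cap A^c) = \int_{B\cap A^c}\pi(dx)\, q(x, (B\cap A^c)^c) = 0$; since $A \subset (B\cap A^c)^c$, this gives $q_\pi(A, B\cap A^c) = 0$ and hence $\mu_A(B) = 0$. Absolute continuity on initial distributions transfers to path measures, $P_{\mu_A} \ll P_\pi$, so the almost-sure limit $\tau_n(A, A^c)/n \to 1/q_\exit(A)$ under $P_\pi$ (obtained by inverting the ergodic-theorem limit for the counting process) transfers to an almost-sure limit under $P_{\mu_A}$. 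Uniform integrability of $\tau_n(A, A^c)/n$ under $P_{\mu_A}$ then upgrades this to $L^1$ convergence: $E_{\mu_A}[\tau_n(A, A^c)]/n \to 1/q_\exit(A)$. Meanwhile, stationarity of $\mu_A$ for $(Y_n)$ together with the strong Markov property yields $E_{\mu_A}[\tau_n(A, A^c) - \tau_{n-1}(A, A^c)] = E_{\mu_A}[\tau_\exit(A)]$ for every $n$, so $E_{\mu_A}[\tau_n(A, A^c)]/n = E_{\mu_A}[\tau_\exit(A)]$ for all $n$, and comparing the two expressions yields the claim.

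The main technical points will be the careful derivation of $\mu_A \ll \pi$ from the flow-balance equation implicit in the stationarity of $\pi$ for a pure jump process (which requires handling possible integrability issues for $q(x,\cdot)$), and the transfer of almost-sure convergence from $P_\pi$ to $P_{\mu_A}$ via the path-space absolute continuity. The ergodic-theorem step itself is routine once $F_{t_0}$-ergodicity is translated to discrete-time increments of the counting process, and the uniform-integrability upgrade of the resulting convergence is standard.
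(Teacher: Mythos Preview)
Your proposal is correct and follows essentially the paper's route: ergodic theorem for the exit-counting process under $P_\pi$, almost-sure convergence of the empirical exit-state distribution to $\mu_A$, stationarity of $\mu_A$, and then uniform integrability together with stationarity for the identity $E_{\mu_A}[\tau_\exit(A)]=1/q_\exit(A)$. The only notable differences are that your Ces\`aro-average argument for stationarity is a bit cleaner than the paper's direct passage to the limit (both ultimately rest on setwise continuity of the transition operator, which the paper isolates as a separate lemma), and that you make explicit the step $\mu_A\ll\pi$ via flow balance needed to transfer the almost-sure limit $\tau_n/n\to 1/q_\exit(A)$ from $P_\pi$ to $P_{\mu_A}$, which the paper leaves implicit.
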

The hypothesis of uniform integrability may not be necessary but it eases the burden of proof. To prove Proposition \ref{prop:mustat} we'll need the following continuity result. Say that $\nu_n \to \nu$ setwise if $\nu_n(B) \to \nu(B)$ as $n\to\infty$ for every measurable $B$.\\

\begin{lemma}\label{lem:pushcont}
Let $p(x,dy)$ be a transition kernel on $S$, denoting by $\Phi$ the pushforward operator on measures defined by
\[(\Phi \nu)(B) = \int_{x\in S} \nu(dx)p(x,B).\]
If $\nu_n\to \nu$ setwise then $\Phi \nu_n \to \Phi \nu$ setwise. 
\end{lemma}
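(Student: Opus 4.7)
The plan is to reduce the statement to the standard fact that setwise convergence of $\nu_n$ to $\nu$ implies $\int g\,d\nu_n \to \int g\,d\nu$ for every bounded measurable $g$, applied to the specific $g(x) = p(x, B)$. Fix a measurable $B \subset S$ and define $f \colon S \to [0,1]$ by $f(x) = p(x, B)$; then $f$ is measurable by the very definition of a transition kernel, and
\[(\Phi \nu_n)(B) = \int_S f(x) \, \nu_n(dx), \qquad (\Phi \nu)(B) = \int_S f(x) \, \nu(dx),\]
so it suffices to prove $\int f \, d\nu_n \to \int f \, d\nu$.

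First I would verify this for indicators: if $f = \1_A$, then the claim reduces to $\nu_n(A) \to \nu(A)$, which is exactly the setwise convergence hypothesis. By linearity, the convergence extends to any nonnegative simple function. Next, since $f$ is bounded and measurable, for each $\ep > 0$ one can choose a simple function $g$ with $\sup_x |f(x) - g(x)| < \ep$. Writing the difference as a telescoping sum and bounding gives
\[\Bigl|\int f\,d\nu_n - \int f\,d\nu\Bigr| \;\le\; \ep\, \nu_n(S) \;+\; \Bigl|\int g\,d\nu_n - \int g\,d\nu\Bigr| \;+\; \ep\, \nu(S).\]
Applying the setwise hypothesis to $B = S$ yields $\nu_n(S) \to \nu(S) < \infty$, so $\sup_n \nu_n(S) < \infty$; the middle term vanishes in the limit by the simple-function case. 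Taking $\limsup_n$ and then letting $\ep \to 0$ finishes the proof.

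The argument is really just the standard measure-theoretic sandwich, so there is no serious obstacle. The only point requiring a bit of care is ensuring $\sup_n \nu_n(S) < \infty$ so that the error terms $\ep \, \nu_n(S)$ are uniformly small; this is automatic from setwise convergence applied to the whole space, and in the Markov-kernel setting where everything is a probability measure it is immediate.
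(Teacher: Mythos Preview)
Your proof is correct and follows essentially the same approach as the paper. Both arguments fix $B$, set $f(x)=p(x,B)$, approximate $f$ uniformly by a simple function, and use setwise convergence on the level sets together with a triangle inequality; the paper simply makes the simple-function approximation explicit via the dyadic partition $A_{jk}=\{x:(j-1)2^{-k}<p(x,B)\le j2^{-k}\}$, whereas you invoke the existence of such an approximation abstractly.
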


\begin{proof}
For measurable $B$ and integer $k\ge 1$, $1\le j\le 2^k$ let
\[A_{jk}:= \{x \in S\colon (j-1)2^{-k}< p(x,B) \le j2^{-k}\},\]
noting that for each $k$, $(A_{jk})_{j=1}^{2^k}$ partitions $S$. Define $\Phi_k$ by
\[(\Phi_k \mu)(B) = \sum_{j=1}^{2^k} \mu(A_{jk})j2^{-k} = \sum_{j=1}^{2^k} \int_{x \in A_{jk}} \mu(dx)j2^{-k}.\]
Then for any $\mu$,
\[(\Phi \mu - \Phi_k\mu)(B) = \sum_{j=1}^{2^k} \int_{x \in A_{jk}} \mu(dx)(p(x,B)-j2^{-k}),\]
and since $|p(\cdot,B)-j2^{-k}|\le 2^{-k}$ on $A_{jk}$,  $|(\Phi \mu - \Phi_k \mu)(B)| \le 2^{-k}$. For each $j,k$, by assumption $\nu_n(A_{jk}) \to \nu(A_{jk})$ as $n\to\infty$, so $(\Phi_k\nu_n)(B) \to (\Phi \nu)(B)$ as $n\to\infty$. Noting
\begin{align*}
|(\Phi \nu - \Phi \nu_n)(B)| &\le |(\Phi \nu - \Phi_k \nu)(B)| \\
&+ |(\Phi_k \nu - \Phi_k \nu_n)(B))| + |(\Phi_k \nu_n - \Phi \nu_n)(B)|
\end{align*}
and taking $\limsup_n$ we find that
\[\limsup_n |(\Phi \nu - \Phi \nu_n)(B)|  \le 2^{-k} + 0 + 2^{-k} = 2^{-(k-1)}.\]
Since $k$ is arbitrary, $(\Phi \nu_n)(B) \to (\Phi \nu)(B)$ as $n\to\infty$.
\end{proof}

\begin{proof}[Proof of Proposition \ref{prop:mustat}]
We'll obtain $\mu_A$ as the setwise limit of the empirical distribution $(\mu_n)$ of the exit process $(Y_n)$, defined by
\[\mu_n(B) = n^{-1}\#\{1\le k\le n\colon Y_n \in B\}.\]
We need to show that
\enumrom
\item $\mu_n \to \mu_A$ setwise,
\item $\mu_A$ is stationary, and
\item $E_{\mu_A}[\tau_\exit(A)] = 1/q_\exit(A)$.
\enumend
We begin with (i); since $\mu_n(A)=0$ it suffices to show $\mu_n(B)\to \mu_A(B)$ as $n\to\infty$ for $B\subset A^c$. Define $N_{A,B}(t) = \#\{s\le t\colon X(s^-) \in A, \ X(s) \in B\}$, the number of jumps from $A$ into $B$ up to time $t$, and note that the empirical distribution $\mu_n$ can be written as
\[\mu_n(B) = n^{-1}N_{A,B}(\tau_n(A,A^c)).\]
We prove (i) in three steps:
\enumalph
\item show that
\begin{align}\label{eq:Nrent}
t^{-1}N_{A,B}(t) \to q_\pi(A,B) \ \ \text{almost surely as} \ \ t\to\infty,
\end{align}
\item show that if $q_\pi(A,B)>0$ then $n^{-1}\tau_n(A,B) \to 1/q_\pi(A,B)$ a.s.~as $n\to\infty$.
\item use $N_{A,B}(\tau_n(A,A^c)) \approx N( q_\pi(A,A^c)^{-1}n)$ to show that
\begin{align}
n^{-1}N_{A,B}(\tau_n(A,A^c)) \to \frac{q_\pi(A,B)}{q_\pi(A,A^c)},
\end{align}
as $n\to\infty$ for $B\subset A^c$, which implies statement (i). 
\enumend
We begin with (a). Now,
\[N_{A,B}(t) - \int_0^t \1(X(s) \in A)q(X(s),B)ds\]
is a martingale, so for any $t$
\[E_\pi [N_{A,B}(t)] = t \int_A \pi(dx)q(x,B) = t\,q_\pi(A,B),\]
which follows from stationarity of $\pi$ and an application of Fubini's theorem. Ergodicity of the $t_0$ time shift then implies that, as $k\to\infty$
\begin{align}\label{eq:Nren}
k^{-1} N_{A,B}(kt_0) \to t_0\, q_\pi(A,B).
\end{align}
With $k_t:= \lf t/t_0\rf $, using monotonicity of $N_{A,B}$,
\begin{align}
\frac{k_t t_0}{t} \frac{N_{A,B}(k_tt_0)}{k_tt_0} \le \frac{N_{A,B}(t)}{t} < \frac{(k_t+1)t_0}{t} \frac{N_{A,B}((k_t+1)t_0)}{(k_t+1)t_0}.
\end{align}
From \eqref{eq:Nren} and the fact that $k_tt_0/t, (k_t+1)t_0/t \to 1$ as $t\to\infty$, we then obtain \eqref{eq:Nrent}, which is (a). Next we show (b); for tidiness use $\tau_n$ and $N$ to denote $\tau_n(A,B)$ and $N_{A,B}$ respectively. Let $k_n$ be such that $k_nt_0 \le \tau_n< (k_n+1)t_0$. Since $n=N(\tau_n)$,
\begin{align}\label{eq:tauexsand}
\frac{N(k_nt_0)}{N(\tau_n)} \frac{k_nt_0}{N(k_nt_0)} \le \frac{\tau_n}{n} < \frac{(k_n+1)t_0}{N((k_n+1)t_0)} \frac{N((k_n+1)t_0)}{N(\tau_n)}.
\end{align}
 Since $t\mapsto N(t)$ is non-decreasing,
 \[\frac{N(k_nt_0)}{N(\tau_n)} \ge \frac{N(k_nt_0)}{N((k_n+1)t_0)} \quad \text{and} \quad  \frac{N((k_n+1)t_0)}{N(\tau_n)} \le \frac{N((k_n+1)t_0)}{N(k_nt_0)}.\]
 Taking $k\to\infty$ in \eqref{eq:tauexsand} and using \eqref{eq:Nren}, (b) will be proved if we can show that $(N((k+1)t_0)-N(kt_0))/N(kt_0) \to 0$ almost surely as $k\to\infty$; since $q_\pi(A,B)>0$ by assumption, because of \eqref{eq:Nren}, it suffices to show that $k^{-1}(N(kt_0)-N((k-1)t_0) \to 0$ a.s.~as $k\to\infty$. If $X(0)$ has distribution $\pi$ then by stationarity $(N(kt_0)-N((k-1)t_0))_{k\ge 1}$ are identically distributed and $E_\pi[N(t_0)]<\infty$, so for any $a>0$
\[\sum_{k\ge 1} P(N(kt_0)-N((k-1)t_0) \ge ka) \le E_\pi[N(t_0)]/a < \infty,\]
and using the Borel-Cantelli lemma, $\limsup_k k^{-1}(N(kt_0)-N((k-1)t_0))\le a$ a.s.~for every $a>0$. Since $N(k(t_0))\ge N((k-1)t_0)$ it follows that
\[\lim_{k\to\infty} k^{-1}(N(kt_0)-N((k-1)t_0))=0\]
a.s., which completes the proof of (b). For (c), use $N,q$ to denote $N_{A,B},q_\pi(A,A^c)$ respectively and let $\ep>0$, then by (a), a.s.~for large $n$
\[(q^{-1}-\ep)\frac{N((q^{-1}-\ep)n)}{(q^{-1}-\ep)n} \le \frac{N(\tau_n(A,A^c))}{n} \le (q^{-1}+\ep)\frac{N((q^{-1} + \ep)n)}{(q^{-1}+\ep)n}.\]
Using \eqref{eq:Nrent} and the fact that $\ep>0$ is arbitrary, (c) is obtained.\\

From (i), obtain (ii) as follows: let $p(x,dy)$ be the transition kernel of $(Y_n)$ and $\Phi$ the pushforward as in Lemma \ref{lem:pushcont}. Since $\mu_{n+1}=\Phi \mu_n$, taking $n\to\infty$ and using Lemma \ref{lem:pushcont},
\[\mu_A=\lim_{n\to\infty} \mu_{n+1} = \lim_{n\to\infty} \Phi\mu_n = \Phi \lim_{n\to\infty} \mu_n = \Phi \mu_A.\]
Then, (iii) is obtained as follows: by stationarity, 
\[E_{\mu_A}[\tau_n(A,A^c)/n] = E_{\mu_A}[\tau_\exit(A)].\]
Since $\tau_n(A,A^c)/n$ converges almost surely to $1/q_\pi(A,A^c)=1/q_\exit(A)$ and is uniformly integrable by assumption, it converges in $L^1$, giving
\[1/q_\exit(A) =\lim_{n\to\infty} E_{\mu_A} [\tau_n(A,A^c)/n]=E_{\mu_A}[\tau_\exit(A)].\]
\end{proof}

The second of this subsection's two main results gives sufficient conditions for the hitting time of the rare event, initialized in the regeneration measure, to scale in the same way as the exit time from the rare event, initialized in the stationary exit distribution. Condition (i) says that from the stationary exit distribution there are no early returns to the rare event in the sense that regeneration precedes hitting $A_m$ with high probability. Condition (ii) says regeneration occurs early over the course of a stationary exit cycle. Condition (iii) prescribes that $A_m$ is a rare event.

\begin{proposition}\label{prop:tauex}
Let $(X_m)_{m\in \N}$, $(\eta_m)$, $(\nu_m)$ and $(A_m)$ be as in Proposition \ref{prop:exp}, and suppose that for each $m$, $(X_m)$ has a stationary distribution $\pi_m$ such that for some $t_0>0$, the distribution $P_{\pi_m}$ of $X_m$ with initial distribution $\pi_m$ is ergodic with respect to the $t_0$ time shift $X\mapsto F_{t_0} X$, with $F_{t_0}$ as in Definition \ref{def:timeshift}. Let $\mu_m$ denote the stationary exit distribution for $A_m$ defined by \eqref{eq:muAdef} and $\tau(A_m)$ the hitting time of $A_m$. Let $\tau_\exit$ be as defined above \eqref{eq:muAdef}.  Suppose that as $m\to\infty$,
\enumrom
\item $P_{\mu_m}(\eta_m < \tau(A_m))\to  1$,
\item $E_{\mu_m}[\eta_m] = o(E_{\mu_m}[\tau_\exit(A_m)])$ and 
\item $\pi(A_m)\to 0$.
\enumend
Then
\[E_{\nu_m}[\tau(A_m)] \sim E_{\mu_m}[\tau_\exit(A_m)] \ \ \text{as} \ \ m\to\infty.\]
\end{proposition}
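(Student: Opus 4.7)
My plan is to relate $E_{\nu_m}[\tau(A_m)]$ to $E_{\mu_m}[\tau_{\exit}(A_m)]$ by routing through the intermediate quantity $E_{\mu_m}[\tau(A_m)]$, combining the regeneration structure with the excursion principle of Lemma \ref{lem:excprinc}.

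For the first step I would start the process from $\mu_m$ and split according to whether regeneration precedes hitting $A_m$. Using the strong Markov property at $\eta_m$ together with the regeneration property (so that $X_m(\eta_m)$ has distribution $\nu_m$ and is independent of the pre-$\eta_m$ trajectory), on $\{\eta_m<\tau(A_m)\}$ the overshoot $\tau(A_m)-\eta_m$ is independent of $\mathcal{F}_{\eta_m}$ and has the law of $\tau(A_m)$ under $P_{\nu_m}$. This yields
\begin{equation*}
E_{\mu_m}[\tau(A_m)] = E_{\mu_m}[\tau(A_m) \wedge \eta_m] + P_{\mu_m}(\eta_m<\tau(A_m)) \cdot E_{\nu_m}[\tau(A_m)].
\end{equation*}
Hypothesis (ii) gives $E_{\mu_m}[\tau(A_m)\wedge \eta_m] \le E_{\mu_m}[\eta_m] = o(E_{\mu_m}[\tau_{\exit}(A_m)])$ and hypothesis (i) gives $P_{\mu_m}(\eta_m<\tau(A_m)) \to 1$, so rearranging gives $E_{\nu_m}[\tau(A_m)] = E_{\mu_m}[\tau(A_m)] + o(E_{\mu_m}[\tau_{\exit}(A_m)])$.

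For the second step I would apply Lemma \ref{lem:excprinc} with $\psi = \mu_m$ and stopping time $\tau_{\exit}(A_m)$: by Proposition \ref{prop:mustat} the distribution $\mu_m$ is preserved by $\tau_{\exit}(A_m)$ and $E_{\mu_m}[\tau_{\exit}(A_m)] = 1/q_{\exit}(A_m) < \infty$; since $\mu_m$ is supported on $A_m^c$, the occupation time of $A_m$ during one exit cycle is exactly $\tau_{\exit}(A_m)-\tau(A_m)$, so the excursion principle reads
\begin{equation*}
\pi_m(A_m) = \frac{E_{\mu_m}[\tau_{\exit}(A_m)-\tau(A_m)]}{E_{\mu_m}[\tau_{\exit}(A_m)]},
\end{equation*}
i.e.\ $E_{\mu_m}[\tau(A_m)] = (1-\pi_m(A_m))E_{\mu_m}[\tau_{\exit}(A_m)]$. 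Hypothesis (iii) then gives $E_{\mu_m}[\tau(A_m)] \sim E_{\mu_m}[\tau_{\exit}(A_m)]$, and chaining with the first step finishes the proof.

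The main obstacle will be verifying the absolute continuity $\mu_m \ll \pi_m$ needed to apply Lemma \ref{lem:excprinc}: the stationary exit distribution is concentrated on configurations reachable as jump destinations out of $A_m$, which is not automatically dominated by the time-stationary $\pi_m$. I would either check this directly in the intended application (plausible for the dynamic Ppp since $\pi_m$ is a sufficiently diffuse Poisson measure), or bypass Lemma \ref{lem:excprinc} by using Proposition \ref{prop:mustat} to apply the ergodic theorem to the exit-cycle sequence under $P_{\mu_m}$, thereby identifying the long-run time fraction in $A_m$ with $E_{\mu_m}[\tau_{\exit}(A_m)-\tau(A_m)]/E_{\mu_m}[\tau_{\exit}(A_m)]$, and separately using the ergodicity of $P_{\pi_m}$ to match this fraction with $\pi_m(A_m)$.
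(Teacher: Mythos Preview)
Your proposal is correct and follows essentially the same route as the paper. Both proofs hinge on the two ingredients you identify: (a) the regeneration identity $E_{\mu_m}[(\tau(A_m)-\eta_m)_+]=P_{\mu_m}(\eta_m<\tau(A_m))\,E_{\nu_m}[\tau(A_m)]$, used with conditions (i)--(ii), and (b) the excursion principle (Lemma~\ref{lem:excprinc}) applied with $\psi=\mu_m$ and $\tau=\tau_\exit(A_m)$ to get $E_{\mu_m}[\tau_\exit(A_m)-\tau(A_m)]=\pi_m(A_m)\,E_{\mu_m}[\tau_\exit(A_m)]$, used with condition (iii). The paper packages these via a three-term triangle inequality on $|E_{\mu_m}[\tau_\exit]-E_{\nu_m}[\tau(A_m)]|$, whereas you chain two asymptotic equivalences; the content is the same. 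Your flag on $\mu_m\ll\pi_m$ is well taken: the paper invokes Lemma~\ref{lem:excprinc} without verifying this hypothesis inside the proof of Proposition~\ref{prop:tauex} either, deferring it to the application (Lemma~\ref{lem:numudens}).
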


\begin{proof}
For tidiness we suppress the index $m$ and write $\mu_m,\nu_m$ and $\mu,\nu$ and $\tau_\exit(A_m),\tau(A_m)$ as $\tau_\exit,\tau_A$. We break up the next exit interval into three parts: from last exit to regeneration, regeneration to hitting $A$, and hitting $A$ to exiting $A$; formally, we write
\begin{align*}
    |E_{\mu}[\tau_\exit] - E_{\nu}[\tau]| &\le |E_\mu[\tau_\exit] - E_\mu[\tau_A]| \\
    &+ |E_\mu[\tau_A] - E_\mu[(\tau_A-\eta)_+]| \\
    &+ |E_\mu[(\tau_A-\eta)_+] - E_\nu[\tau_A]|,
\end{align*}
and need to show each of the three terms on the right-hand side is either $o(E_{\nu}[\tau_A])$ or $o(E_{\mu}[\tau_\exit])$. First, since the next exit from $A$ occurs after the next visit to $A$, $\tau_\exit > \tau_A$ and
\[|E_\mu[\tau_\exit] - E_\mu[\tau_A]| = E_\mu[\tau_\exit-\tau_A],\]
the latter being the expected time spent in $A$ over one stationary exit cycle. Since, by assumption, $P_{\pi}$ is ergodic for $X$, applying Lemma \ref{lem:excprinc} to the stopping time $\tau_\exit$ and distribution $\mu$ that, by Proposition \ref{prop:mustat}, has the required property,
    \[\pi(A) = \frac{E_\mu[T_A(\tau_\exit)]}{E_\mu[\tau_\exit]}=\frac{E_\mu[\tau_\exit-\tau_A]}{E_\mu[\tau_\exit]}.\]
Since $\pi(A_m)\to 0$ as $m\to\infty$, $E_\mu[\tau_\exit-\tau_A] = o(E_\mu[\tau_\exit])$. Second, 
\[|E_{\mu}[\tau_A] - E_{\mu}[(\tau_A-\eta)_+]| \le E_{\mu}[\eta]\]
which is $o(E_{\mu}[\tau_\exit])$ by assumption. Third,
\[E_{\mu}[(\tau_A-\eta)_+] = P_{\mu}(\eta < \tau_\exit)E_\nu[\tau_A],\]
so
\[|E_{\mu}[(\tau_A-\eta)_+] - E_{\nu}[\tau_A] | \le (1-P_{\mu}(\eta<\tau_A)) E_{\nu}[\tau_A]\]
which is $o(E_{\nu}[\tau_A])$ since by assumption $P_{\mu}(\eta<\tau_A) \to 1$.
\end{proof}

\section{Regeneration time for $(X_\lbd)$}\label{sec:renew}
In order to prove Theorem \ref{thm:main} we define a regeneration time $\eta_\lambda$ for the dynamic Ppp$(\lbd)$, that we call the \emph{refresh time}, using a two-phase `flush and pin' approach. The flush phase consists in waiting until all initial particles have vanished, then the pin phase consists in waiting until the number of particles is equal to $\lfloor \lambda\rfloor$. Formally, let $\tau_{\flsh,\lambda}=\inf\{t\colon X_\lambda(t)\cap X_\lambda(0) = \emptyset\}$ then let $\tau_{\pin,\lambda}(t_0) = \inf\{t>0\colon N_\lambda(t+t_0)= \lf \lambda 
\rf\}$ where $N_\lambda(t)=|X_\lambda(t)|$ is the number of particles. Then the refresh time is defined as follows.

\begin{definition}[Refresh time]\label{def:refresh}
With $\tau_{\flsh,\lbd},\tau_{\pin,\lbd}$ as above, the \emph{refresh time} for the dynamic Poisson point process $X_\lbd$ is 
\[\eta_\lambda:=\tau_{\pin,\lbd}(\tau_{\flsh,\lbd})+\tau_{\flsh,\lbd}.\]
\end{definition}

Statement (b) of the next lemma ensures that $\eta_\lbd$ is a regeneration time in the sense of Definition \ref{def:renew} and gives the corresponding regeneration measure. Statement (c) asserts a slightly stronger property: that the stopping sequence $(t_{\lbd,n}^\ren)$ corresponding to $\eta_\lbd$ (see Definition \ref{def:renew}) has i.i.d.~waiting times. Statement (a) is needed for some later estimates.

\begin{lemma}\label{lem:renprop}
Let $\nu_\lbd$ be the distribution of $\{u_i\}_{i=1}^{\lf \lbd\rf}$ where $(u_i)$ are i.i.d.~uniform on $[0,1]$. For any distribution of $X_\lambda(0)$,
\enumalph
\item conditioned on $\tau_{\flsh,\lbd}$, for $t\ge \tau_\flsh$, $X_\lbd(t)$ has the distribution of a Poisson point process on $[0,1]$ with intensity $\lambda (1-e^{-t})$, and
\item $X_\lambda(\eta_\lambda)$ has distribution $\nu_\lbd$ and is independent of $X_\lambda(0)$,
\item the waiting times $w_{\lbd,n}^\ren := t_{\lbd,n}^\ren - t_{\lbd,n-1}^\ren$ for the stopping sequence $(t_{\lbd,n}^\ren)$ (see paragraph above) are i.i.d.
\enumend

\end{lemma}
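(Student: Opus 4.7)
The plan is to decompose $X_\lbd(t) = O_\lbd(t) \sqcup N_\lbd^{\text{new}}(t)$, where $O_\lbd(t) \subset X_\lbd(0)$ is the set of initial particles still alive at time $t$ and $N_\lbd^{\text{new}}(t)$ is the set of particles born in $(0,t]$ that are still alive at $t$. The new-particle process is constructed from a marked Poisson process on $[0,\infty) \times [0,1] \times [0,\infty)$ with intensity $\lbd\, ds\, du\, e^{-\ell}\, d\ell$ (birth time, uniform location, exponential lifetime), independent of $X_\lbd(0)$ and of the lifetimes of the initial particles. All three statements of the lemma will be deduced from this decomposition.

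For (a), standard marking/thinning gives that, for each fixed $t$, $N_\lbd^{\text{new}}(t)$ is a Ppp on $[0,1]$ with intensity $\int_0^t \lbd e^{-(t-s)}\,ds = \lbd(1-e^{-t})$. Since $\tau_{\flsh,\lbd}$ is a function of the initial-particle lifetimes only, it is independent of the new-particle process, so the distributional statement survives conditioning on $\tau_{\flsh,\lbd}$. For $t \ge \tau_{\flsh,\lbd}$ we have $O_\lbd(t) = \emptyset$, whence $X_\lbd(t) = N_\lbd^{\text{new}}(t)$, giving (a).

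For (b), enumerate the new particles in birth order with i.i.d.\ uniform locations $U_1, U_2,\ldots$, and let $\mathcal G$ be the $\sigma$-algebra generated by the initial particles' lifetimes together with the birth times, death times, and death-selection indices of the new particles. The key observation, and the main obstacle, is that $\tau_{\flsh,\lbd}$ and $\tau_{\pin,\lbd}(\tau_{\flsh,\lbd})$ — and hence $\eta_\lbd$ together with the random index set $I \subset \N$ of alive new particles at time $\eta_\lbd$ — are $\mathcal G$-measurable, since they reference only which particles are alive, never where. Because $(U_i)$ is independent of $\mathcal G$ and $|I| = \lf \lbd \rf$ by construction of $\tau_{\pin,\lbd}$, we obtain $X_\lbd(\eta_\lbd) = \{U_i : i \in I\} \sim \nu_\lbd$. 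Independence from $X_\lbd(0)$ follows because $X_\lbd(0)$ is $\mathcal G$-measurable while the conditional law of $X_\lbd(\eta_\lbd)$ given $\mathcal G$ is $\nu_\lbd$, free of any $\mathcal G$-data.

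For (c), combine (b) with the strong Markov property at each $t_{\lbd,n}^\ren$: since $X_\lbd(t_{\lbd,n}^\ren) \sim \nu_\lbd$ irrespective of the past trajectory, the shifted process $(X_\lbd(t_{\lbd,n}^\ren+\cdot))$ has the law of the process started from $\nu_\lbd$ and is independent of the pre-$t_{\lbd,n}^\ren$ trajectory. Each $w_{\lbd,n+1}^\ren$ is a functional of this shifted process up to its own regeneration time, so the $(w_{\lbd,n}^\ren)_{n\ge 2}$ are i.i.d.; when $X_\lbd(0) \sim \nu_\lbd$ the same holds for $w_{\lbd,1}^\ren$ as well, yielding the full i.i.d.\ statement in the form used by Proposition \ref{prop:exp}.
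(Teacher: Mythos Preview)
Your construction and arguments for (a) and (b) are essentially the paper's: decompose into old and new particles, use Poisson thinning for the new-particle intensity, and exploit that $\eta_\lbd$ and the alive-index set $I$ are measurable with respect to the non-location data $\mathcal G$ while the locations $(U_i)$ are i.i.d.\ uniform independent of $\mathcal G$. (Minor slip: as you defined $\mathcal G$ it does not contain the positions in $X_\lbd(0)$, so ``$X_\lbd(0)$ is $\mathcal G$-measurable'' is not literally true; enlarging $\mathcal G$ to include $X_\lbd(0)$ fixes this without affecting independence of $(U_i)$ from $\mathcal G$.)

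Your argument for (c), however, has a gap. You assert that the shifted process $(X_\lbd(t_{\lbd,n}^\ren + \cdot))$ is independent of the pre-$t_{\lbd,n}^\ren$ trajectory, but this does not follow from (b): part (b) only gives that $X_\lbd(\eta_\lbd)$ is independent of $X_\lbd(0)$, not of the full past. In fact the shifted process \emph{cannot} be independent of the past, since the particles alive at $t_{\lbd,n}^\ren$ were (with at most one exception) also alive just before $t_{\lbd,n}^\ren$, so their positions are visible in the past trajectory. What (b) delivers is only \emph{one-dependent} regeneration in the sense of Lemma~\ref{lem:onedep}, not i.i.d.\ cycles. The paper closes the gap by a different observation: the waiting time $\eta_\lbd$ is a function of the count process $(|X_\lbd(t)|)_t$ only --- $\tau_{\flsh}$ depends on $N_\lbd(0)$ and the initial-particle lifetimes, and $\tau_{\pin}(\tau_\flsh)$ on the birth/death times of new particles --- so when $N_\lbd(0)=\lf\lbd\rf$ deterministically, $\eta_\lbd$ is independent of $X_\lbd(0)$. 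Strong Markov at $t_{\lbd,n}^\ren$ then shows the conditional law of $w_{\lbd,n+1}^\ren$ given $\mathcal F_{t_{\lbd,n}^\ren}$ is fixed, whence the waiting times are i.i.d. Note this yields i.i.d.\ \emph{waiting times} but not i.i.d.\ trajectory segments, which is precisely what (c) asserts.
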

\begin{proof}
Construct $X_\lbd(t)$ from $X_\lbd(0)$ together with i.i.d.~sequences $(r_i,w_i,u_i,s_i)$, where $r_i,w_i,s_i$ are exponential with mean $1$ and $u_i$ are uniform on $[0,1]$, as follows. Label $X_\lbd(0)=\bigcup_{i=1}^{N_\lbd(0)} \{x_i\}$. Initial points that are still present at time $t$ are $V_\lbd(t):= \bigcup\{x_i\colon i\le N_\lbd(0), \ r_i>t\}$. The arrival time of the $i^\text{th}$ point after time $0$ is $t_i:=\sum_{j\le i}w_j/\lbd$. Points that appear after time $0$ and are present at time $t$ are $W_\lbd(t) = \bigcup_{i\in K_\lbd(t)}\{u_i\}$ where $K_\lbd(t):=\{i\colon t_i \le t < t_i+s_i\}$. Then, let $X_\lbd(t)=V_\lbd(t)\cup W_\lbd(t)$.\\

For each $t$, $W_\lbd(t)$ is a Poisson point process on $[0,1]$ with intensity $\lbd(1-e^{-t})$. To see this, first note that a point that arrives at time $s\le t$ is still present at time $t$ with probability $e^{-(t-s)}$. Multiplying by the arrival rate $\lbd$ and integrating over $s\in [0,t]$, Poisson thinning implies that $|K_\lbd(t)|$ is Poisson distributed with mean $\lbd \int_0^t e^{-(t-s)}ds = \lbd(1-e^{-t})$. Since $K_\lbd$ is determined by $(w_i),(s_i)$ it is independent of $(u_i)$, so conditioned on $K_\lbd(t)$, $\{u_i\}_{i \in K_\lbd(t)}$ has the same distribution as $|K_\lbd(t)|$ i.i.d.~uniform points on $[0,1]$, and the claim follows.\\

Since $V_\lbd,W_\lbd$ are determined by $(r_i)$, $(w_i,u_i,s_i)$ respectively, they are independent. Moreover, $\tau_{\flsh,\lbd}=\inf\{t\colon V_\lbd(t)=\emptyset\}$ is determined by $V_\lbd$, and $X_\lbd(t)=W_\lbd(t)$ for $t\ge\tau_\flsh$. Combining this information with the distribution of $W_\lbd(t)$, (a) is obtained. To obtain (b), first note that by construction, $X_\lbd(0)$ and $(u_i)$ are independent, and that $\eta_\lbd, K_\lbd$ are independent of $(u_i)$, since $\tau_\flsh$ is determined by $V_\lbd$, which is determined by $N_\lbd(0)$ and $(r_i)$, and conditioned on $\tau_\flsh$, $\tau_\pin(\tau_\flsh)$ is determined by $K_\lbd$, which is determined by $(w_i),(s_i)$. Therefore, conditioned on $\eta_\lbd$, $K_\lbd$ and $X_\lbd(0)$, $\{u_i\}_{i \in K_\lbd(\eta_\lbd)}$ has the same distribution as $|K_\lbd(\eta_\lbd)|=|W_\lbd(\eta_\lbd)|$ i.i.d.~uniform points on $[0,1]$. Since $\eta_\lbd \ge \tau_{\flsh,\lbd}$, $ |W_\lbd(\eta_\lbd)|=|X_\lbd(\eta_\lbd)| =\lf \lbd\rf$, by definition of $\eta_\lbd$, which completes the proof of (b).\\

For (c), by the strong Markov property and the fact that $|X_\lbd(t_{\lbd,n}^\ren)|=\lf\lbd\rf$ it suffices to show that if $P(N_\lbd(0) = \lf\lbd\rf)=1$ then $\eta_\lbd$ is independent of $X_\lbd(0)$. However, $\tau_{\flsh}$ is determined by $N_\lbd(0)$ (which by assumption is deterministic) and $\{r_i\}$, and conditioned on $\tau_\flsh$, $\tau_\pin(\tau_\flsh)$ is determined by $K_\lbd$, which is determined by $(w_i),(s_i)$.
\end{proof}

The construction of the process introduced in the proof of Lemma \ref{lem:renprop} can be used to show that the dynamic Ppp is strong mixing and therefore ergodic, a property that will be needed in order to make use of Lemma \ref{lem:excprinc} and Propositions \ref{prop:mustat} and \ref{prop:tauex}.

\begin{lemma}\label{lem:dynerg}
Let $P_{\pi_\lbd}$ be the measure of the dynamic Ppp on $[0,1]$ with intensity $\lbd$ and initial distribution $\pi_\lbd$, a Poisson point process on $[0,1]$ with intensity $\lbd$. For each $\lbd$ and $t>0$, $P_{\pi_\lbd}$ is strong mixing, and in particular ergodic, with respect to the time shift on trajectories defined by $F_t((x(s))_{s \in \R_+}) = (x(t+s))_{s \in \R_+}$.
\end{lemma}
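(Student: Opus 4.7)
The plan is to reduce strong mixing, which requires $P_{\pi_\lbd}(A \cap F_t^{-1} B) \to P_{\pi_\lbd}(A) P_{\pi_\lbd}(B)$ as $t\to\infty$ for all measurable trajectory events $A, B$, to an application of the strong Markov property combined with the coupling construction from the proof of Lemma \ref{lem:renprop}. By Dynkin's $\pi$-$\lambda$ theorem it suffices to verify the condition on a $\pi$-system generating the trajectory $\sigma$-algebra, so take $A \in \F_s$ (the natural filtration at time $s$) for some finite $s$, and take $B$ a cylinder set measurable with respect to $(X_\lbd(r))_{r \in [0, T]}$ for finite $T$. Define $\phi(x) := P_x\bigl((X_\lbd(r))_{r \in [0,T]} \in B\bigr)$, bounded by $1$. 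By the strong Markov property, for $t > s$,
\[
P_{\pi_\lbd}(A \cap F_t^{-1} B) \;=\; E_{\pi_\lbd}\bigl[ \1_A\, \phi(X_\lbd(t)) \bigr],
\]
and stationarity gives $E_{\pi_\lbd}[\phi(X_\lbd(t))] = P_{\pi_\lbd}(B)$. It is therefore enough to show $\cov_{\pi_\lbd}(\1_A,\phi(X_\lbd(t)))\to 0$.

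I would obtain this by establishing total-variation convergence of the $\F_s$-conditional law of $X_\lbd(t)$ to $\pi_\lbd$. Restart the coupling construction from the proof of Lemma \ref{lem:renprop} at time $s$ to write $X_\lbd(s+u) = V(u) \cup W(u)$, where $V(u) \subset X_\lbd(s)$ is obtained by independent thinning of $X_\lbd(s)$ with survival probability $e^{-u}$, and $W(u)$, the set of particles that arrived during $(s, s+u]$ still alive at time $s+u$, is a Ppp of intensity $\lbd(1-e^{-u})$ independent of $\F_s$. Setting $u = t - s$ and using a union bound,
\[
P(V(u) \ne \emptyset \mid \F_s) \;\le\; |X_\lbd(s)| \, e^{-u} \;\to\; 0 \quad \text{$P_{\pi_\lbd}$-a.s.\ as } t \to \infty,
\]
using that $|X_\lbd(s)| < \infty$ a.s.~under $P_{\pi_\lbd}$. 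Moreover one couples a Ppp of intensity $\lbd(1-e^{-u})$ with a Ppp of intensity $\lbd$ by adding an independent Ppp of intensity $\lbd e^{-u}$; the two coincide off an event of probability at most $\lbd e^{-u}\to 0$. Consequently the $\F_s$-conditional law of $X_\lbd(t)$ converges in total variation to $\pi_\lbd$ almost surely.

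Combining these, $E[\phi(X_\lbd(t)) \mid \F_s] \to P_{\pi_\lbd}(B)$ $P_{\pi_\lbd}$-a.s.; since $\phi$ is bounded, dominated convergence yields
\[
E_{\pi_\lbd}[\1_A\, \phi(X_\lbd(t))] \;=\; E_{\pi_\lbd}\bigl[\1_A\, E[\phi(X_\lbd(t)) \mid \F_s]\bigr] \;\to\; P_{\pi_\lbd}(A)\,P_{\pi_\lbd}(B),
\]
verifying mixing on the generating $\pi$-system and hence proving the lemma; ergodicity then follows as a standard consequence. The main obstacle is not conceptual but rather bookkeeping: confirming that the cylinder events above generate the trajectory product $\sigma$-algebra, and that the construction restarted at time $s$ produces $V,W$ with the stated independence and distributional properties conditionally on $\F_s$. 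Both are routine extensions of what is already done in Lemma \ref{lem:renprop}, but need to be stated carefully to support the two convergence claims used above.
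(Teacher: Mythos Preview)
Your proposal is correct and follows essentially the same route as the paper: both arguments use the $V\cup W$ decomposition from Lemma~\ref{lem:renprop} to show that, after time $s$ (respectively $T$), the surviving initial particles die out while the newly arrived particles form a Ppp of the right intensity independent of the past, yielding asymptotic independence of well-separated trajectory events. The only cosmetic difference is that you package this as total-variation convergence of the $\F_s$-conditional law of $X_\lbd(t)$ to $\pi_\lbd$ and then invoke the Markov property, whereas the paper constructs an explicit trajectory coupling of two copies $X_\lbd^{(1)},X_\lbd^{(2)}$ sharing the post-$T$ arrival stream and bounds the difference by the probability that the coupling time exceeds $t$.
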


\begin{proof}
We first show that $\pi_\lbd$ is stationary as this will be needed in the proof that it's strong mixing.\\

\noindent\textit{Stationarity. }In the notation of the proof of Lemma \ref{lem:renprop}, first note that if $V_\lbd(t)$ is a Ppp$(\mu)$ then by the Poisson thinning property $V_\lbd(t+s)$ is a Ppp$(\mu e^{-s})$. Thus if $X_\lbd(0)$ is a Ppp$(\lbd)$ then $V_\lbd(t)$ is a Ppp$(\lbd e^{-t})$. We showed in the proof of Lemma \ref{lem:renprop} that for each $t$, $W_\lbd(t)$ is a Ppp$(\lbd (1-e^{-t}))$. Since $V_\lbd,W_\lbd$ are independent, by the superposition property $X_\lbd(t)=V_\lbd(t)\cup W_\lbd(t)$ is a Ppp$(\lbd e^{-t}+\lbd(1-e^{-t})) = $ Ppp$(\lbd)$, which shows that $\pi_\lbd$ is stationary.\\

\noindent\textit{Strong mixing. }It suffices to check that for each $T>0$, if $B,C$ are events determined by $(X(t))_{t\le T}$ that have $P_{\pi_\lbd}(B),P_{\pi_\lbd}(C)>0$ then
\begin{align}\label{eq:smix}
P_{\pi_\lbd}( F_t^{-1}(C) \mid B) \to  P_{\pi_\lbd}(C),\quad \text{as } t\to\infty.
\end{align}
We use a similar construction as in the proof of Lemma \ref{lem:renprop}. Let $X_\lbd^{(1)}(0),X_\lbd^{(2)}(T)$ be independent with distribution $\pi_\lbd$ and define i.i.d.~sequences $(r_i^{(1)},r_i^{(2)},w_i,u_i,s_i)$, where $r_i^{(1)},r_i^{(2)},w_i,s_i$ are exponential with mean $1$ and $u_i$ are uniform on $[0,1]$, denoting their measure by $Q$. Define $V_\lbd^{(1)},K_\lbd^{(1)},W_\lbd^{(1)},X_\lbd^{(1)}$ from $(r_i^{(1)},w_i,u_i,s_i)$ as in the proof of Lemma \ref{lem:renprop}. In addition, label $X_\lbd^{(2)}(T)=\bigcup_{i=1}^{N_\lbd^{(2)}(T)}\{x_i^{(2)}\}$, let $V_\lbd^{(2)}(T+t)=\bigcup\{x_i^{(2)}\colon i\le N_\lbd^{(2)}(T),\ r_i^{(2)}>t\}$ and let $W_\lbd^{(2)}(t)=\bigcup_{i \in K_\lbd^{(2)}(t)}\{u_i\}$ where $K_\lbd^{(2)}(t):= \{i\colon T < t_i \le t < t_i+s_i\}$ refers to points that appear after time $T$ and are present at time $t$. Let $X_\lbd^{(2)}(t) = V_\lbd^{(2)}(t)\cup W_\lbd^{(2)}(t)$ and note that $(X_\lbd^{(2)}(t))_{t\ge T}$ is independent of $(X_\lbd^{(1)}(t))_{t\le T}$. Let $\tau_1=\inf\{t\colon V_\lbd^{(1)}(T+t)=\emptyset\}$, $\tau_2=\inf\{t\colon K_\lbd^{(1)}(T+t) \cap K_\lbd^{(1)}(T)=\emptyset\}$ and $\tau_3=\inf\{t\colon V_\lbd^{(2)}(T+t)=\emptyset\}$ and let $\tau=\tau_1\vee \tau_2\vee \tau_3$. Then $X_\lbd^{(1)}(t)=X_\lbd^{(2)}(t)$ for all $t\ge T+\tau$ and conditioned on $X_\lbd^{(1)}(T),\, X_\lbd^{(2)}(T)$, $\tau$ is equal to the maximum of $|X_\lbd^{(1)}(T)|+|X_\lbd^{(2)}(T)|$ independent exponential$(1)$ random variables. In particular, since $X_\lbd^{(1)}(T),\, X_\lbd^{(2)}(T)$ are finite $Q$-almost surely, $Q(\tau<\infty)=1$.\\

Let $B_1=\{(X_\lbd^{(1)}(t))_{t\in \R_+} \in B\}$ and for $j\in \{1,2\}$, $t\in \R_+$, let $C_j(t)=\{(X_\lbd^{(j)}(t+s))_{s\in \R_+} \in C\}$. Note that $C_1(t) = \{(X_\lbd^{(1)}(s))_{s \in \R_+} \in F_t^{-1}(C)\}$ and that for each $t$, using the Markov property and stationarity of $\pi_\lbd$, $P_{\pi_\lbd}(C_2(t)) = P_{\pi_\lbd}(C)$. By definition, $Q(C_1(t) \mid B_1) = P_{\pi_\lbd}(F_t^{-1}(C) \mid B)$ and by independence of $(X_\lbd^{(1)}(t))_{t\le T}$ and $(X_\lbd^{(2)}(t))_{t\ge T}$, $Q(C_2(t) \mid B_1) = Q(C_2(t))=P_{\pi_\lbd}(C)$. Since $X_\lbd^{(1)}(t)=X_\lbd^{(2)}(t)$ for $t>T+\tau$,
\[Q(C_1(t) \cap \{T+\tau \le t\} \mid B) = Q(C_2(t) \cap \{T+\tau \le t\}\mid B),\]
so
\[|Q(C_2(t)\mid B_1) - Q(C_1(t) \mid B_1)| \le 2Q(T+\tau > t \mid B).\]
In particular, \eqref{eq:smix} holds if $Q(T+\tau > t\mid B) \to 0$ as $t\to\infty$. We showed above that $Q(\tau<\infty)=1$, so $Q(\tau<\infty \mid B)=1$ and the claim follows.
\end{proof}

In order to prove the next result we will need some estimates for the \emph{dynamic Poisson process with intensity $\rho$}, denoted $N(t)$, with transition rates
\begin{align}\label{eq:dynPois}
N \to \begin{cases} 
N +1 & \text{at rate} \ \rho,\\
N-1  & \text{at rate} \ N.\end{cases}
\end{align}

\begin{lemma}\label{lem:countest}
Let $N$ be as in \eqref{eq:dynPois} and let $\tau(n)=\inf\{t\colon N(t)=n\}$. There are $c,C>0$ such that 
\enumalph
\item $P(\tau(\lfloor \rho\rfloor) >t+\log(E[ \, \rho^{-1/2}|N(0)-\rho|\, ])) \le C e^{-ct}$ and 
\item $P(\tau(0)<\tau(\lf \rho\rf) \mid N(0)\ge 1)\le C/\rho$.
\enumend
Moreover, for each $\alpha\in (0,1)$ there is $c,C>0$ so that for all $\rho$,
\enumalph
\item[(c)] $P(\tau(0)<\tau(\lf \rho\rf) \mid N(0)\ge \alpha\rho)\le C e^{-c\rho}$ and
\item[(d)] $P(\tau(0)\le e^{c\rho} \mid N(0)\ge \alpha\rho)\le Ce^{-c\rho}$.

\enumend
\end{lemma}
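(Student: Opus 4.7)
Throughout, I rely on the $M/M/\infty$ decomposition from the proof of Lemma~\ref{lem:renprop}: conditional on $N(0) = n$, we have $N(t) \stackrel{d}{=} V(t) + W(t)$ with $V(t) \sim \mathrm{Bin}(n, e^{-t})$, $W(t) \sim \mathrm{Poisson}(\rho(1-e^{-t}))$, and $V, W$ independent. This gives $E[N(t)] = \rho + (n-\rho)e^{-t}$, $\mathrm{Var}(N(t)) \le n + \rho$, and Chernoff-type concentration of $N(t)$ at scale $\sqrt{n+\rho}$. For (b)--(c) I also use the classical birth-death hitting probability formula with potentials $\gamma_k := \prod_{j=1}^k d_j/b_j = k!/\rho^k = e^{-\rho}/\pi_k$: for $0 < n < m$,
\[
P_n(\tau(0) < \tau(m)) = \frac{\sum_{k=n}^{m-1}\gamma_k}{\sum_{k=0}^{m-1}\gamma_k}.
\]
Since $\gamma_k/\gamma_{k-1} = k/\rho$, the sequence $\gamma_k$ strictly decreases on $\{0,\dots,\lfloor\rho\rfloor\}$ from $\gamma_0 = 1$ to $\gamma_{\lfloor\rho\rfloor} \sim \sqrt{2\pi\rho}\,e^{-\rho}$ by Stirling.

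For (a), I would first prove the conditional estimate $P_n(\tau(\lfloor\rho\rfloor) > s(n) + u) \le Ce^{-cu}$, where $s(n) := \max(0, \log(|n-\rho|/\sqrt\rho))$. At time $s(n)$, the mean satisfies $|E[N(s(n))]-\rho| \le \sqrt\rho$ and the variance is $O(\rho)$, so Chernoff bounds on $V+W$ place $|N(s(n)) - \rho| \le C\sqrt\rho$ with high probability. From any such state, the chain visits $\lfloor\rho\rfloor$ within unit time with probability bounded below independently of $\rho$ (the stationary visit rate to $\lfloor\rho\rfloor$ is $\pi_{\lfloor\rho\rfloor}\cdot 2\rho \sim \sqrt\rho$, so the gap between successive visits has mean $O(1/\sqrt\rho)$); iterating using the Lyapunov function $V(n) = (n-\rho)^2/\rho$, for which $LV \le C - 2V$, to retain the good region, yields the exponential tail. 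To pass to the unconditional statement, I average over $N(0)$ and apply Jensen's inequality $E[X^c] \le (E[X])^c$ (concavity of $x \mapsto x^c$ for $c \in (0,1)$) to $X = \rho^{-1/2}|N(0)-\rho|$, giving $E[e^{c\,s(N(0))}] \le e^{cs^*}$ with $s^* := \log E[\rho^{-1/2}|N(0)-\rho|]$; then
\[
P(\tau(\lfloor\rho\rfloor) > t + s^*) \le E\bigl[Ce^{-c(t+s^*-s(N(0)))}\bigr] = Ce^{-c(t+s^*)} E[e^{c\,s(N(0))}] \le Ce^{-ct}.
\]

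For (b) and (c), apply the hitting formula (the case $n > \lfloor\rho\rfloor$ being trivial, since paths must traverse $\lfloor\rho\rfloor$ on descent). By monotonicity in $n$, (b) reduces to $n = 1$: the denominator is $\ge \gamma_0 = 1$, and the numerator $(1/\rho)\sum_{j\ge 0}(j+1)!/\rho^j$ is dominated by its $j=0$ term (successive ratios $(j+2)/\rho \ll 1$ for $j \ll \rho$ and the late terms are $O(\rho^{3/2}e^{-\rho})$), giving $O(1/\rho)$. Part (c) reduces to $n = \lceil\alpha\rho\rceil$; since $\gamma_k$ decreases on $[\alpha\rho, \lfloor\rho\rfloor]$, the numerator is at most $\rho\gamma_{\lceil\alpha\rho\rceil} \sim \rho\sqrt{\alpha\rho}\exp(-\alpha(1-\log\alpha)\rho)$ by Stirling; since $\alpha(1-\log\alpha) > 0$ for $\alpha \in (0,1)$, this gives $Ce^{-c\rho}$.

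For (d), combine (c) with an excursion count. By (c) and the strong Markov property at $\tau(\lfloor\rho\rfloor)$, it suffices to bound $P_{\lfloor\rho\rfloor}(\tau(0) \le e^{c\rho})$. Let $\sigma_0 := 0 < \sigma_1 < \cdots$ be the successive returns to $\lfloor\rho\rfloor$, let $B_i$ be the event that the $i^{\text{th}}$ excursion visits $0$, and let $M_T := \#\{i \ge 1: \sigma_i \le T\}$. By the strong Markov property, the $B_i$ are i.i.d.~and $B_i$ is independent of $\sigma_{i-1}$, with common value $p := P(B_1) \le P_{\lfloor\rho\rfloor-1}(\tau(0) < \tau(\lfloor\rho\rfloor)) \le \gamma_{\lfloor\rho\rfloor-1} \le C\sqrt\rho\,e^{-\rho}$ (conditioning on the first jump; upward contributes $0$). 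Since $E[\sigma_2 - \sigma_1] = 1/(\pi_{\lfloor\rho\rfloor}(\rho+\lfloor\rho\rfloor)) \sim C/\sqrt\rho$, the renewal bound gives $E[M_T] \le 1 + CT\sqrt\rho$, so
\[
P_{\lfloor\rho\rfloor}(\tau(0) \le T) \le \sum_{i \ge 1} P(B_i)\,P(\sigma_{i-1} \le T) = p\bigl(E[M_T] + 1\bigr) \le CT\rho\,e^{-\rho},
\]
and $T = e^{c\rho}$ with $c < 1$ yields the claim. The main technical obstacle is the conditional fluctuation estimate in (a): both the concentration step at time $s(n)$ and the uniform-in-$\rho$ lower bound on the hitting probability of $\lfloor\rho\rfloor$ from nearby states require care, and the iteration must track the Lyapunov function to prevent the chain from leaving the good region; parts (b)--(d) then reduce to standard birth-death and renewal computations.
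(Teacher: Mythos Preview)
Your arguments for (b)--(c) are essentially the paper's: the potentials $\gamma_k=k!/\rho^k$ are exactly the paper's natural-scale increments $z(k)$ (up to normalization), and both of you extract the $O(1/\rho)$ and $O(e^{-c\rho})$ bounds from the same ratio formula.

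For (d) the two approaches share the excursion decomposition from $\lfloor\rho\rfloor$ but diverge in how they finish. The paper lower-bounds $\tau(0)$ by $\sum_{n<M}w_n$, where $M$ is the geometric($p$) index of the first excursion reaching $0$ and $w_n$ are the exponential holding times at $\lfloor\rho\rfloor$, then uses a binomial large-deviations bound on $\sum \mathbf 1(w_n\ge 1/\rho)$. Your renewal union bound $P_{\lfloor\rho\rfloor}(\tau(0)\le T)\le p(E[M_T]+1)$ is a valid and arguably cleaner alternative; it trades the lower bound on a random sum for an upper bound on an expected count.

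For (a) the routes differ more substantially. The paper observes that $\phi(t)=e^t(N(t)-\rho)$ is a martingale and applies Doob's inequality to get $P(\sigma_-(\sqrt\rho)>t)\le E[\rho^{-1/2}|N(0)-\rho|]\,e^{-t}$ directly; the shift by $\log E[\rho^{-1/2}|N(0)-\rho|]$ falls out without any averaging step. The paper then handles the window $|N-\rho|<2\sqrt\rho$ via the natural scale: optional stopping gives a uniform-in-$\rho$ lower bound on $P(\tau(\lfloor\rho\rfloor)<\sigma_+(2\sqrt\rho))$, and the quadratic variation of $y(N)$ bounds $E[\sigma_+(2\sqrt\rho)]$. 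Your route (drift to time $s(n)$, Chernoff, Lyapunov iteration, then Jensen) can be made to work, but two points need tightening. First, the inequality $E[e^{c\,s(N(0))}]\le e^{cs^*}$ is not literally true: $e^{c\,s(n)}=\max(1,X^c)$ for $X=\rho^{-1/2}|n-\rho|$, so the left side is $\ge 1$ while the right side can be $<1$; you only recover the bound up to a factor $2$ and only when $E[X]\ge 1$ (which suffices for the applications in the paper but should be stated). Second, your justification that from $|N-\rho|\le C\sqrt\rho$ the chain hits $\lfloor\rho\rfloor$ in unit time with probability bounded below---via the stationary visit rate---is heuristic: the long-run rate does not by itself control the finite-time hitting probability from a fixed nearby state, and this is precisely what the paper's natural-scale computation supplies. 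The paper's martingale $e^tM(t)$ is the clean device here; your Lyapunov/Chernoff machinery is heavier but would ultimately reach the same conclusion once these two gaps are filled.
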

Statement (a) makes sense after observing that for large $\lf\rho\rf$, $\rho^{-1/2}(N-\rho)$ resembles an Ornstein-Uhlenbeck diffusion, although it does not follow directly from the invariance principle, since our result holds for all $\rho$ and also handles the tail of the hitting time distribution. Statements (b) and (c) express that $N$ prefers to hit $\lf\rho\rf$ than $0$, and (d) says that from large initial values, $N$ does not hit $0$ for an exponentially long in $\rho$ amount of time, with exponentially small in $\rho$ error probability. The proof of Lemma \ref{lem:countest} can be found in Section \ref{sec:Nest} and is achieved with the help of martingales, Doob's inequality and stopping arguments.\\

Next, we use Lemmas \ref{lem:renprop} and \ref{lem:countest} to establish some properties of the flush, pin and refresh times.

\begin{lemma}\label{lem:renest}
Let $\tau_{\flsh,\lbd},\tau_{\pin,\lbd},\eta_\lbd,\nu_\lbd$ be as in Definition \ref{def:refresh}. For some $c,C>0$ and all $\lbd$,
\enumalph
\item $P(\tau_{\flsh,\lbd}\le t \mid X_\lbd(0)) = (1-e^{-t})^{N_\lbd(0)}\le e^{-N_\lbd(0)e^{-t}}$,\\
$P(\tau_{\flsh,\lbd} > t \mid X_\lbd(0)) = 1 - (1-e^{-t})^{N_\lbd(0)}\le N_\lbd(0)e^{-t}$,
\item $P(\eta_\lbd > \tau_{\flsh,\lbd}\vee (\log(\lbd)/2) + t ) \le C e^{-ct},$
\item $|E_{\nu_\lbd}[\eta_\lbd] - \log\lbd |\le C$,
\item $P( \eta_\lbd > \log(N_\lbd(0)\vee \sqrt{\lbd})+t ) \le Ce^{-ct}$,
\enumend
\end{lemma}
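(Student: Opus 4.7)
The plan is to prove the four statements in order, with (b) carrying the technical weight and (c)--(d) following by combining (a) and (b) with straightforward bookkeeping. For (a), the coupling in the proof of Lemma \ref{lem:renprop} realizes $\tau_{\flsh,\lbd}$ conditioned on $X_\lbd(0)$ as the maximum of $N_\lbd(0)$ i.i.d.~Exp$(1)$ random variables (the $r_i$'s), so its conditional CDF is $(1-e^{-t})^{N_\lbd(0)}$. The upper bound on the CDF follows from $1-x\le e^{-x}$ applied at $x=e^{-t}$, and the upper bound on the tail follows from the elementary inequality $1-(1-x)^n\le nx$ for $x\in[0,1]$.

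For (b), the main step, let $s_\lbd:=\tau_{\flsh,\lbd}\vee(\log\lbd/2)$, a stopping time. By the strong Markov property at $s_\lbd$ and the fact that $s_\lbd\ge\tau_{\flsh,\lbd}$, the shifted count process $(N_\lbd(s_\lbd+t))_{t\ge 0}$ is a dynamic Poisson process with intensity $\lbd$ in the sense of \eqref{eq:dynPois} started at $N_\lbd(s_\lbd)$, and $(\eta_\lbd-s_\lbd)_+$ is dominated by its hitting time of $\lf\lbd\rf$. Lemma \ref{lem:renprop}(a) says that, conditioned on $s_\lbd$, $N_\lbd(s_\lbd)$ is Poisson with mean $\lbd(1-e^{-s_\lbd})$; because $s_\lbd\ge\log\lbd/2$ gives $\lbd e^{-s_\lbd}\le\sqrt{\lbd}$, and because the Poisson standard deviation is likewise at most $\sqrt{\lbd}$, the triangle inequality and Jensen yield $E[\,|N_\lbd(s_\lbd)-\lbd|\mid s_\lbd]\le 2\sqrt{\lbd}$, hence $E[\lbd^{-1/2}|N_\lbd(s_\lbd)-\lbd|]=O(1)$. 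Plugging into Lemma \ref{lem:countest}(a) gives $P(\eta_\lbd>s_\lbd+t+O(1))\le Ce^{-ct}$, and absorbing the additive $O(1)$ into $t$ with adjusted constants delivers (b).

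For (c), initialize at $\nu_\lbd$ so $N_\lbd(0)=\lf\lbd\rf$ deterministically; then $\tau_{\flsh,\lbd}$ is the max of $\lf\lbd\rf$ i.i.d.~Exp$(1)$'s, whose expectation equals the harmonic number $H_{\lf\lbd\rf}=\log\lbd+O(1)$. Integrating the exponential tail in (b) gives $E_{\nu_\lbd}[(\eta_\lbd-s_\lbd)_+]=O(1)$, while $E_{\nu_\lbd}[s_\lbd-\tau_{\flsh,\lbd}]\le(\log\lbd/2)\,P_{\nu_\lbd}(\tau_{\flsh,\lbd}<\log\lbd/2)$, which is exponentially small in $\sqrt{\lbd}$ by (a) and hence $o(1)$. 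Summing yields (c). For (d), a union bound splits $\{\eta_\lbd>\log(N_\lbd(0)\vee\sqrt{\lbd})+t\}$ into $\{\tau_{\flsh,\lbd}>\log(N_\lbd(0)\vee\sqrt{\lbd})+t/2\}$ and $\{\eta_\lbd>s_\lbd+t/2\}$; the latter is $\le Ce^{-ct/2}$ by (b), while the former, conditioned on $X_\lbd(0)$ and controlled via (a), is at most $N_\lbd(0)\,e^{-\log(N_\lbd(0)\vee\sqrt{\lbd})-t/2}\le e^{-t/2}$.

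The principal obstacle is (b): setting up the strong Markov application at the random time $s_\lbd$, using Lemma \ref{lem:renprop}(a) to identify the distribution of $N_\lbd(s_\lbd)$, and verifying the $O(1)$ moment bound that feeds Lemma \ref{lem:countest}(a). The threshold $\log\lbd/2$ in $s_\lbd$ is chosen precisely so that $\lbd e^{-s_\lbd}$ is comparable to the intrinsic $\sqrt{\lbd}$ Poisson fluctuation scale, which is what makes the moment bound a uniform constant.
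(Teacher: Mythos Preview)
Your proof is correct and follows essentially the same route as the paper: identical arguments for (a)--(c), with the same use of Lemma~\ref{lem:renprop}(a) at the stopping time $s_\lbd=\tau_{\flsh,\lbd}\vee(\log\lbd/2)$ to feed the moment bound into Lemma~\ref{lem:countest}(a). The only notable difference is in (d), where the paper introduces $e_1=(\tau_{\flsh,\lbd}-\log N_\lbd(0))_+$ and $e_2=(\eta_\lbd-s_\lbd)_+$, bounds $\eta_\lbd\le e_1+e_2+\log(N_\lbd(0)\vee\sqrt{\lbd})$, and controls the tail of $e_1+e_2$ via exponential moments; your union-bound split into $\{\tau_{\flsh,\lbd}>\log(N_\lbd(0)\vee\sqrt{\lbd})+t/2\}$ and $\{\eta_\lbd>s_\lbd+t/2\}$ is a cleaner shortcut that avoids the moment-generating-function step.
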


\begin{proof}We prove statements (a)-(d) in that order.
\enumalph
\item $\tau_{\flsh,\lbd}$ is the maximum of $N_\lbd(0)$ independent exponential($1$) random variables, from which the cumulative distribution function of $\tau_{\flsh,\lbd}$ is as given. For the first inequality use $1-x\le e^{-x}$. For the second inequality use $(1-x)^n \ge 1-nx$.
\item First, pass to the second moment, using
\[E[ \, |N(\tau_{\flsh})-\lbd| \, ] \le E[\, (N(\tau_{\flsh}) - \lbd)^2\, ]^{1/2}.\]
By Lemma \ref{lem:renprop} (a), conditioned on $\tau_{\flsh,\lbd}$, for $t\ge \tau_\flsh$ $N_\lbd(t)$ is Poisson distributed with mean $\lbd(1-e^{-t})$. In particular,
\begin{align}\label{eq:almostPoisson}
E[\, (N_\lbd(t) - \lbd)^2\, ] 
&= E[\, N_\lbd(t)^2\, ] - 2\lbd E[\, N_\lbd(t) \, ] + \lbd^2 \nonumber \\
&= \lbd^2 (1-e^{-t})^2 + \lbd (1-e^{-t}) - 2\lbd^2(1-e^{-t}) + \lbd^2 \nonumber \\
&= \lbd +  \lbd e^{-t}(\lbd e^{-t}-1).
\end{align}
If $t\ge \log (\lbd)/2$, then $|\lbd e^{-t}(\lbd e^{-t}-1)| \le \lbd$. Therefore

\[E[ \, |N_\lbd(t \vee (\tau_{\flsh,\lbd}\vee \log(\lbd)/2) - \lbd| \, ] \le \sqrt{2\lbd}.\]

Then use Lemma \ref{lem:countest} (a) with $\rho=\lbd$, initialized at time $\tau_{\flsh}\vee \log(\lbd)/2$ instead of $0$; putting the upper bound $\lbd^{-1/2}\sqrt{2\lbd}=\sqrt{2}$ in place of
\[E[ \, \rho^{-1/2} |N(0)-\rho|\, ],\]
and $(\eta_\lbd-\tau_{\flsh,\lbd}\vee\log(\lbd)/2)_+$ in place of $\tau(\lf\rho\rf)$, (b) is obtained except with $t+\log \sqrt{2}$ in place of $t$; to obtain (b), replace $t+\log \sqrt{2}$ with $t$ and $2^{c/2}C$ with $C$.\\

\item If $X_\lbd(0)$ has distribution $\nu_\lbd$ then $N_\lbd(0)=\lf \lbd \rf$, so $\tau_\flsh$ is the maximum of $\lf \lbd\rf$ independent exponential$(1)$ random variables and
\[E_{\nu_\lbd}[\tau_\flsh] = \sum_{i=1}^{\lf \lbd\rf} 1/i = \log(\lbd)+O(1),\]
where the summation comes from examining the process that gives the number of initial particles that still remain, which is a Markov chain on $\N$ with transitions $n\to n-1$ at rate $n$, for which the expected time to reach $0$ from $n$ is the sum of the expected waiting times $1/n+1/(n-1) + \dots 1/1$ in states $n,n-1,\dots,1$. To obtain (c) it remains to show that $E_{\nu_\lbd}[\,\eta_\lbd-\tau_{\flsh,\lbd} \,]=O(1)$. Noting that
\begin{align*}
0 \le \eta_\lbd - \tau_{\flsh,\lbd} &\le (\eta_\lbd - \tau_{\flsh,\lbd} \vee (\log(\lbd)/2))_+ \\
&+ \1(\tau_{\flsh,\lbd} < \log(\lbd)/2)\log(\lbd)/2,
\end{align*}
and taking expected values,
\begin{align}\label{eq:flshlog}
&| E_{\nu_\lbd}[ \, \eta_\lbd  - \tau_{\flsh,\lbd} \,] - E_{\nu_\lbd} [ \, (\eta_\lbd - \tau_{\flsh,\lbd} \vee (\log(\lbd)/2))_+\,] | \nonumber \\
&\le \frac{\log\lbd}{2} P_{\nu_\lbd}(\tau_{\flsh,\lbd} \le \log(\lbd)/2).
\end{align}
If $X_\lbd(0)$ has distribution $\nu_\lbd$ then $N_\lbd(0)=\lf\lbd\rf$, so using the first part of (a),\\
\[P(\tau_{\flsh,\lbd} \le \log(\lbd)/2) \le e^{-\lf\lbd\rf /\sqrt{\lbd}}\le e^{1 - \sqrt{\lbd}}\]
if $\lbd\ge 1$, and the right-hand side of \eqref{eq:flshlog} is $O( e^{-\sqrt{\lbd}}\log \lbd)=o(1)$. Then, from (b),
\begin{align*}
&E[(\eta_\lbd-\tau_{\flsh,\lbd}\vee(\log(\lbd)/2))_+] \\
&\le \int_0^\infty P(\eta_\lbd > (\tau_{\flsh,\lbd}\vee (\log(\lbd)/2)+t)dt \le C/c=O(1),
\end{align*}
which completes the demonstration of (c).\\

\item Let $e_1 = (\tau_{\flsh,\lbd} - \log N_\lbd(0))_+$ and $e_2 = (\eta_\lbd - \tau_{\flsh,\lbd} \vee \log(\lbd) / 2)_+$, then
\begin{align*}
\eta_\lbd &\le e_2 + \tau_{\flsh,\lbd} \vee \log(\lbd)/2 \\
&= e_2 + (e_1 + \log N_\lbd(0))\vee \log\sqrt{\lbd} \\
&\le e_2 + e_1 + \log (N_\lbd(0)\vee \sqrt{\lbd}).
\end{align*}
Taking $t_0=\log N_\lbd(0)+t$ in the second part of (a) gives $P(e_1 > t) \le e^{-t}$, then using (b), $P(e_2>t \mid e_1)\le Ce^{-ct}$ (the bound does not depend on the value of $\tau_{\flsh,\lbd}$). For $\theta<c\wedge 1$ it follows that $E[e^{\theta (e_1+e_2)}]<\infty$, then
\[P(\eta_\lbd > \log(N_\lbd(0)\vee \sqrt{\lbd})+t) \le P(e_1+e_2>t) \le e^{-\theta t} E[e^{\theta (e_1+e_2)}],\]
and after relabelling constants, this gives (d).
\enumend
\end{proof}

\section{Proof of Theorem \ref{thm:main}}\label{sec:proof}

The proof of Theorem \ref{thm:main} is broken up into two parts. The first part, Proposition \ref{prop:iv}, handles the initial distribution. The second part proves Theorem \ref{thm:main}, assuming the initial distribution is the regeneration measure $\nu_\lambda$.

\begin{proposition}\label{prop:iv}
Let $X_\lambda$ be the dynamic Poisson point process on $[0,1]$ with intensity $\lambda$, with regeneration time $\eta_\lambda$ as defined above and let $N_\lbd = |X_\lbd|$ be its cardinality. Suppose $\log N_\lbd(0)= O(\lbd)$ and $X_\lambda(0)$ is $\alpha$-dense at scale $w_\lambda/2$, for some $\alpha>0$ and every $\lbd$. If $\liminf_\lbd \lbd w_\lbd/\log\lbd$ and $C>0$ are large enough then
\enumar
\item $P(\eta_\lambda \le C\lbd) \to 1 \ \ \text{as} \ \ \lambda\to\infty$ and
\item $P(\eta_\lambda < \tau_{A(w_\lambda)}) \to 1$ as $\lambda\to\infty$.

\enumend
\end{proposition}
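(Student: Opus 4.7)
The plan is to write
\[P(\tau_{A(w_\lbd)}<\eta_\lbd)\le P(\eta_\lbd>T_\lbd)+P(\tau_{A(w_\lbd)}<T_\lbd)\]
for a deterministic threshold $T_\lbd=C\lbd$, with the first summand controlled by the tail bound in Lemma \ref{lem:renest}(d) to yield statement (1), and the second summand handled by a grid-based discretization combined with Lemma \ref{lem:countest}(d) to yield statement (2).

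For (1) I would apply Lemma \ref{lem:renest}(d) directly. Using the hypothesis $\log N_\lbd(0)\le C_1\lbd$, we have $\log(N_\lbd(0)\vee\sqrt\lbd)\le C_1\lbd+\log\lbd$, so taking $t=(C-C_1)\lbd-\log\lbd$ for any fixed $C>C_1$ gives
\[P(\eta_\lbd>C\lbd)\le Ce^{-c((C-C_1)\lbd-\log\lbd)}\to 0.\]

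The key idea for (2) is a simple discretization: partition (the relevant portion of) $[0,1]$ into disjoint intervals $I_j:=(jw_\lbd/2,(j+1)w_\lbd/2)$ of length $w_\lbd/2$, producing $K=O(1/w_\lbd)$ such intervals. The crucial geometric observation is that any window $(y,y+w_\lbd)\subset[0,1]$ entirely contains some $I_j$, since its length is twice the mesh. Hence $\{\tau_{A(w_\lbd)}<T_\lbd\}\subseteq\bigcup_j\{\sigma_j<T_\lbd\}$, where $\sigma_j:=\inf\{t:X_\lbd(t)\cap I_j=\emptyset\}$. Because the $I_j$ are pairwise disjoint, by Poisson thinning of the arrival process and the independence of particle lifetimes the counts $N_j(t):=|X_\lbd(t)\cap I_j|$ are independent dynamic Poisson processes in the sense of \eqref{eq:dynPois} with intensity $\rho:=\lbd w_\lbd/2$, and the $\alpha\lbd$-density hypothesis on $X_\lbd(0)$ gives $N_j(0)\ge\alpha\rho$. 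Lemma \ref{lem:countest}(d) then yields $P(\sigma_j\le e^{c\rho})\le Ce^{-c\rho}$, and a union bound gives $P(\tau_{A(w_\lbd)}<T_\lbd)\le KCe^{-c\rho}$ provided $T_\lbd\le e^{c\rho}$.

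The main obstacle is synchronizing constants so that $T_\lbd=C\lbd$ lies comfortably below $e^{c\rho}$ while $KCe^{-c\rho}\to 0$. Using condition (a) of Theorem \ref{thm:main}, $\rho\ge(C_0/2)\log\lbd$ where $C_0\le\liminf_\lbd\lbd w_\lbd/\log\lbd$, so $e^{c\rho}\ge\lbd^{cC_0/2}$ and $KCe^{-c\rho}=O((\lbd/\log\lbd)\cdot\lbd^{-cC_0/2})$. Choosing $C_0$ large enough that $cC_0/2>1$ (with a small safety margin absorbing the $\lbd/\log\lbd$ factor and allowing $T_\lbd=C\lbd\le\lbd^{cC_0/2}$), together with $C>C_1$, makes both estimates vanish simultaneously; this is precisely the ``large enough'' qualifier in the hypotheses of the proposition.
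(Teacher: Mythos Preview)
Your proposal is correct and essentially identical to the paper's argument: for (1) you invoke Lemma~\ref{lem:renest}(d) directly (the paper instead combines parts (a) and (b) of that lemma, but (d) is derived from those, so this is only a packaging difference), and for (2) your grid-based discretization plus Lemma~\ref{lem:countest}(d) is exactly the content of Lemma~\ref{lem:denstime}, which the paper invokes and then pairs with statement (1) via the same threshold comparison $C\lbd\le e^{c\rho}$.
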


\begin{proposition}\label{prop:main}
Let $X_\lambda$ be the dynamic Poisson point process on $[0,1]$ with intensity $\lambda$, with regeneration measure $\nu_\lambda$ as defined above. If $X_\lambda(0)$ has distribution $\nu_\lambda$ and conditions (a)-(b) of Theorem \ref{thm:main} hold (for large enough $C>0$ in (a)) then statements (i) and (ii) of Theorem \ref{thm:main} hold.
\end{proposition}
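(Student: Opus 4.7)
The plan is to deduce statement (i) from Proposition \ref{prop:exp} and statement (ii) from Proposition \ref{prop:tauex}, combined with Proposition \ref{prop:mustat} and the asymptotic identifications $q_\exit(A(w_\lbd))\sim \lbd w_\lbd\pi_\lbd(A(w_\lbd))$ and $\pi_\lbd(A(w_\lbd))\sim \lbd(1-w_\lbd)e^{-\lbd w_\lbd}$ from Lemmas \ref{lem:exitest} and \ref{lem:piAw}. By Lemma \ref{lem:renprop}, $\nu_\lbd$ is the law of $\lf\lbd\rf$ i.i.d.~uniform points on $[0,1]$, so $N_\lbd(0)=\lf\lbd\rf$ deterministically, $\log N_\lbd(0)=O(\lbd)$ trivially, and a standard Bernoulli concentration estimate shows $\nu_\lbd$ is $\alpha$-dense at scale $w_\lbd/2$ with probability $\to 1$ provided $C$ in (a) is taken large enough. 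Hence the hypotheses of Proposition \ref{prop:iv} are satisfied for $\nu_\lbd$.

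For the hypotheses of Proposition \ref{prop:exp}, condition (i) ($p_\lbd\to 0$) is immediate from Proposition \ref{prop:iv}(2) applied to $\nu_\lbd$. For conditions (ii) and (iii), use Lemma \ref{lem:renprop}(c): the successive regeneration waiting times $w_{\lbd,n}^\ren$ are i.i.d.~under $P_{\nu_\lbd}$. By Lemma \ref{lem:renest}(c)--(d) they have mean $\log\lbd+O(1)$ and uniform sub-exponential tails past $\log\sqrt{\lbd}$, hence variance $O(1)$. Chebyshev yields $L^2$ convergence of $t_{\lbd,n}^\ren/(nE[w_{\lbd,1}^\ren])$ to $1$ uniformly in $\lbd$, giving the uniform integrability in (ii) whenever $n_\lbd(\dlt_\lbd)\to\infty$; the latter holds from $n_\lbd(\dlt)\ge \dlt/p_\lbd$ if $\dlt_\lbd \to 0$ slowly enough. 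For (iii), one-dependence (Lemma \ref{lem:onedep}) makes all but one of the $n_\lbd(\dlt)$ waiting times conditionally i.i.d.~with the unconditional law, contributing $\sim n_\lbd(\dlt)\log\lbd$, while the single cycle containing the hit contributes only $O(\log\lbd)$ in expectation by Lemma \ref{lem:renest}(d), which is negligible.

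For statement (ii) I would apply Proposition \ref{prop:tauex} with stationary distribution $\pi_\lbd$; ergodicity is supplied by Lemma \ref{lem:dynerg}. Condition (iii) ($\pi_\lbd(A(w_\lbd))\to 0$) is Lemma \ref{lem:piAw}. Condition (ii) follows by comparing $E_{\mu_\lbd}[\tau_\exit(A(w_\lbd))]=1/q_\exit(A(w_\lbd))$ (from Proposition \ref{prop:mustat}, whose uniform integrability hypothesis follows from the $L^2$ estimate above), which by Lemmas \ref{lem:exitest} and \ref{lem:piAw} is asymptotic to $e^{\lbd w_\lbd}/(\lbd^2 w_\lbd(1-w_\lbd))$, against $E_{\mu_\lbd}[\eta_\lbd]=O(\log\lbd)$ from Lemma \ref{lem:renest}(d). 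Condition (i) would be obtained by applying Proposition \ref{prop:iv} once more but to initial distribution $\mu_\lbd$. Combining the two conclusions yields $E_{\nu_\lbd}[\tau(A(w_\lbd))]\sim E_{\mu_\lbd}[\tau_\exit(A(w_\lbd))] \sim e^{\lbd w_\lbd}/(\lbd^2 w_\lbd(1-w_\lbd))$, which is (ii), while Proposition \ref{prop:exp} yields (i).

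The main obstacle is condition (i) of Proposition \ref{prop:tauex}, since the stationary exit distribution $\mu_\lbd$ is supported on configurations of the form $X\cup\{u\}$ with $X\in A(w_\lbd)$ and $u$ placed in a large gap of $X$ so that $X\cup\{u\}\notin A(w_\lbd)$. Such configurations retain sub-gaps of size up to just below $w_\lbd$, so they typically are \emph{not} $\alpha$-dense at scale $w_\lbd/2$, and a direct invocation of Proposition \ref{prop:iv} with initial distribution $\mu_\lbd$ cannot be expected to go through. Resolving this will likely require either a refined density condition that distinguishes between the former gap region and the bulk, or replacing $\mu_\lbd$ by a companion distribution obtained by first evolving forward a short time so that the inserted particle loses its special role before density is quantified.
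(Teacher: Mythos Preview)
Your overall strategy coincides with the paper's: statement (i) via Proposition \ref{prop:exp} (the paper packages the verification of its hypotheses as Lemma \ref{lem:prop1cond}, using the excursion principle Lemma \ref{lem:excprinc} rather than Proposition \ref{prop:iv} for $p_\lbd\to 0$, but either route works), and statement (ii) via Propositions \ref{prop:mustat}--\ref{prop:tauex} together with Lemmas \ref{lem:exitest} and \ref{lem:piAw}. You have also correctly located the real obstacle.

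Your first suggested fix is the right one and is exactly what the paper does in Lemma \ref{lem:prop3cond}. With $I=(v_\ell,v_r)$ the first gap of width $\ge w_\lbd$, the \emph{left $I$ contraction} $^IX$ (Definition \ref{def:leftcont}) collapses the gap; Lemma \ref{lem:piAw}(2) shows that under $\tilde\pi_\lbd(\cdot\mid A(w_\lbd))$ this contraction stochastically dominates a Ppp$(\lbd)$ on $\R_+$, hence is $\lbd\alpha$-dense at scale $w_\lbd/2$ with high probability (Lemma \ref{lem:alphadense}), and Lemma \ref{lem:denstime} then controls the contracted process. A bounded density of the biased measure $\xi_\lbd$ relative to $\pi_\lbd(\cdot\mid A(w_\lbd))$ (coming from $q_\lbd(Y,A(w_\lbd)^c)\le \lbd w_\lbd\sim q_{\exit,\lbd}/\pi_\lbd(A(w_\lbd))$) transfers this to $\mu_\lbd$. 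The former gap region is handled separately by subdividing $(v_\ell,v_r)$ into four sub-intervals of length $\le w_\lbd/2$, waiting for each to receive its first particle (which happens before the inserted point $v_m$ dies with probability $\to 1$, by comparing exponential rates $\ge\lbd w_\lbd/4$ versus $1$), and then applying Lemma \ref{lem:countest}(b),(d) to each sub-count.

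Two secondary gaps in your sketch. First, the uniform integrability hypothesis of Proposition \ref{prop:mustat} concerns the exit-time sequence $(\tau_n(A(w_\lbd),A(w_\lbd)^c)/n)_n$ under $\mu_\lbd$, not the regeneration-time sequence under $\nu_\lbd$; your $L^2$ estimate for the latter does not transfer, and the paper supplies a separate dominating sequence inside Lemma \ref{lem:exitest}. Second, $E_{\mu_\lbd}[\eta_\lbd]=O(\log\lbd)$ cannot be read off Lemma \ref{lem:renest}(d) alone since $N_\lbd(0)$ is random under $\mu_\lbd$; the paper controls its tail via the density bound $d\mu_\lbd/d\pi_\lbd\big|_{S_n}\le (\lbd+n)/q_{\exit,\lbd}$ of Lemma \ref{lem:numudens}.
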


Theorem \ref{thm:main} is easily proved from the above propositions.
\begin{proof}
    Suppose $X_\lambda(0)$ is $\alpha$-dense at scale $w_\lbd/2$ and $\log |X_\lbd(0)| = O(\lbd)$.  
    By definition of $\eta_\lambda, \nu_\lambda$, $X_\lambda(\eta_\lambda)$ has distribution $\nu_\lambda$, and using 1, 2 of Proposition \ref{prop:iv}, with probability tending to $1$ as $\lbd\to\infty$, 
    \enumar 
    \item $\eta_\lambda = O(\lbd)$ which for $\liminf_\lbd \lbd w_\lbd/\log\lbd =: \gamma > 4$ is $o(E_{\nu_\lbd}[\tau_{A(w_\lbd)}])$, since by statement (ii) of Proposition \ref{prop:main} and $e^x/x\ge e^{x/2}$ for large $x$, $E_{\nu_\lbd}[\tau_{A(w_\lbd)}] \sim e^{\lbd w_\lbd} / (\lbd^2 w_\lbd(1-w_\lbd))\ge e^{\lbd w_\lbd/2}/\lbd = \lbd^{1+\dlt}$ for large $\lbd$ with $\dlt=\gamma/2-2>0$, and
    \item $X_\lambda(t) \notin A(w_\lambda)$ for all $t\le \eta_\lambda$.
    \enumend
    Statement 1 implies the time interval $[0,\eta_\lbd]$ is of vanishing size after rescaling time by $E_{\nu_\lbd}[\tau_{A(w_\lbd)}]$ and statement 2 implies that $A(w_\lbd)$ is unlikely to occur in that time interval. Using the strong Markov property at time $\nu_\lambda$ and Proposition \ref{prop:main}, Theorem \ref{thm:main} follows.
\end{proof}

The rest of this section is devoted to the results needed to prove Propositions \ref{prop:iv} and \ref{prop:main}, and finishes with the proof of Propositions \ref{prop:iv} and \ref{prop:main}. Here is a list of the required results:

\enumar
\item Lemma \ref{lem:alphadense}: showing that a Ppp is $\alpha$-dense (see Definition \ref{def:alphadense}) at the required scale.
\item Lemma \ref{lem:piAw}: estimating the stationary probability $\pi(A(w_\lbd))$ of the gap event and the conditional distribution $\pi_\lbd(\cdot \mid A(w_\lbd))$.
\item Lemma \ref{lem:numudens}: bounding the density function of the regeneration measure $\nu_\lbd$ and the stationary exit-from-$A(w_\lbd)$ distribution $\mu_\lbd$ with respect to $\pi_\lbd$.
\item Lemma \ref{lem:prop1cond}: verifying the conditions of Proposition \ref{prop:exp} for the dynamic Ppp,
\item Lemma \ref{lem:exitest}: verifying the conditions of Proposition \ref{prop:mustat} and estimating the stationary exit rate $q_\exit(A(w_\lambda))$ in terms of $\pi_\lbd$,
\item Lemma \ref{lem:denstime}: lower bound on the hitting time $\tau_{A(w_\lbd)}$ of the gap event from an initial configuration that is $\alpha$-dense at scale $w_\lbd/2$,
\item Lemma \ref{lem:prop3cond}: verifying the conditions of Proposition \ref{prop:tauex} for the dynamic Ppp.
\enumend

\begin{lemma}\label{lem:alphadense}
Let $X$ be a Poisson point process on $[0,1]$ with intensity $\lbd$. For each $\alpha\in (0,1/2)$ there is $C>0$ such that if $\liminf_\lbd \lbd w_\lbd / \log\lbd \ge C$ then
\[P(X \ \text{is} \ \lbd\alpha\text{-dense at scale} \ w_\lbd) \to 1 \ \text{as} \ \lbd\to\infty.\]
\end{lemma}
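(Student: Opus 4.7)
The plan is to discretize the family of test intervals, then combine a Chernoff bound for the Poisson count in each discretized interval with a union bound.

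Concretely, partition $[0,1]$ into disjoint intervals $I_k = (k w_\lbd/2, (k+1) w_\lbd/2)$ for $k = 0, 1, \dots, K_\lbd - 1$, with $K_\lbd = \lf 2/w_\lbd\rf$. The key geometric observation is that any open interval $(y,y+w_\lbd)$ with $y \in [0,1-w_\lbd]$ contains a full $I_k$: take $k=\lceil 2y/w_\lbd\rceil$, then $y \le kw_\lbd/2 < (k+1)w_\lbd/2 \le y+w_\lbd$. Hence
\[|X\cap (y,y+w_\lbd)| \ge \min_{0\le k\le K_\lbd-1} |X\cap I_k|,\]
so it suffices to show that with high probability, every $I_k$ satisfies $|X\cap I_k|\ge \lbd\alpha w_\lbd$.

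Each $|X\cap I_k|$ is Poisson with mean $\mu_\lbd := \lbd w_\lbd/2$, and we want $|X\cap I_k|\ge 2\alpha \mu_\lbd$. Since $2\alpha<1$ by hypothesis, the Chernoff bound for a Poisson random variable $Y\sim\text{Pois}(\mu)$ gives
\[P(Y\le \beta \mu)\le e^{-\mu I(\beta)}, \quad I(\beta):=1-\beta+\beta\log\beta>0 \text{ for } \beta\in(0,1),\]
applied with $\beta=2\alpha$. Therefore $P(|X\cap I_k|<\lbd\alpha w_\lbd)\le e^{-\mu_\lbd I(2\alpha)}=e^{-\lbd w_\lbd I(2\alpha)/2}$, and a union bound over the $K_\lbd\le 2/w_\lbd$ intervals yields
\[P(X\text{ is not }\lbd\alpha\text{-dense at scale }w_\lbd) \le \frac{2}{w_\lbd} e^{-\lbd w_\lbd I(2\alpha)/2}.\]

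Finally, using $\liminf_\lbd \lbd w_\lbd/\log\lbd\ge C$ and $1/w_\lbd\le \lbd$, this bound is at most $2\lbd\cdot \lbd^{-CI(2\alpha)/2+o(1)}$, which tends to $0$ provided $C>2/I(2\alpha)$. The only step requiring any care is choosing the grid spacing small enough relative to $\alpha$: spacing $w_\lbd/2$ works because $2\alpha<1$; for $\alpha\in(1/2,1)$ one would instead use a finer grid of spacing $(1-\alpha-\epsilon)w_\lbd$, but the stated range $\alpha\in(0,1/2)$ makes the spacing $w_\lbd/2$ sufficient. There is no real obstacle — the argument is a standard Chernoff-plus-union-bound, and the assumption on $\lbd w_\lbd/\log\lbd$ is exactly what is needed to defeat the $O(1/w_\lbd)$ factor from the union bound.
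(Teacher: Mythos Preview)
Your proposal is correct and follows essentially the same approach as the paper: partition $[0,1]$ into half-width intervals $I_k$, apply a large-deviations/Chernoff bound to the Poisson count in each, and take a union bound over the $O(1/w_\lbd)$ intervals. Your version is slightly more explicit about the Chernoff rate function and the resulting choice of $C$, but the structure and ideas are identical.
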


\begin{proof}
For $k=0,\dots, \lf 2/w\rf-1$ let $I_k=(kw_\lbd/2,(k+1)w_\lbd/2)$, then for each $\alpha<1/2$, by a standard large deviations estimate $P(X(I_k) < \alpha \lbd w_\lbd) \le C_1e^{-c\lbd w_\lbd} $ where $c,C_1>0$ depend on $\alpha$ but not $\lbd$. Taking a union bound, if $\lbd w_\lbd \ge (1/c) \log \lbd$ then
\[P(\min_k X(I_k) < \alpha \lbd w_\lbd) \le (2C/w_\lbd)e^{-c\lbd w_\lbd}\le \frac{2C\lbd}{a\log\lbd}\lbd^{-1}=o(1).\]
Since each $I_k$ has length $w_\lbd/2$ and $I_1\subset (0,w)$ while $I_{\lf 2/w\rf-1} \subset (1-w,1)$, for any $y\in [0,1-w]$, $(y,y+w) \supset I_k$ for some $k$, so
\[P(X \ \text{is} \ \alpha\text{-dense at scale} \ w_\lbd) \ge 1 - P(\min_k X(I_k) < \alpha \lbd w_\lbd) \to 1 \ \text{as} \ \lbd\to\infty.\]
\end{proof}

We will need the following definition.

\begin{definition}[Left contraction]\label{def:leftcont}
For $X\subset \R$ and $y\in \R$ let $X-y=\{x-y\colon x\in X\}$ and let $I=(a,b)$ be an open interval. The \emph{left $I$ contraction} of $X$, denoted $^IX$, is the set $(X \cap (-\infty,a]) \cup ((X \cap (b,\infty)) - (b-a))$. 
\end{definition}

For the next result it's slightly more convenient to work with a Ppp on all of $\R_+$; to distinguish it from $\pi_\lbd$, the stationary distribution for the dynamic Ppp, we'll denote it $\tilde \pi_\lbd$.

\begin{lemma}\label{lem:piAw}
Let $\pi,\tilde \pi_\lbd$ be the measure of a Poisson point process (Ppp) $X$ on $[0,1]$, $\R_+$ respectively with intensity $\lambda$. Let $\lambda\to\infty$ and let $w_\lbd$ be such that $\liminf_\lbd\lambda w_\lbd / \log\lbd>1$ and $\limsup_\lbd w_\lbd < 1$. With $A(w_\lbd)$ as in \eqref{eq:Aw},
\begin{align}\label{eq:piAw}
\pi(A(w_\lbd))=\tilde \pi_\lbd(A(w_\lbd)) \sim \lambda(1-w_\lbd)e^{-\lambda w_\lbd}.
\end{align}
Let $v_\ell=\inf\{x\colon X\cap(x,x+w_\lbd)=\emptyset\}$ and $v_r=\inf (X \cap (v_\ell,\infty))$. Then with respect to $\tilde \pi_\lbd$, 
\enumar
\item conditioned on $v_\ell$, $v_r-v_\ell-w_\lbd$ has distribution exponential$(\lbd)$ and
\item conditioned on $A(w_\lbd)$, with $I:=(v_\ell,v_r)$, the left $I$ contraction of $X$ \\ stochastically dominates a Ppp on $\R_+$ with intensity $\lbd$.
\enumend
\end{lemma}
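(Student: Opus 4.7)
The plan is to work throughout with $\tilde\pi_\lbd$ (noting $\pi(A(w_\lbd))=\tilde\pi_\lbd(A(w_\lbd))$ since $A(w_\lbd)$ depends only on $X\cap[0,1]$) and represent $X$ by its iid $\mathrm{Exp}(\lbd)$ gap sequence $(Y_n)_{n\ge 1}$ and partial sums $(x_n)_{n\ge 0}$ with $x_0=0$, so that $v_\ell=x_{N-1}$ and $v_r=x_N$ for $N:=\inf\{n:Y_n\ge w_\lbd\}$. Statement (1) is then immediate: conditional on $N$, $Y_N$ has the law of $\mathrm{Exp}(\lbd)$ truncated to $[w_\lbd,\infty)$ and is independent of $v_\ell$ (a function of $(Y_k)_{k<N}$), so memorylessness gives $v_r-v_\ell-w_\lbd=Y_N-w_\lbd\sim\mathrm{Exp}(\lbd)$.

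For the asymptotic, I would apply a first-and-second-moment argument to
\[Z:=\sum_{i\ge 0}\1(x_i\le 1-w_\lbd,\ Y_{i+1}\ge w_\lbd),\]
which counts qualifying gap starts in $[0,1-w_\lbd]$ and satisfies $A(w_\lbd)=\{Z\ge 1\}$. Independence of $Y_{i+1}$ from $(Y_k)_{k\le i}$ together with the renewal identity $\sum_{i\ge 0}P(x_i\le 1-w_\lbd)=1+\lbd(1-w_\lbd)$ yields $E[Z]=(1+\lbd(1-w_\lbd))e^{-\lbd w_\lbd}$. For the second moment, conditioning on $Y_{i+1}\ge w_\lbd$ and using memorylessness writes $x_j$ for $j>i$ as $x_i+w_\lbd+\Gamma(j-i,\lbd)$ in law, so the joint probability reduces to $e^{-2\lbd w_\lbd}\,P(N(1-2w_\lbd)\ge j-i)$ with $N(t)\sim\mathrm{Poisson}(\lbd t)$; summing over pairs gives $E[Z(Z-1)]=O(\lbd^2 e^{-2\lbd w_\lbd})$. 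The hypothesis $\liminf_\lbd\lbd w_\lbd/\log\lbd>1$ forces $\lbd e^{-\lbd w_\lbd}\to 0$, so $E[Z(Z-1)]=o(E[Z])$, and the Bonferroni bounds $E[Z]-E[Z(Z-1)]/2\le P(Z\ge 1)\le E[Z]$ deliver the claimed asymptotic.

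For statement (2), I write $X'=(X\cap[0,v_\ell])\cup((X\cap(v_r,\infty))-(v_r-v_\ell))$; applying the strong Markov property at $v_r$ gives that $(X\cap(v_r,\infty))-v_r$ is an independent $\mathrm{Ppp}(\lbd)$ on $\R_+$, so the right half of $X'$ is an independent $\mathrm{Ppp}(\lbd)$ on $(v_\ell,\infty)$. Thus it suffices, conditional on $v_\ell$, to prove that $X\cap[0,v_\ell]$ stochastically dominates $\mathrm{Ppp}(\lbd)$ on $[0,v_\ell]$. Conditional on $v_\ell=s$, the gaps of $X\cap[0,s]$ are iid $\mathrm{Exp}(\lbd)$ truncated to $[0,w_\lbd)$ conditioned on summing to $s$, which by memorylessness is uniform on $\{y\in[0,w_\lbd)^m:\sum y_k=s\}$ once the count $m$ is fixed. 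For $s\le w_\lbd$ the truncation is vacuous, and a Dirichlet symmetry computation identifies $|X\cap[0,s]|-1\sim\mathrm{Poisson}(\lbd s)$ with non-terminal points iid uniform on $[0,s]$, so $X\cap[0,s]$ equals $\mathrm{Ppp}(\lbd)$ on $[0,s]$ augmented by the terminal point $\{s\}$ and trivially dominates. The main obstacle is the case $s>w_\lbd$ where truncation binds; for this I would use the pointwise coupling $\tilde Y_k\le U_k$ between truncated and untruncated exponentials (setting $\tilde Y_k=U_k$ when $U_k<w_\lbd$ and otherwise drawing $\tilde Y_k$ independently from the truncated law) to propagate dominance through partial sums and hence counting functions, and handle the $\sum=s$ conditioning together with the extra terminal point at $s$ via a Palm/renewal-type argument preserving the dominance.
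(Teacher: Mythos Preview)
Your arguments for statement 1 and for the asymptotic $\pi(A(w_\lbd))\sim\lbd(1-w_\lbd)e^{-\lbd w_\lbd}$ are correct. For the asymptotic you take a genuinely different route from the paper: the paper writes $A(w)=\{N(1-w)\ge k_w-1\}$ via the thinning Lemma~\ref{lem:iidthin}, localizes $N(1-w)$ by Chebyshev, and controls the far tail by a large deviations bound; your first/second moment argument on $Z$ is more elementary and gets there faster, at the cost of not producing the thinning representation that the paper reuses for statement~2.

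Statement 2 has a real gap. Your treatment of the case $v_\ell=s\le w_\lbd$ is correct and pretty, but this case is not the generic one since $v_\ell$ ranges over $[0,1-w_\lbd]$. For $s>w_\lbd$ you propose the natural coupling $\tilde Y_k\le U_k$ between truncated and untruncated exponentials, which is fine unconditionally, but the domination you need is \emph{after} conditioning on $v_\ell=s$, i.e.\ on $\{N-1=m,\ \sum_{k\le m}\tilde Y_k=s\}$. That conditioning is not monotone in the $(\tilde Y_k)$ and there is no reason the pointwise inequality $\tilde Y_k\le U_k$ survives it; your appeal to ``a Palm/renewal-type argument preserving the dominance'' is where the missing idea sits. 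The paper avoids this entirely by working with hazard rates: Lemma~\ref{lem:ratedom} says it suffices that each gap $z_n$, conditional on $z_1,\dots,z_{n-1}$ \emph{and} on $A(w_\lbd)=\{N(1-w)\ge k_w-1\}$, has hazard rate at least $\lbd$. This follows because the unconditional truncated hazard $\lbd/(1-e^{-\lbd(w-y)})\ge\lbd$, and conditioning further on $\{N(1-w)\ge k_w-1\}$ only decreases the survival probability (the event becomes less likely as $z_n$ increases), which can only raise the hazard. That monotonicity-of-the-conditioning-event step is the key device you are missing; once you see it, the hazard-rate formulation handles all $s$ uniformly and the $\sum=s$ conditioning never appears.
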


\begin{lemma}\label{lem:numudens}
Let $\nu_\lbd$ be as in Lemma \ref{lem:renprop} and $\mu_\lbd=\mu_{A(w_\lbd)}$ be as given in  \eqref{eq:muAdef}, i.e., with $q_\lbd(X,dY)$ the transition rate kernel of the dynamic Ppp with intensity $\lbd$ on $[0,1]$, for $B\subset S$,
\[\mu_\lbd(B) := \frac{\int_{X \in A(w_\lbd)} \pi_\lbd(dX) q_\lbd(X,B)}{q_{\exit,\lbd}(A(w_\lbd))},\]
where $q_{\exit,\lbd}(A(w_\lbd)):= \int_{X \in A(w_\lbd)} \pi_\lbd(dX)q_\lbd(X,A(w_\lbd)^c)$. For each $\lbd$, both $\nu_\lbd$ and $\mu_\lbd$ are absolutely continuous with respect to $\pi_\lbd$. Moreover, letting $S_n=\{X \in S\colon |X|=n\}$, the density functions satisfy, $\pi_\lbd$-almost surely,
\[\frac{d\nu_\lbd}{d\pi_\lbd} \le \lf\lbd\rf!e^{\lf\lbd\rf}/ \lbd^{\lf\lbd\rf}, \ \ \frac{d\mu_\lbd}{d\pi_\lbd}\bigg|_{S_n} \le \frac{\lbd+n}{q_{\exit,\lbd}(A(w_\lbd))}.\]
\end{lemma}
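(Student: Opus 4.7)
The plan is direct computation of both Radon--Nikodym derivatives, using the disintegration $\pi_\lbd(S_n)=e^{-\lbd}\lbd^n/n!$ with conditional law on $S_n$ equal to that of $n$ i.i.d.\ uniforms on $[0,1]$. For $\nu_\lbd$, which is concentrated on $S_{\lf\lbd\rf}$ with this same conditional law, the density on $S_{\lf\lbd\rf}$ is simply the reciprocal of the mass $\pi_\lbd$ puts there:
\[\frac{d\nu_\lbd}{d\pi_\lbd}\bigg|_{S_{\lf\lbd\rf}} \;=\; \frac{1}{\pi_\lbd(S_{\lf\lbd\rf})} \;=\; \frac{\lf\lbd\rf!\,e^\lbd}{\lbd^{\lf\lbd\rf}},\]
from which the stated bound follows up to the factor $e^{\lbd-\lf\lbd\rf}\le e$.

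\medskip

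For $\mu_\lbd$, fix $B\subset S_n$ and split the integrand $q_\lbd(X,B)$ in the numerator by the type of transition: additions (from $|X|=n-1$, rate $\lbd\,du$ in the position $u$ of the added point) and removals (from $|X|=n+1$, rate $1$ per existing point). After bounding $\mathbf{1}_{A(w_\lbd)}(X)\le 1$, each integral collapses to a multiple of $\pi_\lbd(B)$ via Poisson-consistency: if $X\sim\pi_\lbd(\cdot\mid S_{n-1})$ and $u\sim U[0,1]$ are independent, then $X\cup\{u\}\sim\pi_\lbd(\cdot\mid S_n)$; and by exchangeability of the $n+1$ i.i.d.\ uniforms representing $\pi_\lbd(\cdot\mid S_{n+1})$, removing any fixed one also yields $\pi_\lbd(\cdot\mid S_n)$. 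Using the stratum ratios $\pi_\lbd(S_{n-1})/\pi_\lbd(S_n)=n/\lbd$ and $\pi_\lbd(S_{n+1})/\pi_\lbd(S_n)=\lbd/(n+1)$, the addition integral evaluates to $n\,\pi_\lbd(B)$ and the removal integral (via the $(n+1)$-fold symmetric sum) to $\lbd\,\pi_\lbd(B)$; summing and dividing by $q_{\exit,\lbd}(A(w_\lbd))$ gives the stated density bound on $S_n$.

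\medskip

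The argument is essentially bookkeeping across cardinality strata with no real obstacle; the only non-trivial identity is the exchangeability computation for the removal term. A minor observation worth flagging: from $X\in A(w_\lbd)$, removing a point can only enlarge gaps, so $X\setminus\{x\}\in A(w_\lbd)$ and the removal term in fact contributes zero to $\mu_\lbd(B)$ when $B\subset A(w_\lbd)^c$. Retaining it is therefore loose, but produces the cleaner bound $(\lbd+n)/q_{\exit,\lbd}(A(w_\lbd))$ stated in the lemma.
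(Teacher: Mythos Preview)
Your argument is correct. For $\nu_\lbd$ your approach coincides with the paper's, and you have correctly noted that the exact density is $\lf\lbd\rf!\,e^{\lbd}/\lbd^{\lf\lbd\rf}$ rather than $\lf\lbd\rf!\,e^{\lf\lbd\rf}/\lbd^{\lf\lbd\rf}$; the paper appears to have a typo here (the stated bound is off by a factor of at most $e$, which is immaterial downstream).

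For $\mu_\lbd$ your route is genuinely different. You compute the numerator directly by splitting over transition type and then invoking the Poisson consistency relations (adding a uniform to $\pi_\lbd(\cdot\mid S_{n-1})$ gives $\pi_\lbd(\cdot\mid S_n)$; deleting one of $n+1$ exchangeable points gives $\pi_\lbd(\cdot\mid S_n)$), which produces exactly $n\,\pi_\lbd(B)$ and $\lbd\,\pi_\lbd(B)$ respectively. The paper instead invokes the stationarity identity
\[
\int_{X\in S}\pi_\lbd(dX)\,q_\lbd(X,B\cap S_n)=\int_{X\in B\cap S_n}\pi_\lbd(dX)\,q_\lbd(X,S)=(\lbd+n)\,\pi_\lbd(B\cap S_n),
\]
which packages both transition types at once and avoids any explicit use of the Poisson structure beyond $q_\lbd(X,S)=\lbd+|X|$. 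The paper's argument is thus shorter and more robust (it would work verbatim for any stationary jump process with bounded total rate on each stratum), while yours is more transparent about where the two pieces $n$ and $\lbd$ come from. Your closing remark that the removal term actually vanishes on $A(w_\lbd)^c$ is a nice observation that the paper does not make.
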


\begin{proof}
A Poisson point process on $[0,1]$ conditioned to have cardinality $n$ is equal in distribution to $\{u_i\}_{i=1}^n$ where $(u_i)$ are i.i.d.~uniform on $[0,1]$. Since $\nu_\lbd$ has this distribution with $n=\lf\lbd\rf$, $\nu_\lbd$ has density with respect to $\pi_\lbd$ given by
\[\frac{d\nu_\lbd(X)}{d\pi_\lbd(X)} = \begin{cases} 
\lf\lbd\rf!e^{\lf\lbd\rf}/ \lbd^{\lf\lbd\rf} & \text{if} \ |X|=\lf\lbd\rf, \\
0 & \text{otherwise},
\end{cases}\]
and in particular $\nu_\lbd \ll \pi_\lbd$. For $\mu_\lbd$, since $\pi_\lbd$ is stationary, for $B\subset S$
\[\int_{X\in S}\pi(dX)q(x,B) = \int_{X\in B} \pi(dX)q(X,S).\]
Let $S_n=\{X\in S\colon |X|=n\}$. If $X\in S_n$ then $q(X,S)=\lbd+n$, so for $B\subset S$ and each $n$,
\begin{align*}
\mu_\lbd(B\cap S_n) = \frac{\int_{X \in A(w_\lbd)} \pi_\lbd(dX) q_\lbd(X,B \cap S_n)}{q_{\exit,\lbd}(A(w_\lbd))}&\le q_{\exit,\lbd}(A(w_\lbd))^{-1} \int_{X \in S}\pi_\lbd(dx)q_\lbd(X,B \cap S_n)\\
&= q_{\exit,\lbd}(A(w_\lbd))^{-1} \int_{X\in B\cap S_n} \pi_\lbd(dX)q(X,S) \\
&= \frac{(\lbd+n)\pi_\lbd(B \cap S_n)}{q_{\exit,\lbd}(A(w_\lbd))}.
\end{align*}
This implies the given estimate on the density function $d\mu_\lbd/d\pi_\lbd$ restricted to $S_n$. Since $B=\bigcup_{n\ge 0}B\cap S_n$, if $\pi_\lbd(B)=0$ then $\pi_\lbd(B\cap S_n)=0$ for all $n$, and the above implies $\mu_\lbd(B\cap S_n)=0$ for all $n$, thus $\mu_\lbd(B)=0$; in other words, $\mu_\lbd \ll \pi_\lbd$.
\end{proof}

Proposition \ref{prop:exp} is stated for a sequence $X_m$; we'll generally preserve the notation $X_\lbd$, with the understanding that we are taking an arbitrary sequence $(\lbd_m)$ with $\lbd_m\to\infty$ as $m\to\infty$.
 
\begin{lemma}\label{lem:prop1cond}
If $\liminf_\lbd w_\lbd/\log \lbd$ is large enough then for any sequence $(\lbd_m)$ with $\lbd_m\to\infty$ as $m\to\infty$, conditions (i)-(iii) of Proposition \ref{prop:exp} hold for the dynamic Poisson point process $(X_\lbd)$ with regeneration time $\eta_\lbd$ and distribution $\nu_\lbd$ given by Definition \ref{def:renew} and Lemma \ref{lem:renprop} (a) respectively, and with the gap event $A(w_\lbd)$ defined in \eqref{eq:Aw}.    
\end{lemma}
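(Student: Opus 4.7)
The plan is to verify, in turn, conditions (i), (ii), and (iii) of Proposition \ref{prop:exp} for the dynamic Ppp with the refresh time $\eta_\lbd$ and regeneration measure $\nu_\lbd$. The main ingredients will be: the density bound $d\nu_\lbd/d\pi_\lbd \le \lf\lbd\rf! e^{\lf\lbd\rf}/\lbd^{\lf\lbd\rf} = O(\sqrt{\lbd})$ (by Stirling in Lemma \ref{lem:numudens}) to transfer high-probability properties from $\pi_\lbd$ to $\nu_\lbd$; the density-at-scale estimate of Lemma \ref{lem:alphadense}; the lower bound on $\tau_{A(w_\lbd)}$ from dense initial conditions provided by Lemma \ref{lem:denstime}; the i.i.d.\ structure of the cycle lengths $w_{\lbd,n}^\ren := t_{\lbd,n}^\ren - t_{\lbd,n-1}^\ren$ (Lemma \ref{lem:renprop}(c)); and the mean and tail bound on $\eta_\lbd$ from Lemma \ref{lem:renest}(c)--(d).

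For (i), I would bound $p_\lbd = P_{\nu_\lbd}(\tau_{A(w_\lbd)} < \eta_\lbd)$ by the sum of (a) $P_{\nu_\lbd}(X_\lbd(0)$ is not $\alpha\lbd$-dense at scale $w_\lbd/2)$ and (b) the probability on the complementary event, for some fixed $\alpha \in (0, 1/2)$. Taking $C$ large enough in the hypothesis, the proof of Lemma \ref{lem:alphadense} makes (a) super-polynomially small under $\pi_\lbd$, and the density bound just mentioned transfers this to $o(1)$ under $\nu_\lbd$. For (b), Lemma \ref{lem:denstime} provides a lower bound on $\tau_{A(w_\lbd)}$ that exceeds, say, $C'\log\lbd$ with probability tending to $1$, while Lemma \ref{lem:renest}(d) applied to $N_\lbd(0) = \lf\lbd\rf$ gives $P_{\nu_\lbd}(\eta_\lbd > \log\lbd + t) \le Ce^{-ct}$, so $\eta_\lbd \le C'\log\lbd$ with high probability once $C'$ is large. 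Combining, $p_\lbd \to 0$.

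For (ii), by Lemma \ref{lem:renprop}(c) the $(w_{\lbd,n}^\ren)$ are i.i.d.; by Lemma \ref{lem:renest}(c) the common mean $\phi_\lbd := E_{\nu_\lbd}[w^\ren]$ satisfies $\phi_\lbd = \log\lbd + O(1)$; and by Lemma \ref{lem:renest}(d), $P(w^\ren > \log\lbd + t) \le Ce^{-ct}$, which gives $\var(w^\ren) = O((\log\lbd)^2) = O(\phi_\lbd^2)$. Then for every $n \ge 1$,
\[E_{\nu_\lbd}\left[\left(\frac{t_{\lbd,n}^\ren}{n\phi_\lbd}\right)^2\right] = 1 + \frac{\var(w^\ren)}{n\phi_\lbd^2} = O(1)\]
uniformly in $\lbd$ and $n$, so the sequence in (ii) is $L^2$-bounded, hence uniformly integrable, for any sequence $\delta_m \to 0$.

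The main obstacle will be (iii). Since the cycles are i.i.d., the hit indicators $\alpha_i := \1(X_\lbd(t) \in A(w_\lbd)$ for some $t \in [t_{\lbd,i-1}^\ren, t_{\lbd,i}^\ren))$ are i.i.d.\ Bernoulli$(p_\lbd)$. Conditioning on $\{N_\lbd = k\}$ makes cycles $1,\dots,k-1$ distributed as $w^\ren \mid \alpha = 0$, cycle $k$ as $w^\ren \mid \alpha = 1$, and leaves cycles $k+1,\dots,n$ unconditioned; summing over $k$ and dividing by $P(N_\lbd \le n)$ yields
\[E_{\nu_\lbd}[t_{\lbd,n}^\ren \mid N_\lbd \le n] \le n\,E[w^\ren \mid \alpha = 0] + E[w^\ren \mid \alpha = 1] + n\phi_\lbd.\]
The first term is bounded by $n\phi_\lbd/(1-p_\lbd) = O(n\phi_\lbd)$. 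The difficulty is the middle term: the naive bound $E[w^\ren \mid \alpha = 1] \le \phi_\lbd/p_\lbd$ is a factor $1/p_\lbd$ too large. Instead I will use
\[E[w^\ren \1(\alpha=1)] = \int_0^\infty P(w^\ren > t,\,\alpha=1)\,dt \le \int_0^\infty \min(P(w^\ren > t),\, p_\lbd)\,dt,\]
and split the integral at $T := \log\lbd + c^{-1}\log(C/p_\lbd)$, the value at which Lemma \ref{lem:renest}(d)'s bound equals $p_\lbd$; since $p_\lbd \log(1/p_\lbd) \to 0$, this gives $p_\lbd T + O(p_\lbd) = O(p_\lbd \log\lbd)$, hence $E[w^\ren \mid \alpha=1] = O(\log\lbd) = O(\phi_\lbd)$. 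Plugging back in, $E_{\nu_\lbd}[t_{\lbd,n}^\ren \mid N_\lbd \le n] = O(n\phi_\lbd) = O(E_{\nu_\lbd}[t_{\lbd,n}^\ren])$, establishing (iii).
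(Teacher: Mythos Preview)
Your approaches to all three conditions differ from the paper's, and for (i) and (ii) your arguments are correct. For (i), the paper uses the excursion principle (Lemma \ref{lem:excprinc}) to obtain directly $p_\lbd \le (2+2\lbd)\,\pi(A(w_\lbd))\,E_{\nu_\lbd}[\eta_\lbd]^{-1}$ from a lower bound on the expected time spent in $A(w_\lbd)$ per cycle, whereas you transfer $\alpha$-density from $\pi_\lbd$ to $\nu_\lbd$ via the $O(\sqrt{\lbd})$ Radon--Nikodym bound and then invoke Lemma \ref{lem:denstime}; both work, though the paper's route is shorter and gives an explicit rate. For (ii), the paper proves a uniform exponential tail for $t_{\lbd,n}^\ren/E[t_{\lbd,n}^\ren]$ via exponential moments of the excess $(w^\ren-\log\lf\lbd\rf)_+$; your $L^2$ bound is simpler and indeed suffices for uniform integrability.

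There is, however, a genuine gap in your argument for (iii). Your splitting of $\int_0^\infty \min(P(w^\ren>t),p_\lbd)\,dt$ at $T=\log\lbd+c^{-1}\log(C/p_\lbd)$ correctly yields $E[w^\ren\mid\alpha=1]\le T+O(1)=O(\log\lbd+\log(1/p_\lbd))$, but the inference ``since $p_\lbd\log(1/p_\lbd)\to 0$, this gives $O(p_\lbd\log\lbd)$'' is invalid: that would require $\log(1/p_\lbd)=O(\log\lbd)$, which fails whenever $\lbd w_\lbd/\log\lbd\to\infty$ (for $w_\lbd$ bounded away from $0$ one has $\log(1/p_\lbd)\asymp\lbd$). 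The patch is easy: you do not need $E[w^\ren\mid\alpha=1]=O(\phi_\lbd)$ but only $O(n_\lbd(\dlt_\lbd)\phi_\lbd)$, and since $n_\lbd(\dlt_\lbd)\ge\dlt_\lbd/p_\lbd$ by a union bound while $\log(1/p_\lbd)=o(1/p_\lbd)$, for any $\dlt_\lbd\to 0$ slowly enough the term $\log(1/p_\lbd)$ is $o(n_\lbd(\dlt_\lbd))$. The paper avoids this issue entirely: having established in (ii) the uniform exponential tail $P(t_{\lbd,n}^\ren/E[t_{\lbd,n}^\ren]>t)\le\exp(-c'(t-1)n\log\lbd)$, it simply divides by $P(N_\lbd\le n_\lbd(\dlt))\ge\dlt$ and absorbs the resulting $\log(1/\dlt)$ into the exponent using $n_\lbd(\dlt_\lbd)\ge 1/\dlt_\lbd$ for $\dlt_\lbd=p_\lbd^{1/2}$, then integrates the conditional tail.
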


\begin{proof}
\enumrom
\item letting $p_\lbd=P_{\nu_\lbd}(\tau_{A(w_\lbd)} < \eta_\lbd)$ we will show that $p_\lbd\to 0$ as $\lbd\to\infty$. From any distribution, $\eta_\lbd>0$ almost surely, since if $\tau_{\flsh,\lbd}=0$ then $\tau_{\pin,\lbd}$ is at least the time for $\lf\lbd\rf$ new particles to appear. By Lemma \ref{lem:excprinc},
\[\pi(A(w_\lbd)) = \frac{E_{\nu_\lbd}\left [T_{A(w_\lbd)}(\eta_\lbd)\right]}{E_{\nu_\lbd}[\eta_\lbd]}.\]
Conditioned on $\tau(A(w_\lbd))<\eta_\lbd$, the expected time spent in $A(w_\lbd)$ before $\eta_\lbd$ is at least $1/(2+2\lbd)$, since the number of particles needs to go from $\lf \lbd\rf \pm 1$ to $\lf\lbd\rf$ to refresh and the rate of either transition is at most $2(1+\lf \lbd\rf)$. In other words, $E_{\nu_\lbd}\left [T_{A(w_\lbd)}(\eta_\lbd)\right] \ge  p_\lbd /(2+2\lbd)$. It follows that
\[p_\lbd \le (2+2\lbd) \pi(A(w_\lbd))/E_{\nu_\lbd}[\eta_\lbd].\]
By Lemma \ref{lem:renest} (c), $E_{\nu_\lbd}[\eta_\lbd]=\log \lbd + O(1)>1$ for large $\lbd$ and by Lemma \ref{lem:piAw}, $\pi(A(w_\lbd)) = O( \lbd e^{-\lbd w_\lbd})$ so $p_\lbd = O(\lbd^2 e^{-\lbd w_\lbd}$). If $\liminf_\lbd \lbd w_\lbd / \log \lbd > 2$ then $p_\lbd \to 0$ as $\lbd\to\infty$.\\
\item With stopping sequence $(t_{\lbd,n}^\ren)$ and waiting times $w_{\lbd,n}^\ren = t_{\lbd,n}^\ren - t_{\lbd,n-1}^\ren$ as in Lemma \ref{lem:renprop} and excess time $w_{\lbd,n}^*:=(w_{\lbd,n}^\ren-\log\lf\lbd\rf)_+$, by Lemma \ref{lem:renest} (d) with $N_\lbd(0)=\lf\lbd\rf$, for any $\theta<c$, $E_{\nu_\lbd}[e^{\theta w_{\lbd,1}^*}]=:e^{C_1} < \infty$. Since, by Lemma \ref{lem:renprop} (c), $(w_{\lbd,n}^\ren)$ are i.i.d., so are $(w_{\lbd,n}^*)$, so 
\[E_{\nu_\lbd}\left [\exp\left(\theta \sum_{i=1}^n w_{\lbd,i}^*\right)\right] = e^{nC_1},\]
and by Markov's inequality applied to $e^{\theta \cdot}$,
\[P_{\nu_\lbd}\left(\sum_{i=1}^n w_{\lbd,i}^* > t\right) \le e^{-\theta t + nC}.\]
Since $E_{\nu_\lbd}[ t_{\lbd,n}^\ren] = n E_{\nu_\lbd}[\eta_\lbd] = n(\log \lf\lbd\rf + O(1))$ by Lemma \ref{lem:renest} (c), which $=n(\log\lf\lbd\rf+O(1))$, for some $C_2>0,$
\begin{align}\label{eq:trentail}
P_{\nu_\lbd}(t_{\lbd,n}^\ren / E_{\nu_\lbd}[t_{\lbd,n}^\ren] > t) &\le P_{\nu_\lbd}(t_{\lbd,n}^\ren > t (\log\lf\lbd\rf-C_2)n)  \nonumber \\
&\le P_{\nu_\lbd}\left(\sum_{i=1}^{n} w_{\lbd,i}^*  > ((t-1)\log\lf\lbd\rf - C_2)n  \right) \nonumber \\
&\le \exp((-\theta (t-1)\log \lf\lbd\rf + C_1+\theta C_2)n) \nonumber \\
&\le \exp(-(\theta (t-1)\log \lf\lbd\rf )n/2)
\end{align}
for any $n\ge 1$ if $t\ge 2$ and $\theta \log \lf\lbd\rf/2 \ge C_1+\theta C_2$. This holds for large $\lbd$ and implies uniform integrability of $\{t_{\lbd,n}^\ren/E_{\nu_\lbd}[t_{\lbd,n}^\ren]\colon n\ge 1, \ \lbd\ge\lbd_0\}$ for some $\lbd_0$, which implies condition (ii) for any sequence $(\lbd_m)$ with $\lbd_m\to\infty$ as $m\to\infty$.\\

\item With $N_\lbd=\inf\{n\colon \tau_{A(w_\lbd)} < t_{\lbd,n}^\ren\}$, $n_\lbd(\dlt)=\min\{n\colon P(N_\lbd< n)\ge \dlt\}$, using the definition of conditional probability and \eqref{eq:trentail}, for some $\theta,\lbd_0$ and all $t\ge 2$ and $\lbd\ge\lbd_0$,
\begin{align}\label{eq:trenctail}
&P_{\nu_\lbd}(t_{\lbd,n_\lbd(\dlt)}^\ren / E_{\nu_\lbd}[t_{\lbd,n_\lbd(\dlt)}^\ren] > t \mid N_\lbd < n_\lbd(\dlt)) ) \\
&\le \exp(-(\theta (t-1)\log \lf\lbd\rf )n_\lbd(\dlt)/2 + \log (1/\dlt)).
\end{align}
By a union bound over regeneration cycles, $p_\lbd (n_\lbd(\dlt)-1)\ge \dlt$ so $n_\lbd(\dlt)\ge \dlt/p_\lbd = (\dlt^2/p_\lbd) (1/\dlt)$ and since $p_\lbd\to 0$, taking $\dlt_\lbd=p_\lbd^{1/2}$, $n_\lbd\ge 1/\dlt$ and $n_\lbd\to\infty$ as $\lbd\to\infty$. With $c_\lbd=\theta \log\lf\lbd\rf/2$, for large $\lbd$, $c_\lbd\ge 1$. Since $\log(x)\le x$ for all $x$, $\log(1/\dlt_\lbd)\le n_\lbd(\dlt_\lbd)$ so for large $\lbd$ the probability in \eqref{eq:trenctail} is at most $\exp(-c_\lbd(t-2) n_\lbd(\dlt_\lbd))$.  Using the trivial bound of $1$ for the above probability for $t<3$ and the above bound for $t\ge 3$ and integrating over $t$ we find that
\[E_{\nu_\lbd}\left[t_{\lbd,n_\lbd(\dlt_\lbd)}^\ren / E_{\nu_\lbd}[t_{\lbd,n_\lbd(\dlt_\lbd)}^\ren] \mid N_\lbd < n_\lbd(\dlt_\lbd)) \right]= O(1),\]
which gives condition (iii).
\enumend
\end{proof}

\begin{lemma}\label{lem:exitest}
Let $A(w_\lbd)$ be as in \eqref{eq:Aw} and $\pi_\lbd$ the measure of a Ppp on $[0,1]$. In the notation of Proposition \ref{prop:mustat}, $(\tau_n(A(w_\lbd),A(w_\lbd)^c)/n)_n$ is uniformly integrable for any distribution of $X_\lbd(0)$ and if $\lbd w_\lbd\to\infty$ then the exit rate $q_\exit(A(w_\lbd))$ satisfies
\[q_\exit(A(w_\lbd)) \sim \lbd w_\lbd \pi_\lbd (A(w_\lbd)).\]
In particular, $q_\exit(A(w_\lbd))>0$ for large $\lbd$ and with $\mu_\lbd$ as in Lemma \ref{lem:numudens}, \[E_{\mu_\lbd}[\tau_\exit(A(w_\lbd))] = 1/q_\exit(A(w_\lbd)).\]
\end{lemma}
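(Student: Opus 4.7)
The plan is to prove the three assertions in order, with the main work being the asymptotic identity $q_\exit(A(w_\lbd)) \sim \lbd w_\lbd \pi_\lbd(A(w_\lbd))$. First I would derive an explicit pathwise formula for the exit rate. In the dynamic Ppp, death events can only enlarge gaps and so cannot take a configuration out of $A(w_\lbd)$, while a single birth event can destroy at most one gap. The only way for $X \in A(w_\lbd)$ to exit in one step is therefore that $X$ has exactly one gap of length $\ge w_\lbd$, of some length $\ell(X)$, and that the birth lands inside this gap at distance $r$ from its left endpoint with $r \in (\ell(X) - w_\lbd, w_\lbd)$, an interval of length $(2w_\lbd - \ell(X))_+$. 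Since births occur uniformly at total rate $\lbd$, this yields
\[q_\lbd(X, A(w_\lbd)^c) = \lbd (2w_\lbd - \ell(X))_+ \1\{X \text{ has a unique gap of length} \ge w_\lbd\},\]
so that $q_\exit(A(w_\lbd))$ equals the $\pi_\lbd$-integral of this quantity.

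To extract the asymptotic I would invoke Lemma \ref{lem:piAw}. Its statement (1) implies that, conditionally on $A(w_\lbd)$, the excess $\ell - w_\lbd$ of the leftmost large gap is exponential with rate $\lbd$, so
\[E_{\pi_\lbd}[(2w_\lbd - \ell)_+ \mid A(w_\lbd)] = \int_0^{w_\lbd}(w_\lbd - y) \lbd e^{-\lbd y}\,dy = w_\lbd - \lbd^{-1}(1 - e^{-\lbd w_\lbd}) \sim w_\lbd\]
as $\lbd w_\lbd \to \infty$. The restriction to a unique large gap is negligible: statement (2) of Lemma \ref{lem:piAw} says that, conditionally on $A(w_\lbd)$, the left-$I$-contraction of $X$ stochastically dominates a Ppp$(\lbd)$ on $\R_+$, so the conditional probability of a second gap of length $\ge w_\lbd$ is $O(\pi_\lbd(A(w_\lbd))) = o(1)$. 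Combining these estimates gives $q_\exit(A(w_\lbd)) \sim \lbd w_\lbd \pi_\lbd(A(w_\lbd))$, and in particular $q_\exit(A(w_\lbd)) > 0$ for all sufficiently large $\lbd$.

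For the final identity I would apply Proposition \ref{prop:mustat}: ergodicity of $P_{\pi_\lbd}$ with respect to the $t_0$ time shift is provided by Lemma \ref{lem:dynerg}, positivity of $q_\exit$ is the asymptotic just established, and $\mu_\lbd$ of Lemma \ref{lem:numudens} is the stationary exit distribution. The remaining input is uniform integrability of $(\tau_n(A(w_\lbd),A(w_\lbd)^c)/n)_n$, which I expect to be the main obstacle. My plan is to bound a fractional moment of a single stationary exit cycle duration under $\mu_\lbd$, using the density bound $d\mu_\lbd/d\pi_\lbd|_{S_n} \le (\lbd+n)/q_\exit(A(w_\lbd))$ from Lemma \ref{lem:numudens} together with the refresh and counting estimates of Lemmas \ref{lem:renest} and \ref{lem:countest} to dominate the cycle-length tail. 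Under $\mu_\lbd$ the cycle durations are i.i.d.\ by stationarity, so such a moment bound gives UI directly; an arbitrary initial distribution is then handled by strong Markov at the first exit time $\tau_1$, reducing to the stationary case after one cycle, with $\tau_1$ itself controlled by a refresh-time argument. With UI in hand, Proposition \ref{prop:mustat} delivers $E_{\mu_\lbd}[\tau_\exit(A(w_\lbd))] = 1/q_\exit(A(w_\lbd))$.
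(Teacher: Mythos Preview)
Your treatment of the asymptotic $q_\exit(A(w_\lbd)) \sim \lbd w_\lbd \pi_\lbd(A(w_\lbd))$ is essentially the paper's argument, and your explicit handling of the event ``two or more large gaps'' via part~2 of Lemma~\ref{lem:piAw} is in fact cleaner than the paper, which writes $q_\lbd(X,A(w_\lbd)^c)=\lbd(2w_\lbd-\ell)_+$ as an equality without the uniqueness indicator and then sandwiches it between $\lbd w_\lbd$ and $\lbd(w_\lbd-(v_r-v_\ell-w_\lbd))_+$.

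The uniform integrability plan, however, has real gaps. First, under $\mu_\lbd$ the exit cycles are \emph{stationary}, not i.i.d.; stationarity of $(Y_n)$ is what Proposition~\ref{prop:mustat} gives, and the cycle length depends on the starting state $Y_{n-1}$. This is salvageable if by ``fractional moment'' you mean a $(1+\varepsilon)$-moment, since Jensen gives $E[(\tau_n/n)^{1+\varepsilon}]\le E[W_1^{1+\varepsilon}]$ for stationary nonnegative $W_i$, but a moment of order $<1$ would not suffice. Second, and more seriously, ``reducing to the stationary case after one cycle'' does not work: after $\tau_1$ the state is $X(\tau_1)$, whose law depends on the initial distribution and is generally not $\mu_\lbd$, so you cannot simply import the stationary moment bound. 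The lemma asserts UI for \emph{every} initial distribution, and your scheme does not deliver this.

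The paper takes a completely different route that avoids $\mu_\lbd$ altogether: it builds an explicit dominating sequence $(\sigma_n)$ with $\tau_n\le\sigma_n$ by repeatedly waiting one time unit, then waiting for $N_\lbd$ to return to $\lfloor\lbd\rfloor$, and declaring $\sigma_{n+1}$ the first such return at which both $N_\lbd=0$ occurred at the preceding ``time $+1$'' mark and the resulting $\lfloor\lbd\rfloor$-point configuration lies in $A(w_\lbd)^c$. Since $\emptyset\in A(w_\lbd)$ and the return state is in $A(w_\lbd)^c$, an exit transition is forced in each interval $(\sigma_n,\sigma_{n+1}]$. The increments $\sigma_{n+1}-\sigma_n$ then have a uniform exponential moment (geometric number of sub-cycles, each with an exponential tail via Lemma~\ref{lem:countest}(a)), and this holds for any initial law after the finite time $\sigma_0=\eta_\lbd$. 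If you want to keep your moment-based approach, you would need an argument that controls the cycle length uniformly over starting states in $A(w_\lbd)^c$, not just on $\mu_\lbd$-average.
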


\begin{proof}
The last implication follows from Proposition \ref{prop:mustat} (note that the required ergodicity is proved in Lemma \ref{lem:dynerg}). For tidiness denote $\tau_n(A(w_\lbd),A(w_\lbd)^c)$ by $\tau_n$. To show that $(\tau_n/n)$ is UI it suffices that for some sequence $(\sigma_n)_{n\ge 0}$ for which $\tau_n\le\sigma_n$ a.s.~for every $n\ge 1$, $\sigma_0<\infty$ a.s.~and the waiting times $w_n=\sigma_n-\sigma_{n-1}$, have, for some $\theta,C>0$ and all $n\ge 1$, $E[e^{\theta w_n} \mid w_i \ \forall i<n]\le C<\infty$, since then $E[e^{\theta (\sigma_n-\sigma_0)}] \le C^n$ and
\[P(\sigma_n/n > a) \le P(\sigma_0>na/2)+ P(e^{\theta \sigma_n} > e^{na/2}) \le P(\sigma_0>na/2) + e^{-na/2 + n\log C},\]
which $\to 0$ uniformly in $n$ as $a\to\infty$. Such a sequence is given as follows. Let $\sigma_0=\eta_\lbd$, which by Lemma \ref{lem:renest} (d) is a.s.~finite, for any initial distribution. 
Given $\sigma_n$, define the sequence $(t_{n,k})_{k\ge 0}$ by $t_{n,0}=\sigma_n$ and $t_{n,k+1} = \inf\{t>t_{n,k}+1\colon N_\lbd(t)=\lf\lbd\rf\}$. 
Let
\[K_n=\inf\{k\colon N_\lbd(t_{n,k-1}+1)=0 \ \text{and} \ X_\lbd(t_{n,k}) \notin A(w_\lbd)\}\]
and let $\sigma_{n+1}=t_{n,K}$. Then $X_\lbd(t_{n,K-1}+1) \in A(w_\lbd)$ and $X_\lbd(t_{n,K}) \notin A(w_\lbd)$ so for each $n\ge 0$ there is a transition from $A(w_\lbd)$ to $A(w_\lbd)^c$ in the time interval $(\sigma_n,\sigma_{n+1}]$, which implies $\tau_n\le \sigma_n$ for all $n\ge 1$. It remains to show that $E[e^{\theta w_n} \mid w_i \ \forall i<n] \le C$ for some $\theta,C>0$ and all $n$. Since $N_\lbd(\sigma_n)=\lf\lbd\rf$ for each $n$, by the strong Markov property it suffices to show that if $N_\lbd(0)$ a.s.~then with $s_0=0$, $s_{k+1}=\inf\{t>s_k+1\colon N_\lbd(t)=\lf\lbd\rf\}$ and $\kappa$ defined in the same way as $K_n$ except with $(s_k)_k$ in place of $(t_{n,k})_k$, for some $\theta>0$, $E[\exp(\theta s_\kappa)]$ is deterministic and finite. For this to hold it suffices that
\enumrom
\item $\kappa$ is geometrically distributed with some parameter $p>0$ and that
\item with $B=\{\kappa=1\}$, if $N_\lbd(0)=\lf\lbd\rf$ a.s.~then for some $\theta_0,C$, \\ $E[e^{\theta s_1} \mid B] \ \vee \ E[e^{\theta s_1} \mid B^c]\le C$ a.s.
\enumend
To see that this suffices, first note that by dominated convergence there is $0<\theta\le \theta_0$ such that $C_\theta:=E[e^{\theta s_1} \mid B] \ \vee \ E[e^{\theta s_1} \mid B^c]<1/(1-p)$. Then observe that since $N_\lbd(s_k)=\lf\lbd\rf$ a.s.~for every $k$, with $\F$ the natural filtration of $X_\lbd$, (ii) implies that almost surely for each $k$
\[E[\exp(\theta (s_k-s_{k-1}) \mid \F(s_{k-1}), \kappa>k] \ \vee \  E[\exp(\theta (s_k-s_{k-1}) \mid \F(s_{k-1}), \kappa=k]\le C_\theta,\]
so with
\[M_j = E[e^{\theta (s_j-s_{j-1})} \mid \kappa>j, s_1,\dots,s_{j-1}] \ \ \text{for} \ j<k \ \ \text{and} \ \ M_kE[e^{\theta (s_j-s_{j-1})} \mid \kappa=k, s_1,\dots,s_{k-1}],\]
it follows that $M_j\le C_\theta$ for all $j\le k$ and
\[E[e^{\theta s_k}\mid \kappa=k] = E\left[\prod_{j=1}^k M_j\right] \le E[C_\theta^k] = C_\theta^k.\]
Then combining with (i) and using $C_\theta<1/(1-p)$,
\[E[e^{\theta s_\kappa}] = \sum_k E[e^{\theta s_k}\mid \kappa=k] P(\kappa=k) \le \sum_k C_\theta^k(1-p)^{k-1}p < \infty.\] 
It remains to show (i) and (ii). Recalling that $B=\{\kappa=1\}$, (i) holds with $p=P(B\mid N_\lbd(0)=\lf\lbd\rf)$ provided the latter is positive and deterministic, since with $\F$ the natural filtration of $X_\lbd$, by the Markov property and the fact that $N_\lbd(s_{k-1})=\lf\lbd\rf$ a.s.,
\[P(\kappa=k \mid \kappa>k-1, \ \F(s_{k-1}))=P(\kappa=k \mid X_\lbd(s_{k-1}), \kappa>k-1)=p,\]
and so
\[P(\kappa=k \mid \kappa>k-1) = E[P(\kappa=k \mid \kappa>k-1, \ \F(s_{k-1})] = E[p]=p.\]
To see that $p$ is positive and deterministic note that, since $N_\lbd$ is an irreducible Markov chain, $P(N_\lbd(1)=0\mid N_\lbd(0)=\lf\lbd\rf)$ is positive and deterministic, and that $P(X_\lbd(s_1) \notin A(w_\lbd) \mid X_\lbd(1) =\emptyset)$ is deterministic, and is positive so long as $\lf\lbd\rf > \lceil 2/w_\lbd\rceil$ (which holds for large $\lbd$ by the assumption $\lbd w_\lbd\to\infty$), since then $X_\lbd(s_1) \notin A(w_\lbd)$ if there is one point in each interval $[(i-1)/\lceil 2/w_\lbd, i/\lceil 2/w_\lbd]$, $i=1,\dots,\lceil 2/w_\lbd\rceil$. Using the Markov property at time $1$ and combining the two parts then implies the claim.\\

For (ii), let $B_1=\{N_\lbd(1)=0\}$ and $B_2=\{X_\lbd(s_1) \notin A(w_\lbd)\}$ so that $B=B_1\cap B_2$, and note that by Lemma \ref{lem:countest} (a),
\[P(s_1-1 > t + \log(\lbd)/2 \mid B_1) \le Ce^{-ct},\]
and in particular $E[e^{\theta s_1} \mid B_1]<\infty$ for $\theta<1$. Since $y(t):=E[N_\lbd(t)]$ satisfies the equation $dy/dt = \lbd - y$, which has solution $y(t)=\lbd + e^{-t}(y(0)-\lbd)$, if $N_\lbd(0)=\lf\lbd\rf$ then $E[N_\lbd(t)]\le\lbd$ for $t>0$ and with the trivial bound $|a-b|\le |a|+|b|$, $E[\lbd^{-1/2}|N_\lbd(t)-\lbd|] \le 2\sqrt{\lbd}$ and with $q=P(B_1)>0$,
\[E[\lbd^{-1/2}|N_\lbd(1)-\lbd| \mid N_\lbd(0)=\lf\lbd\rf, \ N_\lbd(1)\ne 0] \le \frac{2\sqrt{\lbd}}{1-q},\]
and using again Lemma \ref{lem:countest} (a)
\[P(s_1-1 > t + \log(2\sqrt{\lbd}/(1-q)) \mid B_1^c) \le Ce^{-ct},\]
and in particular $E[e^{\theta s_1} \mid B_1^c]<\infty$ for $\theta<1$. Since $B_1^c \subset B^c$ it remains to show that $E[e^{\theta s_1} \mid B_1 \cap B_2]<\infty$ and $E[e^{\theta s_1} \mid B_1 \cap B_2^c]<\infty$ for some $\theta>0$. Since we have shown that $E[e^{\theta s_1} \mid B_1]<\infty$ for $\theta<1$ it suffices to show that, conditioned on $B_1$, $B_2$ is independent of $s_1$, for which it is enough to show that if $X_\lbd(0)=\emptyset$ then $X_\lbd(\eta_\lbd)$ is independent of $\eta_\lbd$. In the notation of the proof of Lemma \ref{lem:renprop}, if $X_\lbd(0)=\emptyset$ then $\eta_\lbd=\inf\{t\colon |K_\lbd(t)|=\lf\lbd\rf\}$ which is determined by $(w_i),(s_i)$, and so $K_\lbd(\eta_\lbd)$ is also determined by $(w_i),(s_i)$. Conditioned on $\eta_\lbd$ and $K_\lbd(\eta_\lbd)$, $X_\lbd(\eta_\lbd)=\{u_i\}_{i \in K_\lbd(\eta_\lbd)}$ has the distribution of $|K_\lbd(\eta_\lbd)|=\lf\lbd\rf$ i.i.d.~uniform points on $[0,1]$, so the claim holds.\\

We now compute the estimate for the exit rate. By definition
\begin{align*}
q_\exit(A(w_\lbd)) &= \int_{X \in A(w_\lbd)}\pi_\lbd(dx)q_\lbd(X,A(w_\lbd)^c) \\
&= \pi(A(w_\lbd)) E_{\pi_\lbd}[ q_\lbd(X,A(w_\lbd)^c) \mid A(w_\lbd)].
\end{align*}
Let $\tilde\pi_\lbd$ be a Ppp$(\lbd)$ on $\R_+$, recalling that, in the notation of Lemma \ref{lem:piAw}, $A(w_\lbd)=\{v_\ell \le 1-w_\lbd\}$ and that $\tilde \pi_\lbd(A(w_\lbd))=\pi(A(w_\lbd))$. For $X\in A(w_\lbd)$,
\begin{align*}
\lbd w_\lbd \ge q_\lbd(X,A(w_\lbd)^c) &= \lbd (w_\lbd - (v_r\wedge 1 -v_\ell-w_\lbd))_+ \\
&\ge  \lbd (w_\lbd - (v_r-v_\ell-w_\lbd))_+.
\end{align*}
Let $y_\lbd=v_r-v_\ell-w_\lbd$. Conditioned on $v_\ell \le 1-w_\lbd$, by Lemma \ref{lem:piAw} $y_\lbd$ is exponentially distributed with rate $\lbd$. This implies that
\[\pi_\lbd(|q_\lbd(X,A^c)- \lbd w_\lbd|\ge \lbd s \mid A(w_\lbd))\le e^{-\lbd s},\]
and thus that
\[ \left| E_{\pi_\lbd}[q_\lbd(X,A(w_\lbd)^c) \mid A(w_\lbd)] - \lbd w_\lbd \right| \le \int_0^\infty e^{-s}ds=1 = o(\lbd w_\lbd),\]
completing the proof.
\end{proof}

\begin{lemma}\label{lem:denstime}
Suppose for some $\alpha>0$ and every $\lbd$ that $X_\lbd(0)$ is $\lbd\alpha$-dense at scale $w_\lbd/2$. If $\liminf_\lbd \lbd w_\lbd/\log\lbd $ is large enough then for some $c>0$,
\[P(\tau_{A(w_\lambda)} \le e^{c\alpha\lambda w_\lambda/2}) \to 0 \ \ \text{as} \ \lbd\to\infty.\]
\end{lemma}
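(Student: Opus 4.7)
The plan is to cover $[0,1]$ by a finite collection $I_1,\ldots,I_M$ of subintervals of length $w_\lbd/2$, exploit the fact that any gap of size $w_\lbd$ must fully contain at least one $I_k$, and then reduce the problem to showing that none of the marginal particle-count processes $N_k(t) := |X_\lbd(t) \cap I_k|$ hits $0$ within time $e^{c\alpha\lbd w_\lbd/2}$. The latter is exactly the kind of statement already provided by Lemma \ref{lem:countest}(d), so once the reduction and the appropriate union bound are set up, the proof is essentially a matter of counting intervals and checking that the exponential decay dominates the polynomial factor $1/w_\lbd$.

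Concretely, I would first pick $I_k$ of common length $w_\lbd/2$ with centers spaced $w_\lbd/4$ apart (truncated at the boundary), so that $M = O(1/w_\lbd)$ and every open interval of length $w_\lbd$ inside $[0,1]$ contains some $I_k$; in particular, $\tau_{A(w_\lbd)} \ge \min_k \tau_{N_k}(0)$ where $\tau_{N_k}(0) := \inf\{t\colon N_k(t) = 0\}$. Next, I would observe that, marginally, each $(N_k(t))$ is itself a dynamic Poisson process of the form \eqref{eq:dynPois} with intensity $\rho_\lbd := \lbd w_\lbd/2$: births in $I_k$ arrive at rate $\lbd |I_k| = \rho_\lbd$ from the total birth process restricted by Poisson thinning, and each particle currently in $I_k$ is independently removed at rate $1$, with both rates depending on the state only through $N_k$ itself. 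The $\alpha$-density hypothesis gives $N_k(0) \ge \alpha \lbd w_\lbd/2 = \alpha \rho_\lbd$ deterministically for each $k$ (assume $\alpha \in (0,1)$, else decrease it), so with $c_0,C_0$ the constants supplied by Lemma \ref{lem:countest}(d) for this $\alpha$, we obtain
\[
P(\tau_{N_k}(0) \le e^{c_0 \rho_\lbd}) \le C_0 e^{-c_0 \rho_\lbd}
\]
for every $k$ and every $\lbd$.

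A union bound then yields
\[
P(\tau_{A(w_\lbd)} \le e^{c_0 \lbd w_\lbd/2}) \le M C_0 e^{-c_0 \lbd w_\lbd/2} = O\!\left( w_\lbd^{-1} e^{-c_0 \lbd w_\lbd/2}\right).
\]
Under the hypothesis $\liminf_\lbd \lbd w_\lbd/\log\lbd \ge C$, we have $w_\lbd^{-1} = O(\lbd/\log\lbd)$ and $e^{-c_0 \lbd w_\lbd/2} \le \lbd^{-c_0 C/2}$ for large $\lbd$, so the bound is $O(\lbd^{1-c_0 C/2}/\log\lbd) \to 0$ provided $C > 2/c_0$, which is the quantitative form of the hypothesis ``$\liminf_\lbd \lbd w_\lbd/\log\lbd$ large enough''. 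Writing $c := c_0/\alpha$ we have $c_0 \lbd w_\lbd/2 = c\alpha \lbd w_\lbd/2$, matching the exponent in the statement. The main (and only non-routine) step is identifying that the marginal process $N_k$ is a dynamic Poisson process and hence amenable to Lemma \ref{lem:countest}(d); the rest is a combinatorial covering argument combined with a union bound.
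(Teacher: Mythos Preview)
Your proposal is correct and follows essentially the same approach as the paper: cover $[0,1]$ by $O(1/w_\lbd)$ intervals of length $w_\lbd/2$, note that a gap of size $w_\lbd$ forces one of the marginal counts $N_k$ to hit $0$, apply Lemma~\ref{lem:countest}(d) to each $N_k$ with $\rho=\lbd w_\lbd/2$, and take a union bound. The only cosmetic differences are that the paper uses non-overlapping half-intervals (plus one extra at the right endpoint) rather than your $w_\lbd/4$-spaced overlapping cover, and it writes the exponent directly as $c\alpha\lbd w_\lbd/2$ rather than introducing $c_0$ and relabeling.
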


\begin{proof}
For $i=1,\dots, n_\lambda:=\lf 2/w_\lambda \rf$ let $N_i(t)=|X(t)\cap [(i-1)w_\lambda/2,\, i w_\lambda/2]|$ and let $N_{n_\lambda+1}(t)=|X(t)\cap [1-w_\lambda/2,1]$. If $X(0)$ is $\alpha$-dense at scale $w_\lambda/2$ then $N_i(0) \ge \alpha \lambda w_\lambda/2$ for all $i \le n_\lambda+1$, and if $N_i(t)>0$ for all $i\le n_\lambda+1$ then $X(t)$ has no gap of size $w_\lambda$. Using Lemma \ref{lem:countest} (d) and taking a union bound over $i$, for some $c,C>0$,
\begin{align*}
P(\tau_{A(w_\lambda)} \le e^{c\alpha\lambda w_\lambda/2}) &\le P(N_i(t)=0 \ \text{for some} \ i\le n_\lambda+1, \ t \le e^{c \alpha\lambda w_\lambda/2}) \\
&\le (n_\lambda+1)Ce^{-c \alpha \lambda w_\lambda/2}.
\end{align*}
Since $n_\lbd = O(1/w_\lbd)=O(\lbd)$, if $\lambda w_\lambda / \log(\lambda) > 2/(c\alpha)$ the above bound is $O(\lbd^{-\dlt})$ for some $\delta>0$ and thus $\to 0$ as $\lambda\to\infty$.
\end{proof}

\begin{lemma}\label{lem:prop3cond}
If $\liminf_\lbd \lbd w_\lbd/\log \lbd$ is large enough and $\limsup_\lbd w_\lbd<1$ then conditions (i)-(iii) of Proposition \ref{prop:tauex} hold for the dynamic Poisson point process.
\end{lemma}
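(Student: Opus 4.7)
The plan is to verify conditions (i)--(iii) of Proposition \ref{prop:tauex} in turn, combining the estimates from Lemmas \ref{lem:piAw}--\ref{lem:prop1cond}. Condition (iii) is immediate from Lemma \ref{lem:piAw}, since $\pi_\lbd(A(w_\lbd))\sim \lbd(1-w_\lbd)e^{-\lbd w_\lbd}\to 0$ under $\liminf_\lbd \lbd w_\lbd/\log\lbd>1$ and $\limsup_\lbd w_\lbd<1$.

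For condition (ii), I first use Lemma \ref{lem:renest}(d) to pass from $E_{\mu_\lbd}[\eta_\lbd]$ to $E_{\mu_\lbd}[\log(|X|\vee\sqrt\lbd)]+O(1)$. Exploiting the Palm-type identity
\[\mu_\lbd(B) = q_{\exit}(A(w_\lbd))^{-1}\,\lbd\int_{X\in A(w_\lbd)}\pi_\lbd(dX)\int_0^1\1(X\cup\{u\}\in B\cap A(w_\lbd)^c)\,du,\]
together with the observation that the "good region" $R(X):=\int_0^1\1(X\cup\{u\}\notin A(w_\lbd))\,du$ satisfies $R(X)\le w_\lbd$ (an exit requires splitting the unique gap of size $s\in [w_\lbd,2w_\lbd)$ into two sub-gaps each shorter than $w_\lbd$, restricting $u$ to a region of width $2w_\lbd-s\le w_\lbd$) and $q_{\exit}(A(w_\lbd))\sim \lbd w_\lbd\pi_\lbd(A(w_\lbd))$ from Lemma \ref{lem:exitest}, I obtain $E_{\mu_\lbd}[g(|X|)]\le (1+o(1))E_{\pi_\lbd}[g(|X|+1)\mid A(w_\lbd)]$ for any nonnegative $g$. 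Taking $g(n)=\log(n\vee\sqrt\lbd)$ and applying Cauchy--Schwarz with the Poisson moment bound $E_{\pi_\lbd}[\log^2(|X|+1)]=O(\log^2\lbd)$ yields $E_{\mu_\lbd}[\eta_\lbd]=O(\log\lbd\cdot e^{\lbd w_\lbd/2}/\sqrt\lbd)$. Since $E_{\mu_\lbd}[\tau_{\exit}(A(w_\lbd))]=1/q_{\exit}(A(w_\lbd))\sim e^{\lbd w_\lbd}/(\lbd^2 w_\lbd(1-w_\lbd))$, the ratio is $O(\log\lbd\cdot\lbd^{3/2}e^{-\lbd w_\lbd/2})$, vanishing whenever $\liminf_\lbd \lbd w_\lbd/\log\lbd$ eventually exceeds $3$.

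For condition (i), the key difficulty is that $\mu_\lbd$ concentrates on configurations where the unique large gap has just been filled by an added point $u$ whose early removal would reopen a gap of size $\ge w_\lbd$. I fix $t_0=1$ and decompose $P_{\mu_\lbd}(\tau_{A(w_\lbd)}<\eta_\lbd)\le P_{\mu_\lbd}(\tau_{A(w_\lbd)}\le t_0)+P_{\mu_\lbd}(t_0<\tau_{A(w_\lbd)}<\eta_\lbd)$. For the first term, a hazard-rate computation in the construction of Lemma \ref{lem:renprop} (the added point $u$ decays at rate $1$, independent new points land in the formerly empty interval of size $v_r-v_\ell=w_\lbd+\mathrm{Exp}(\lbd)$ at rate $\lbd w_\lbd+O(1)$, and even given a single arrival its position fails to prevent a post-removal gap $\ge w_\lbd$ with conditional probability $O(1/(\lbd w_\lbd))$) gives probability at most $O(1/(\lbd w_\lbd))$ for the "immediate re-entry" event; re-entries via removals of points in the $\alpha\lbd$-dense complement of the gap (ensured by Lemma \ref{lem:piAw}(ii) and Lemma \ref{lem:alphadense}) contribute $O(\lbd^2 w_\lbd e^{-\lbd w_\lbd})=o(1)$ once $\liminf_\lbd \lbd w_\lbd/\log\lbd$ exceeds $2$. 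For the second term, the independent Poisson component $W_\lbd(t_0)\sim \mathrm{Ppp}(\lbd(1-e^{-1}))$ from the construction in the proof of Lemma \ref{lem:renprop} is $\alpha'\lbd$-dense at scale $w_\lbd/2$ by Lemma \ref{lem:alphadense} with probability $1-o(1)$, so that $X_\lbd(t_0)\supset W_\lbd(t_0)$ is as well; Lemma \ref{lem:denstime} applied at $t_0$ via the strong Markov property then gives $\tau_{A(w_\lbd)}-t_0\ge e^{c\alpha'\lbd w_\lbd/4}$ with high probability, dominating $\eta_\lbd=O(\log\lbd)$ controlled by Lemma \ref{lem:renest}(d).

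The main obstacle is step 1 of condition (i): rigorously justifying the hazard-rate bound for the immediate re-entry event requires carefully tracking the joint law of the decay time of $u$, the random size $v_r-v_\ell$ of the formerly empty interval, and the uniform Poisson arrivals in that interval, all of which are correlated with the support of $\mu_\lbd$; the argument must also quantify the "density compensation" after one or more new arrivals land in the interval before $u$ is removed.
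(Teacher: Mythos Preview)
Your handling of condition (iii) matches the paper exactly. For condition (ii), your Cauchy--Schwarz route is valid but considerably cruder than the paper's: the paper first proves, as part (a) of its treatment of condition (i), that $P_{\mu_\lbd}(\eta_\lbd>2C_1\log\lbd)\to 0$ directly from the density bound $d\mu_\lbd/d\pi_\lbd\big|_{S_n}\le(\lbd+n)/q_{\exit}(A(w_\lbd))$ of Lemma~\ref{lem:numudens} together with Poisson tail estimates on $|X|$, then integrates to get $E_{\mu_\lbd}[\eta_\lbd]=O(\log\lbd)$. This is sharper and reuses the same machinery needed anyway for condition (i); your bound $O(\log\lbd\cdot e^{\lbd w_\lbd/2}/\sqrt\lbd)$ still suffices but forces a larger threshold on $\liminf_\lbd \lbd w_\lbd/\log\lbd$.

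The genuine gap is your first term for condition (i), as you recognise. The hazard-rate sketch does not control re-entry events that involve the boundary points $v_\ell,v_r$ (or their neighbours) dying, nor does it cleanly handle the combinatorics of how many and where the new arrivals must land in $(v_\ell,v_r)$ before $v_m$ decays. The paper sidesteps the split at $t_0$ altogether and proves directly that $P_{\mu_\lbd}(\tau_{A(w_\lbd)}>e^{c\lbd w_\lbd/4})\to 1$. The two devices that make this work are: (a) the left $I$-contraction $^IX$ with $I=(v_\ell,v_r)$, which by Lemma~\ref{lem:piAw}(2) dominates a Ppp$(\lbd)$ and is therefore $\alpha\lbd$-dense at scale $w_\lbd/2$ with high probability, so by Lemma~\ref{lem:denstime} no gap appears ``outside'' for an exponentially long time; and (b) a partition of the former gap $(v_\ell,v_r)$ into four equal sub-intervals of length $\le w_\lbd/2$. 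For each sub-interval the first arrival time $\sigma_j$ is $\mathrm{Exp}(\lbd(v_r-v_\ell)/4)$, so $P(\zeta<\sigma_j)\le 4/(\lbd w_\lbd)$ for $\zeta$ the death time of $v_m$; a union bound over $j$ gives $P(\zeta<\max_j\sigma_j)=O(1/(\lbd w_\lbd))$. Once all four sub-intervals are occupied, Lemma~\ref{lem:countest}(b),(d) keeps each occupied for time $e^{c\lbd w_\lbd/4}$ with high probability. On the event $\{\sigma\le\zeta\}$ one then has $\tau_{A(w_\lbd)}\ge\tau_1\wedge\tau_2$, which is the desired lower bound. This four-quarters argument is exactly what resolves your ``density compensation'' obstacle, and it simultaneously absorbs the boundary cases because the left-contraction stitches the complement together without reference to the individual points $v_\ell,v_r$.

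Your $W_\lbd(t_0)$ idea for the second term is nice and would work, but note it still requires bounding $|X_\lbd(0)|$ under $\mu_\lbd$ (to control $\eta_\lbd$), which again is most cleanly done via Lemma~\ref{lem:numudens}.
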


\begin{proof}
\textit{Condition (i). } Let $\mu_\lbd$ be as given in  Lemma \ref{lem:numudens}. We wish to show that $P_{\mu_\lbd}(\eta_\lbd < \tau_{A(w_\lbd)})\to  1$ as $\lbd\to\infty$, for which it suffices that
\enumalph
\item for large $C>0$, $P_{\mu_\lbd}(\eta_\lbd < C \log \lbd) \to 1$ as $\lbd\to\infty$ and that
\item for some $c>0$, $P_{\mu_\lbd}(\tau_{A(w_\lbd)} > e^{c \lbd w_\lbd/4}) \to 1$.
\enumend

For (a) we will use the bound on $d\mu_\lbd/d\pi_\lbd$ from Lemma \ref{lem:numudens} together with Lemma \ref{lem:renest} (d). For $C_1>0$ to be determined, first decompose and bound as
\begin{align}
P_{\mu_\lbd}(\eta_\lbd > 2C_1\log\lbd) \le P_{\mu_\lbd}(\eta_\lbd > 2C_1\log \lbd \mid N_\lbd(0)\le C_1\lbd) + \sum_{n=\lf C_1 \lbd\rf+1}^{\infty} P_{\mu_\lbd}(N_\lbd(0)=n). 
\end{align}
We need to show each of the two parts on the right-hand side tends to $0$ as $\lbd\to\infty$.\\

\noindent\textit{First part.} If $n\le C_1\lbd$ and $C_1>1$ then for large $\lbd$, $2C_1\log\lbd > \log(n \vee \sqrt{\lbd}) + C_1\log\lbd$ so $P_{\nu_\lbd}(\eta_\lbd > 2C_1\log\lbd \mid N_\lbd(0)\le C_1\lbd) \le P_{\mu_\lbd}(\eta_\lbd > \log(N_\lbd(0)\vee \sqrt{\lbd}) + C_1\log\lbd \mid N_\lbd(0)\le C_1\lbd)$, which by Lemma \ref{lem:renprop} (d) is $\le Ce^{-cC_1\log\lbd} = C\lbd^{-cC_1}$ for some $c,C>0$.\\

\noindent\textit{Second part. }By the estimates on $q_\exit(A(w_\lbd))$ and $\pi_\lbd(A(w_\lbd))$ in Lemma \ref{lem:exitest}, \ref{lem:piAw} respectively and since $\lbd w_\lbd\to\infty$ and $\limsup_\lbd w_\lbd <1$ by assumption, for large $\lbd$, $q_{\exit,\lbd}(A(w_\lbd)) \ge e^{-\lbd w_\lbd}$. Combining with the bound on $d\mu_\lbd/d\pi_\lbd$ from Lemma \ref{lem:numudens}, for large $\lbd$
\[P_{\mu_\lbd}(N_\lbd(0)=n) \le (\lbd+n)e^{\lbd w_\lbd}P_{\pi_\lbd}(N_\lbd(0)=n) = (\lbd+n)e^{\lbd w_\lbd} \frac{\lbd^n}{n!} e^{-\lbd}.\]
Bounding by an integral, $\log n! \ge (n-1)(\log(n-1)-1)$, so for large $\lbd$
\begin{align*}
\log P_{\mu_\lbd}(N_\lbd(0)=n) &\le \log(\lbd+n) - \lbd (1-w_\lbd) + n\log\lbd - (n-1)(\log (n-1) - 1)\\
&\le 2\log(\lbd+n) - (n-1)(\log(n-1)-(1+\log\lbd)).
\end{align*}
If $n\ge C_1\lbd$ for large $C_1$ then
\[\log P_{\mu_\lbd}(N_\lbd(0)=n) \le 2\log(2n) - (n-1)\log(C_1)/2 \le -n\log(C_1)/3\]
for large $n$, so for large $\lbd$
\[\sum_{n=\lf C_1\lbd\rf+1}^\infty P_{\mu_\lbd}(N_\lbd(0)=n) \le \sum_{n=\lf C_1\lbd\rf+1}^\infty(C_1)^{-n/3} = O\left( C_1^{-C_1\lbd/3}\right),\]
which tends to $ 0$ as $\lbd\to\infty$, completing the proof of (a).\\

For (b), we first show how to obtain a $\mu_\lbd$-distributed set by biasing and then adding a point to a $\pi_\lbd$-distributed set, and also upperbound the bias. Use $x,y,z$ for points in $\R_+$ and $X,Y,Z$ for discrete subsets of $\R_+$. Then
\begin{align*}
& \mu_\lbd(B) = \int_{Y \in S}\xi_\lbd(dY)\frac{q_\lbd(Y,B\cap A(w_\lbd)^c)}{q_\lbd(Y,A(w_\lbd)^c)} \quad \text{where} \quad  \\
& \xi_\lbd(dY) = \1(Y \in A(w_\lbd))\pi_\lbd(dY)\frac{q_\lbd(Y,A(w_\lbd)^c)}{q_{\exit,\lbd}}.
\end{align*}
With $v_\ell=\inf\{x\colon Y\cap(x,x+w_\lbd)=\emptyset\}$ and $v_r=1\wedge \inf (Y \cap (v_\ell,\infty))$, if $Y\in A(w_\lbd)$, equivalently, $v_\ell \le 1-w$, then the outgoing transition rate satisfies
\begin{align}\label{eq:qlbdAc}
q_\lbd(Y,A(w_\lbd)^c)=\lbd (w_\lbd-(v_r-v_\ell-w_\lbd))_+ = \lbd(2w_\lbd-(v_r-v_\ell))_+,
\end{align}
so using the estimate of $q_{\exit,\lbd}$ from Lemma \ref{lem:exitest} and noting the conditioning on $A(w_\lbd)$ below, uniformly over $Y\in A(w_\lbd)$ as $\lbd\to\infty$,
\begin{align*}
\frac{d\xi_\lbd(Y)}{d\pi_\lbd(Y \mid A(w_\lbd))} \bigg|_{A(w_\lbd)} \sim 2 -\frac{v_r-v_\ell}{w_\lbd},
\end{align*}
and in particular, for large $\lbd$ the density function of $\xi_\lbd$ with respect to $\pi_\lbd(\cdot \mid A(w_\lbd))$ is bounded above by $3$, and so for large $\lbd$
\begin{align}\label{eq:xipidens}
\xi_\lbd(B) \le 3 \pi_\lbd(B\mid A(w_\lbd)) \ \ \text{for measurable} \ \ B.
\end{align}
For $Y\in A(w_\lbd)$ satisfying $v_r-v_\ell<2w$, the transition probability measure taking $\xi_\lbd$ to $\mu_\lbd$,
\[\frac{q_\lbd(Y,\cdot \cap A(w_\lbd)^c)}{q_\lbd(Y,A(w_\lbd)^c)}\]
has distribution $Y\cup\{v_m\}$ where $v_m$ is uniform on $[v_r-w_\lbd,v_\ell+w_\lbd]$; if $v_r-v_\ell\ge 2w$ then $q_\lbd(Y,A(w_\lbd)^c)=0$.\\

For what follows it will be convenient to extend point sets to $\R_+$; note that if $\tilde \pi_\lbd$ is a Ppp with intensity $\lbd$ on $\R_+$ and $\tilde \xi_\lbd, \tilde \mu_\lbd$ are defined from $\tilde \pi_\lbd$ using the formulas above, with $q_\lbd(Y,A(w_\lbd)^c)$ defined from \eqref{eq:qlbdAc} and the definition of $A(w_\lbd)$ unchanged, then $\tilde \xi_\lbd(\{X\colon X\cap [0,1] \in B) = \xi_\lbd(B)$ for measurable $B$ and similarly for $\mu_\lbd$. The dynamic Ppp can also be extended to $\R_+$ by running an independent copy on each interval $[n,n+1]$, $n\in \N$, and its restriction to $[0,1]$ coincides with the dynamic Ppp on $[0,1]$.\\

So, let $Y$ have distribution $\tilde \xi_\lbd$ with $v_\ell,v_r$ as above and conditioned on $Y$ let $v_m$ be uniform on $[v_r-w_\lbd,v_\ell+w_\lbd]$, so that $Y\cup\{v_m\}$ has distribution $\tilde \mu_\lbd$ and let $(X_\lbd(t))_{t \in \R_+}$ be the dynamic Ppp on $\R_+$ with intensity $\lbd$ and initial value $X_\lbd(0)=Y\cup \{v_m\}$, with $P$ denoting the measure of $X_\lbd$.
With $I=(v_\ell,v_r)$, let $\tau_1 = \inf\{t\colon \,^I X(t) \in A(w_\lbd/2)\}$ and with $\Delta=v_r-v_\ell$, for $j=1,\dots,4$ let $M_j(t) = |X_\lbd(t) \cap (v_\ell+(j-1)\Delta, v_\ell + j \Delta)|$, let $\sigma_j=\inf\{t\colon M_j(t)>0$ and let $\tau_{2,j}=\inf\{t>\sigma_j\colon M_j(t)=0\}$. Let $\sigma=\min_j \sigma_j$, $\tau_2=\min_j \tau_{2,j}$ and $\zeta = \inf\{t\colon v_m \notin X_\lbd(t)\}$. If $v_r-v_\ell<2w_\lbd$, which holds $\tilde\mu_\lbd$ almost surely, then $\Delta \le w_\lbd/2$. If in addition $\sigma \le \zeta$ then $\tau_{A(w_\lbd)} \ge \tau_1 \wedge \tau_2$. So, (b) will be proved if we can show that for some $c>0$, as $\lbd\to\infty$
\enumar
\item $P(\tau_1 \le e^{c \lbd w_\lbd/4}) \to 0$,
\item $P(\tau_2 \le e^{c \lbd w_\lbd/4})\to 0$ and
\item $P(\zeta<\sigma)\to 0$.
\enumend
We prove these in order.
\enumar
\item By statement 2 in Lemma \ref{lem:piAw}, if $Y$ has distribution $\tilde \pi(\cdot \mid A(w_\lbd)$ and $I$ is defined from $Y$ as above then $^IY$ dominates a Ppp on $\R_+$ with intensity $\lbd$, and if $B$ denotes the event of being $\lbd\alpha$-dense at scale $w_\lbd/2$ on the interval $[0,1]$ then by Lemma \ref{lem:alphadense},
\[\tilde\pi_\lbd( ^IY \in B^c) \to 0 \ \text{as} \ \lbd\to\infty.\]
Using \eqref{eq:xipidens} the same is true with $\tilde \xi_\lbd$ in place of $\tilde \pi_\lbd$. Combining with Lemma \ref{lem:denstime} (using $w_\lbd/2$ in place of $w_\lbd$), the claim follows.
\item Use Lemma \ref{lem:countest} (b), then (d), on $M_j$, with $\rho=\lbd (v_r-v_\ell)\ge\lbd w_\lbd/4$, to find that $P(\tau_{2,j}\le \sigma_j+e^{c \lbd w_\lbd/4}) \le 4C/(\lbd w_\lbd) + Ce^{-c\lbd w_\lbd/4}=o(1)$, then take a union bound over $j\in \{1,\dots,4\}$.
\item $\zeta, \sigma_j$ is exponentially distributed with respective rate $1, \lbd(v_r-v_\ell)/4 \ge \lbd w_\lbd/4$, so $P(\zeta<\sigma_j) \le 1/(1+\lbd w_\lbd/4)$ and $P(\zeta<\sigma) = P(\zeta<\sigma_j \ \text{for some} \ j) \le 4/(1+\lbd w_\lbd/4) \to 0$ as $\lbd\to\infty$.
\enumend

\nid\textit{Condition (ii). }We want to show that $E_{\mu_\lbd}[\eta_\lbd] = o(E_{\mu_\lbd}[\tau_\exit(A(w_\lbd))])$. From the proof of part (a) of condition (i) above we have the estimate
\[P_{\mu_\lbd}(\eta_\lbd)>2C_1\log \lbd) \le C\lbd^{-cC_1} + CC_1^{-C_1\lbd/3}\]
for some $c,C>0$ and large $\lbd$, which for large $\lbd$ is $O(e^{-C_1})$, from which we deduce that
\[E_{\mu_\lbd}[\eta_\lbd]=O(\log\lbd).\]
On the other hand, by the estimates of $q_\exit(A(w_\lbd))$ and $\pi_\lbd(A(w_\lbd))$ from Lemmas \ref{lem:exitest}, \ref{lem:piAw},
\[E_{\mu_\lbd}[\tau_\exit(A(w_\lbd))] = 1/q_\exit(A(w_\lbd))\sim \frac{e^{\lbd w_\lbd}}{\lbd^2 w_\lbd(1-w_\lbd)}\ge \frac{e^{\lbd w_\lbd / 2}}{\lbd }\]
for large $\lbd$ (using $\lbd w_\lbd\to\infty$ and $e^x/x\ge e^{x/2}$ for large $x$) and if $\liminf_\lbd \lbd w_\lbd / \log\lbd >4$ the above is $\ge \lbd$ for large $\lbd$, completing the verification of condition (ii).\\

\nid\textit{Condition (iii). } From Lemma \ref{lem:piAw}, $\pi_\lbd(A(w_\lbd))\sim \lbd (1-w_\lbd)e^{-\lbd w_\lbd}\to 0$ as $\lbd\to\infty$ if $\lbd w_\lbd / \log\lbd > 1$.
\end{proof}

\begin{proof}[Proof of Proposition \ref{prop:iv}]
We prove statements 1 and 2 in that order.
\enumar
\item Let $C=2\limsup_\lbd \log(N_\lbd(0))/\lbd$ and let $t_\lbd=3C\lbd/4$, then use Lemma \ref{lem:renest} (a) with $t_0=t_\lbd$ and use Lemma \ref{lem:renest} (c) with $t_0=0$ and $t=(2/c)\log \lbd$. By assumption, $N_\lbd(0) e^{-t_\lbd} \to 0$ for each $\ep>0$ and $\lbd e^{-ct}=1/\lbd\to 0$, so $P(\eta_\lbd > t+t_\lbd\to 0$ as $\lbd\to\infty$. Since $t=o(t_\lbd)$, $P(\eta_\lbd > C\lbd)\to 0$ as $\lbd\to\infty$.
\item By the $\alpha$-dense assumption and Lemma \ref{lem:denstime} it is enough to show that for fixed $\ep>0$ and large enough $\liminf_\lbd \lbd w_\lbd/\log\lbd$, $P(\eta_\lbd \le e^{\ep \lbd w_\lbd}) \to 1$ as $\lbd\to\infty$. If $\liminf_\lbd \lbd w_\lbd/\log \lbd> 1/\ep $ then for any $C>0$, $e^{\ep \lbd w_\lbd} > C\lbd$ for large $\lbd$ so the claim follows from statement 1.
\enumend
\end{proof}

\begin{proof}[Proof of Proposition \ref{prop:main}]
Statement (i) of Theorem \ref{thm:main} with initial distribution $\nu_\lbd$ follows directly from Lemma \ref{lem:prop1cond} and Proposition \ref{prop:exp}. For statement (ii), use Lemma \ref{lem:exitest}, \ref{lem:prop3cond} which, together with ergodicity of the dynamic Ppp which is proved in Lemma \ref{lem:dynerg}, verify the conditions of Propositions 2 and 3, to find that $E_{\nu_\lbd}[\tau(A(w_\lbd))] \sim 1/q_\exit(A(w_\lbd))$, then use the estimate of $q_\exit(A(w_\lbd))$ from Lemma \ref{lem:exitest} combined with the estimate of $\pi_\lbd(A(w_\lbd))$ from Lemma \ref{lem:piAw} to conclude.
\end{proof}

\section{Estimates of $\pi_\lbd$}\label{sec:piest}

Here we prove Lemma \ref{lem:piAw}, the estimates on $\pi_\lbd(A(w_\lbd))$ and the conditional distribution $\pi_\lbd(\cdot\mid A(w_\lbd))$, for which we'll need a couple of preliminary results. We begin with the following obvious result, that we state without proof. It is the analogue, for i.i.d.~sequences, of the Poisson thinning property.

\begin{lemma}\label{lem:iidthin}
Let $(y_n)_{n\ge 1}$ be an i.i.d.~sequence of random variables on $\R$ and $B\subset \R$ a Borel set, and let $j_0=k_0=0$ and $j_i=\inf\{n>j_{i-1}\colon y_n \notin B\}$, $k_i=\inf\{n>k_{i-1}\colon y_n\in B\}$. Then $(k_i-k_{i-1})_{i\ge 1}$ is an i.i.d.~geometric$(P(y_1\in B))$ sequence and conditioned on $(k_i)$, $(y_{j_i})_{i\ge 1}$ and $(y_{k_i})_{i\ge 1}$ are independent i.i.d.~sequences, with the distribution of $y_{j_i}$, $y_{k_i}$ equal to the distribution of $y_1$ conditioned on $\{y_1\notin B\}$, $\{y_1\in B\}$, respectively.
\end{lemma}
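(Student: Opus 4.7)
The plan is to reduce everything to the elementary observation that the Bernoulli indicators $X_n := \1(y_n \in B)$ form an i.i.d.~sequence on $\{0,1\}$ with parameter $p := P(y_1\in B)$. The sequence $(k_i)$ records the positions of the successive $1$'s in $(X_n)$, and $(j_i)$ records the positions of the $0$'s, so both are measurable with respect to $\sigma((X_n))$, and conversely $(X_n)$ is determined by $(k_i)$. In particular, $\sigma((k_i)) = \sigma((X_n))$, and the conditioning in the statement may be interpreted against either $\sigma$-algebra.

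For the first assertion, $k_i - k_{i-1}$ is the waiting time between the $(i-1)^{\text{st}}$ and $i^{\text{th}}$ success in an i.i.d.~Bernoulli$(p)$ process. A direct computation gives $P(k_i - k_{i-1} = m \mid k_1,\dots,k_{i-1}) = (1-p)^{m-1}p$ for every $m\ge 1$, from which $(k_i - k_{i-1})_{i\ge 1}$ is i.i.d.~geometric$(p)$.

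For the second assertion, I would condition on $\sigma((X_n)) = \sigma((k_i))$. Since $X_n$ is a deterministic function of $y_n$, independence of the $(y_n)$ gives, for any bounded measurable $f_1,\dots,f_N$ and any realization $(x_n)$,
\begin{align*}
E\!\left[\prod_{n=1}^N f_n(y_n)\, \Big|\, X_m = x_m \ \forall m\le N\right] = \prod_{n=1}^N E\!\left[f_n(y_1) \mid \1(y_1\in B) = x_n\right].
\end{align*}
This identity, extended from products to the full joint law by a monotone class argument, shows that conditionally on $(X_n)$ the $(y_n)$ remain independent, with each $y_n$ having the law of $y_1$ conditioned on $\{y_1\in B\}$ or $\{y_1\notin B\}$ according to whether $X_n=1$ or $0$. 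Partitioning the indices via $\{n\colon X_n = 1\} = \{k_i\}_{i\ge 1}$ and $\{n\colon X_n = 0\} = \{j_i\}_{i\ge 1}$ yields the claimed conditional independence of $(y_{j_i})$ and $(y_{k_i})$ together with the stated common conditional distributions.

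There is no real obstacle here, which is why the author calls the result obvious and states it without proof; the only care needed is in identifying $\sigma((k_i))$ with $\sigma((X_n))$ and invoking the conditional product formula at the appropriate level of generality.
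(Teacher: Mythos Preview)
Your argument is correct; the paper explicitly states this lemma without proof, calling it ``the following obvious result,'' so there is no proof in the paper to compare against. Your reduction to the Bernoulli indicator sequence $X_n = \1(y_n\in B)$, the identification $\sigma((k_i)) = \sigma((X_n))$, and the conditional product formula together constitute a clean and complete justification.
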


We also require the following sufficient condition for a point set with given spacing to stochastically dominate a Poisson point process.

\begin{lemma}\label{lem:ratedom}
Let $(w_n)$ be a sequence of $\R_+$-valued random variables, with $t_n:=\sum_{i=1}^n w_i$, and define
\[r_n(t) = -\liminf_{s \to 0^+}s^{-1}\log P(w_n > t+s \mid w_n>t, w_1,\dots,w_{n-1}).\]
If $P(r_n(t) \ge \lbd \ \forall n\ge 1, \ \forall t\in \R_+)=1$ then with $\zeta:=\lim_{n\to\infty} t_n \in [0,\infty]$ the point process $\{t_n\}_{n\ge 1}$ stochastically dominates a Poisson point process on $[0,\zeta)$ with rate $\lbd$, in the sense that there is a coupling of $\{t_n\}$ with a Ppp $X$ on $\R_+$ with rate $\lbd$ such that $\{t_n\} \supset X\cap [0,\zeta)$ with probability $1$.
\end{lemma}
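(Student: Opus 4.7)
The plan is to construct the coupling through a Poisson random measure $\Pi$ on $\R_+\times[0,\infty)$ with intensity $dt\,du$: the Ppp $X$ of rate $\lbd$ will be the projection onto the first coordinate of $\Pi\cap(\R_+\times[0,\lbd])$, which is a Ppp of rate $\lbd$ by standard thinning, and the sequence $(t_n)$ will be re-realized on the same space by the inhomogeneous Poisson thinning rule associated with the conditional hazard rate of each $w_n$. The hypothesis $r_n(t)\ge\lbd$ makes each such hazard rate dominate $\lbd$ pointwise, so the admissibility region for $t_n$ always contains the horizontal strip $(t_{n-1},\infty)\times[0,\lbd]$, forcing each point of $X$ to coincide with some $t_n$. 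First I would translate the hypothesis into the pathwise survival bound
\[ S_n(t):=P(w_n>t\mid w_1,\dots,w_{n-1})\le e^{-\lbd t}, \]
equivalently, with $\Lambda_n:=-\log S_n$, the bound $\Lambda_n(t)\ge\lbd t$; this is a standard Dini-derivative argument for the right-continuous, nondecreasing function $\Lambda_n$, whose lower right Dini derivative is at least $\lbd$ everywhere by hypothesis while $\Lambda_n(0)=0$.

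Next I would re-realize $(w_n)$ on an enlarged space carrying $\Pi$, assuming for simplicity that each $\Lambda_n$ is absolutely continuous with density $\mu_n\ge\lbd$ a.e. Set $t_0=0$ and, inductively given $\F_{n-1}:=\sigma(w_1,\dots,w_{n-1})$, define
\[ t_n:=\inf\bigl\{t>t_{n-1}:(t,u)\in\Pi,\ u\le\mu_n(t-t_{n-1})\bigr\}, \]
where $\mu_n$ depends on the past through $\F_{n-1}$. The standard Poisson-thinning computation yields that $w_n:=t_n-t_{n-1}$ has conditional survival function $S_n$ given $\F_{n-1}$, so the joint law of $(w_n)$ is preserved. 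An easy induction shows that $\F_n$ depends only on $\Pi$ restricted to $(0,t_n]\times\R_+$; by the strong Markov property for Poisson random measures, $\Pi$ restricted to $(t_n,\infty)\times\R_+$ remains a Ppp independent of $\F_n$, and the construction iterates.

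For the set-containment conclusion, fix any $s\in X\cap[0,\zeta)$; since $t_n\nearrow\zeta>s$ there is a unique $n$ with $t_{n-1}<s\le t_n$. The $\Pi$-point producing $s$ in $X$ has height $u_s\le\lbd\le\mu_n(s-t_{n-1})$, so $s$ is admissible for the infimum defining $t_n$, giving $t_n\le s$; combined with $s\le t_n$ this forces $s=t_n$. The main technical obstacle is the absolute-continuity assumption on each $\Lambda_n$, which is needed to define the density $\mu_n$: in the general case $\Lambda_n$ may have a singular part or atoms, and one must either enlarge the admissibility region via auxiliary uniform marks placed at the atoms of $\Lambda_n$, or approximate each $\Lambda_n$ by absolutely continuous cumulative hazards dominating $\lbd t$ and pass to a limit, with the set-containment inequality preserved at each stage.
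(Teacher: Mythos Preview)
Your Poisson-embedding construction is correct and the containment argument is clean, but the paper's route is genuinely different and worth knowing because it sidesteps exactly the absolute-continuity obstacle you flag. Having established the same survival bound $S_n(t)\le e^{-\lbd t}$, the paper observes that this is a \emph{factorization}: $S_n(t)=e^{-\lbd t}G_n(t)$ with $G_n$ nonincreasing and $G_n(0)=1$, hence itself a survival function. This means each $w_n$ can be realized, given the past, as the minimum of two independent variables: an $\mathrm{Exp}(\lbd)$ and a variable with survival $G_n$. The paper couples by taking the exponential piece to be the waiting time from $t_{n-1}'$ to the next point of the Ppp $X$, and the $G_n$ piece from an auxiliary uniform via the generalized inverse $G_n^{-1}(u_n)$. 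Then $t_n'\le$ the next $X$-point after $t_{n-1}'$, with equality exactly when the exponential wins the minimum, so no $X$-point is skipped on $[0,\zeta)$.

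What each approach buys: the paper's minimum-decomposition is shorter and works for arbitrary conditional survival functions satisfying $S_n\le e^{-\lbd t}$, with no density and no approximation step---the generalized inverse handles atoms and singular parts automatically. Your Poisson-embedding via hazard thinning is the more flexible template (it extends naturally to space- or time-varying dominating rates, marked processes, etc.), but in this bare setting the AC requirement is an unnecessary complication that the factorization avoids entirely. A small side remark: $r_n(t)$ as defined is the \emph{upper} right Dini derivative of $\Lambda_n$, not the lower; the monotonicity conclusion still holds since $\Lambda_n$ is right-continuous and nondecreasing, but it is worth stating correctly.
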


\begin{proof}
Define $G_n(t)=e^{\lbd t}P(w_n>t \mid w_1,\dots,w_{n-1}) \ge 0$. Then
\[\log G_n(t) = \lbd t + \log P(w_n>t\mid w_1,\dots,w_{n-1}),\]
so
\[-\liminf_{s\to 0^+} s^{-1}\log G_n(t+s) = -\lbd + r_n(t)\ge 0\]
for all $n,t$ almost surely, since $r_n(t)\ge\lbd$ by assumption. Since $\log G_n$ has non-positive upper derivative it is non-increasing, so $G_n$ is non-increasing and since $G_n(0)=1$, $G_n(t)\le 1$. Let $X$ be a Poisson point process with intensity $\lbd$ and $(u_i)$ an independent, i.i.d.~uniform$[0,1]$ sequence. Let $s_n= G^{-1}(u_n)$ so that $P(s_n>t)=G(t)$. \\

Define $(w_i'),(t_i')$ as follows: let $t_n'=\sum_{i=1}^n w_i'$ and conditioned on $w_1',\dots,w_{n-1}'$ let $y_n=\inf (X\cap[t_{n-1}',\infty))$ and let $w_n'= s_n \wedge y_n$. Then
\begin{align*}
P(w_n'>t \mid w_1', \dots,w_{n-1}')=P(y_n>t)P(s_n>t)=e^{-\lbd t}e^{\lbd t}G_n(t)=G_n(t),
\end{align*}
so $(w_n'),(t_n')$ has the same distribution as $(w_n),(t_n)$. Let $Y=\{t_n'\}_{n\ge 1}$. By construction, $Y\supset X\cap [0,\zeta)$, which is the claimed stochastic domination.
\end{proof}

\begin{proof}[Proof of Lemma \ref{lem:piAw}]
For tidiness we'll suppress the subscript $\lbd$ from $w_\lbd,\tilde \pi_\lbd$ writing them as $w,\tilde \pi$. We'll prove \eqref{eq:piAw}, then statements 1-3 of the lemma. We begin by representing $A(w)$ in the context of Lemma \ref{lem:iidthin}. Let $(y_i)$ be an i.i.d.~exponential$(\lambda)$ sequence with partial sums $x_i=\sum_{j\le i}y_j$, so that $X=\{x_i\}_i$ is a Ppp with intensity $\lbd$ on $\R_+$. With $k_w:=\min\{i\colon y_i\ge w\}$, the gap event can be written as $A(w) = \{x_{k_w-1}\le 1-w\}$.
Using Lemma \ref{lem:iidthin} with $B=[w,\infty)$, $k_w$ is geometric$(e^{-\lbd w})$ and conditioned on $k_w$, with $(j_i)$ as in Lemma \ref{lem:iidthin} and $z_i=y_{j_i}$, $j_i=i$ for $i<k_w$ and $x_{k_w-1}=\sum_{i<k_w}z_i$, where $(z_i)$ has distribution 
\begin{align}\label{eq:y_cnd}
P(z_i\le y)=\frac{1-e^{-\lambda y}}{1-e^{-\lambda w}}\wedge 1.
\end{align}
Let $u_n=\sum_{i=1}^nz_i$ and $N(x) = \max\{n\colon u_n \le x\}$, then
\begin{align}\label{eq:Awform}
A(w)=\{N(1-w)\ge k_w-1\}.
\end{align}
Again by Lemma \ref{lem:iidthin}, $(z_i)$ is independent of $k_w$, so the process $N$, and in particular $N(1-w)$, are independent of $k_w$.\\

Next we show \eqref{eq:piAw}. We'll first estimate $N(1-w)$, then the conditional probability $P(k_w-1\le N(1-w))$. Let $\mu_\lambda ,\sigma^2_\lambda $ denote the mean and variance of the distribution \eqref{eq:y_cnd}. As $\lambda \to\infty$, if $\lambda w\to\infty$ then $\lambda \mu_\lambda \to 1$ and $\lambda ^2\sigma^2_\lambda  \to 1$, the mean and variance of exponential$(1)$. For $\ep>0$ let $n_w^-(\ep)=\lfloor \lambda (1-w-\ep) \rfloor$ and $n_w^+(\ep) = \lfloor \lambda (1-w+\ep)\rfloor$. Chebyshev's inequality implies that $P(u_{n_w^-(\ep)} \le 1-w),P(u_{n_w^+(\ep)} > 1-w) \to 1$ as $\lambda \to\infty$. In particular, $P(n_w^-(\ep)\le N(1-w)\le n_w^+(\ep)) \to 1$ as $\lambda \to\infty$. Next, let $C_i=\{N(1-w) \in I_i\}$ with $I_0=[0,n_w^-(\ep))$, $I_1=[n_w^-(\ep),n_w^+(\ep)]$, $I_2 = (n_w^+(\ep), m\lambda )$ and $I_3=[m\lambda ,\infty)$ for $m$ to be determined. The above calculation gives $P(C_1)\to 1$ as $\lbd\to\infty$. Decomposing over $(C_i)_{i=0}^3$ and using \eqref{eq:Awform},
\[\tilde \pi(A(w)) = \sum_{i=0}^3 P(N(1-w)\ge k_w-1 \mid C_i)P(C_i).\]
For $n\in \N$,
\begin{align*}
P(N(1-w)\ge k_w-1 \mid N(1-w)=n) &= P(k_w-1 \le n) \\
&= 1-P(k_w >n+1)=1-(1-e^{-\lambda w})^{n+1}.
\end{align*}
As $\lambda \to\infty$, if $(n+1)e^{-\lambda w} \to 0$ then the above $\sim (n+1)e^{-\lambda w}$. In particular, if $\liminf_\lbd \lbd w / \log \lbd>1 $ then letting $\ep\to 0$ slowly as $\lambda \to\infty$,
\[P(N(1-w)\ge k_w-1 \mid C_1) \sim \lambda (1-w)e^{-\lambda w},\]
while for $i\in \{0,2\}$,
\[P(N(1-w)\ge k_w-1 \mid C_i) = O(m\lambda e^{-\lambda w}).\]
Since $P(C_1)\to 1$, $P(C_i)\to 0$ for $i \ne 1$. Using this and the above display for $i\in \{0,2\}$ as well as $P(N(1-w)\ge k_w-1 \mid C_3)\le 1$, if we can find a constant $m$ such that $P(C_3) = o(\lambda e^{-\lambda w})$ then \ref{eq:piAw} holds. To do this we use a large deviations estimate. Note that $C_3=\{u_{\lceil m\lambda  \rceil} \le 1-w\}$. The distribution in \eqref{eq:y_cnd} has density function $\lambda e^{-\lambda y}/(1-e^{-\lambda w})$ for $y\in [0,w]$. If $Y$ has this distribution and $\theta>0$ then integrating,
\begin{align}\label{eq:Ymgf}
E[e^{-\theta Y}] = \frac{\lambda }{\lambda +\theta}\frac{1-e^{-(\lambda +\theta)w}}{1-e^{-\lambda w}}.
\end{align}
Thus for a given $n,t$ and any $\theta>0$,
\[P(u_n \le t) = P( e^{-\theta x_n} \ge e^{-\theta t}) \le e^{\theta t} E[e^{-\theta u_n}] = e^{\theta t}(E[e^{-\theta Y}])^n.\]
If $ne^{-\lambda w} \to 0$ the second fraction in \eqref{eq:Ymgf} has $\limsup \le 1$ and
\[\limsup_\lbd P(u_n \le t) \le e^{\theta t}(1+\theta/\lambda )^{-n}.\]
The right-hand side, minimized with respect to $\theta$, gives $\exp(n-\lambda t - n\log(n/(\lambda t)))$. If $n=m\lambda $ the exponent is $\lambda (m-t)-m\lambda \log (m/t) = m\lambda (1-\log(m/t))- t\lambda $. If $m/t\ge 2e$ the above is at most $-m\lambda $. Taking $t=1-w$ and $m=2e$ we find that $\limsup_\lbd P(C_3) \le e^{-2e\lambda}=o(\lambda e^{-\lambda w})$ as desired, completing the demonstration of \eqref{eq:piAw}.\\

Next we establish points 1 and 2 from the statement of the lemma, recalling $v_\ell=\inf\{x\colon X\cap (x,x+w)=\emptyset\}$ and $v_r=\inf(X\cap(v_\ell,\infty))$, where $X$ is a Ppp with intensity $\lbd$ on $\R_+$. By definition, $v_\ell = x_{k_w-1} = \sum_{i<k_w}z_i$ and $v_r=x_{k_w}$ so $v_r-v_\ell=y_{k_w}$. Using Lemma \ref{lem:iidthin}, conditioned on $k_w$ and $(z_i)$, which together determine $v_\ell$, $y_{k_w}$ has the distribution of exponential$(\lbd)$ conditioned on being $\ge w$, so point 1 follows from the memoryless property.\\

For point 2 it suffices to show that (a) conditioned on $A(w_\lbd)$, $X\cap [0,v_\ell]$ dominates a Ppp on $[0,v_\ell]$ with intensity $\lbd$ and that (b) conditioned on $v_\ell,v_r$ and $X\cap [0,v_\ell]$ (which determine $A(w_\lbd)$), $X\cap (v_r,\infty)$ is a Ppp on $(v_r,\infty)$ with intensity $\lbd$ independent of $X\cap[0,v_\ell]$. We begin with (b). Since $k_w$ is a stopping time for the Markov chain $(y_i)$ and $(y_i)$ are i.i.d., conditioned on $k_w$ and $(y_1,\dots,y_{k_w})$, which determine $v_\ell,v_r$ and $X\cap [0,v_\ell]$, $(y_{k_w+i})_{i\ge 1}$ has the same distribution as $(y_i)_{i\ge 1}$, from which (b) follows. For (a), since $X\cap [0,v_\ell] = \{x_i\}_{i=1}^{k_w-1}= \{u_i\}_{i=1}^{k_w-1}$ (using notation from earlier in this proof), by Lemma \ref{lem:ratedom} it suffices to show that 
\begin{align}\label{eq:zcndrate}
\liminf_{s\to 0^+} - s^{-1}\log P(z_n>y+s \mid z_n>y , z_1,\dots,z_{n-1},N(1-w)\ge k_w-1)\ge \lbd
\end{align}
almost surely for all $n,y$. Recall that $(z_i)$ are i.i.d.~with distribution given by \eqref{eq:y_cnd}, so for $y<w$ we compute
\begin{align}\label{eq:zhaz}
- \frac{d}{dy} \log P( z_i > y) =\frac{\lbd}{1-e^{-\lbd(w-y)}} \ge \lbd.
\end{align}
Let $(\F_n)$ be the natural filtration of $(z_n)$, fix $i$, and let 
\[h(y,n)=P(N(1-w) \ge i \mid z_n>y, \F_{n-1}),\]
and note that $y\mapsto h(y,n)$ is non-increasing. Using Bayes' formula,
\begin{align*}
&P(z_n > y+s \mid z_n> y, N(1-w)\ge i, \F_{n-1}) \\
&= \frac{P(N(1-w)\ge i \mid z_n>y+s, \F_{n-1})P(z_n > y+s \mid z_n>y, \F_{n-1})}{P(  N(1-w)\ge i\mid z_n>y, \F_{n-1})}\\
&= \frac{h(y+s,n)}{h(y,n)}P(z_n > y+s \mid z_n>y, \F_{n-1})\\
&\le P(z_n > y+s \mid z_n>y, \F_{n-1})\\
&= P(z_n > y+s \mid z_n>y),
\end{align*}
using the i.i.d~property on the last step. Replacing $i$ with $k_w-1$ and noting $(z_n)$ and $k_w$ are independent, the same inequality holds, i.e., 
\begin{align}\label{eq:znlb}
&P(z_n > y+s \mid z_n> y, N(1-w)\ge k_w-1, \F_{n-1}) \nonumber \\
&\le P(z_n > y+s \mid z_n>y).
\end{align}
Using \eqref{eq:zhaz},
\begin{align}\label{eq:zcndhaz}
&\lim_{s\to 0^+} - s^{-1} \log P(z_n>y+s \mid  z_n>y)  \nonumber\\
&= \lim_{s\to 0^+} - s^{-1}(\log P(z_n>y+s) - \log P(z_n>y)) \nonumber\\
&= -\frac{d}{dy} \log P(z_n>y) \ge \lbd.
\end{align}
Taking $\liminf_{s\to 0^+}-s^{-1} \log (\cdot)$ of both sides of \eqref{eq:znlb} and using \eqref{eq:zcndhaz} gives \eqref{eq:zcndrate} and completes the proof.
\end{proof}

\section{Estimates of $N$}\label{sec:Nest}

\begin{proof}[Proof of Lemma \ref{lem:countest}]
We prove (a)-(d) in order. To prove (a) we will need to show that with $M:=N-\rho$ and $\sigma_-(a):=\inf\{t\colon |M(t)| < a\}$,
\begin{align}\label{eq:Nexp}
P(\sigma_-(a)>t) \le E[\,|M(0)|\, ]e^{-t}/a.
\end{align}
and that with $\sigma_+(a)=\inf\{t\colon |M(t)|\ge a\}$,
\begin{align}\label{eq:Nexp1}
\liminf_\rho P(\tau(\lf \rho\rf) < \sigma_+(2\sqrt{\rho}) \mid |M(0)|<\sqrt{\rho}) \ge 1/2,
\end{align}
and for some $c,C>0$ and all $\rho,t$,
\begin{align}\label{eq:Nexp2}
P(\sigma_+(2\sqrt{\rho})>t ) \le C_1e^{-c_1t}.
\end{align}
From these estimates, (a) is established as follows. Define the alternating sequence of stopping times $t_0=0$, $t_{2i+1} = \inf\{t>t_{2i}\colon |M(t)|<\sqrt{\rho}\}$ and $t_{2i} = \inf\{t>t_{2i-1}\colon |M(t)| \ge 2\sqrt{\rho}\}$ and note that $|M(t_i)|\le 2\sqrt{\rho}+1$ for $i\ge 1$. From \eqref{eq:Nexp} with $a = \sqrt{\rho}$,
\begin{align*}
P(\sigma_-(\sqrt{\rho})>t ) \le E[\,\rho^{-1/2}|M(0)|\,] e^{-t},
\end{align*}
from which we need two expressions, the first obtained by shifting $t$ by $w_0:=\log (E[ \,\rho^{-1/2} |M_0|\, ])$ and the second by restricting $|M(0)|$:
\enumar
\item $P(\sigma(a)>t + w_0) \le e^{-t}$ and
\item $P(\sigma_-(\sqrt{\rho})>t \mid |M(0)| \le 2\sqrt{\rho}+1) \le (2+o(1))e^{-t}.$
\enumend
With $s_i=t_i-t_{i-1}$, for $\theta<c_1 \wedge 1$,  there is $1<C_2<\infty$ such that, using 1 above, $E[e^{\theta (t_1-w_0)_+}] \le C_2$, and using \eqref{eq:Nexp2} for even $i$ and 2 above for odd $i$, $E[e^{\theta s_i}] \le C_2$ for all $i>1$. In particular,
\[E[e^{\theta(t_{2i}-w_0)_+}] \le (C_2)^{2i}.\]
Markov's inequality then implies
$P(t_{2i}\ge w_0+4\log(C_2) i / \theta) \le (C_2)^{-2i}$, so that given $t$, with $i=\lf \theta t / 4\log(C_2)\rf$, $P(t_{2i}\ge w_0+t) \le C_2^2e^{-\theta t/2} $. Let $I=\inf\{i\colon \tau(\lf\rho\rf)< t_{2i}\}$, so that from \eqref{eq:Nexp1}, $P(I>i)\le (1/2+o(1))^i \le (3/4)^i$ for large $\rho$. Given $t$, taking $i$ as above,
\begin{align*}
P(\tau(\lf\rho\rf)>t+w_0) &\le P(t_{2I} > w_0+t) \\
&\le P(I>i) + P(t_{2i}>w_0+t) \\
&\le (3/4)^i + C_2^2e^{-\theta t/2} \\
&\le (4/3)e^{-\theta t / 4\log(C_2)} + C_2^2e^{-\theta t/2},
\end{align*}
and (a) holds with $c=\theta \min(1/2,1/(4\log C_2))$ and $C=(4/3)+C_2^2$. We now establish \eqref{eq:Nexp},\eqref{eq:Nexp1} and \eqref{eq:Nexp2}. For a process $X$ with compensator $X^p$ (i.e., $X^p$ is predictable and $X-X^p$ is a local martingale), when $X^p$ is almost surely absolutely continuous, $d X^p(t)/dt$ is called the drift of $X$. From its transition rates, the process $N$ has drift $\rho-N$, so $M:=N-\rho$ has drift $-M$. Since $M(t)$ and the function $e^t$ have zero quadratic covariation, by the product rule for semimartingales, see \cite[Lemma 3.4]{foxall_naming_2018}, $\phi(t):=e^tM(t)$ has drift $e^t M(t) + e^t(-M(t)) = 0$, i.e., $\phi$ is a martingale. Since $M$ jumps by at most $1$, the sign of $M$ is constant for $t\in [0,\sigma(1)]$ where $\sigma(a)=\inf\{t\colon |M(t)| < a\}$, and in particular, $|\phi(\cdot \wedge \sigma(a))|$ is a non-negative martingale for $a\ge 1$. If $|M(s)|\ge a$ for all $s\le t$ then $\sigma(a)>t$ and $|\phi(t \wedge \sigma(a))|=|\phi(t)| = e^t |M(t)|\ge e^ta$. Since $|\phi(0)|=|M(0)|$, Doob's inequality then gives
\begin{align*}
P(\sigma(a)>t) &= P(|M(s)|\ge a \ \forall s\le t) \nonumber \\
&\le P(\sup_s|\phi(s \wedge \sigma(a))| \ge e^ta) \le E[\,|M(0)|\, ]e^{-t}/a,
\end{align*}
which is \eqref{eq:Nexp}. 

For \eqref{eq:Nexp1},\eqref{eq:Nexp2} we need the notion of a natural scale, which is a function $y:\N\to \R$ such that $y(N)$ is a martingale. Solving for $Ay=0$ where $A$ is the generator of $N$, $y$ has the form $y(n)=\sum_{i=1}^n z(i)$ where $z(n+1)/z(n) = n/\rho$ is the reciprocal of the ratio of transition rates. Take $z(\lf\rho\rf)=1$. For \eqref{eq:Nexp1}, notice that $z$ is decreasing on $[0,\lf\rho\rf]$ and increasing on $[\lceil\rho\rceil,\infty)$, so
\begin{align}\label{eq:yhrm1}
y(\lceil \rho - \sqrt{\rho}\,\rceil) - y(\lf\rho -  \,2\sqrt{\rho}\,\rf) \ge y(\lf\rho\rf) - y(\lceil \rho - \sqrt{\rho}\,\rceil) \  \text{and} \nonumber \\
y(\lceil\rho + 2\sqrt{\rho}\,\rceil) - y(\lf\rho +  \sqrt{\rho}\rf) \ge y(\lf\rho +  \sqrt{\rho}\rf) - y(\lceil\rho\rceil).
\end{align}
Let $T = \tau(\lf\rho\rf) \wedge \sigma_+(2\sqrt{\rho})$ and define the stopped process $Y(t):=y(N(t\wedge T))$. For each $\rho$, $Y$ is bounded and $E[T]<\infty$ ($T$ is the hitting time of a set of accessible states in a finite state Markov chain) so optional stopping can be used and gives $E[Y(T)]=E[Y(0)]$. If $T=\tau(\lf\rho\rf)$ then $|N(T)-\rho|<1$ while if $T=\sigma_+(2\sqrt{\rho})$ then $|N(T)-\rho|\ge 2\sqrt{\rho}$. \\

If $|M(0)|<\rho$ $|N(0)-\rho|<\sqrt{\rho}$. We consider separately the cases $\lf\rho\rf < N(0) \le \lf \rho + \sqrt{\rho}\rf$ and $\lceil \rho-\sqrt{\rho} \,\rceil \le N(0)\le \lf\rho\rf$. Let $p = P(T=\tau(\lf\rho\rf)$, noting that $1-p=P(T=\sigma_+(2\sqrt{\rho})$. In the first case $E[Y(0)] \le y(\lf \rho + \sqrt{\rho}\rf)$, so on line 3, adding and subtracting $y(\lceil \rho\rceil)p$ and using the second line of \eqref{eq:yhrm1},
\begin{align*}
0 &= E[Y(T)]-E[Y(0)] \\
&\ge (y(\lf\rho\rf) - y(\lf \rho + \sqrt{\rho}\rf ))p + (y(\lceil \rho + 2\sqrt{\rho}\,\rceil) - y(\lf \rho + \sqrt{\rho}\rf ))(1-p) \\
&\ge (y(\lf\rho\rf)-y(\lceil\rho\rceil))p + (y(\lf \rho + \sqrt{\rho}\rf ) - y(\lceil \rho\rceil ))(1-2p).
\end{align*}
With $z(\lf\rho\rf)=1$, $z(\lceil\rho\rceil)=\lf\rho\rf/\rho \le 1$ so $|y(\lceil\rho\rceil)-y(\lf\rho\rf)| \le 1$ and since $z$ is increasing on $[\lceil\rho\rceil,\infty)$, $d:=y(\lf \rho + \sqrt{\rho}\rf ) - y(\lceil \rho\rceil )\ge \sqrt{\rho}-1$, so
\[1-d \ge - 2pd,\]
giving $p\ge (d-1)/2d = 1/2 - 1/(2d) = 1/2-o(1)$ as $\rho\to\infty$. In the second case $E[Y(0)] \ge y(\lceil \rho - \sqrt{\rho}\rceil)$; doing an analogous calculation as before and using the first line of \eqref{eq:yhrm1},
\begin{align*}
0 &= E[Y(T)]-E[Y(0)] \\
&\le (y(\lf\rho\rf) - y(\lceil \rho - \sqrt{\rho}\,\rceil ))p + (y(\lf \rho - 2\sqrt{\rho}\,\rceil) - y(\lceil \rho - \sqrt{\rho}\,\rceil ))(1-p) \\
&\le (y(\lf\rho\rf) - y(\lceil \rho - \sqrt{\rho}\,\rceil ))(2p-1),
\end{align*}
and since $y(\lf\rho\rf) \ge y(\lceil \rho - \sqrt{\rho}\,\rceil)$, $2p-1\ge 0$, i.e., $p\ge 1/2$, which completes the demonstration of \eqref{eq:Nexp1}. For \eqref{eq:Nexp2}, if $|n-\rho| \le 2\sqrt{\rho}$ then $|z(n+1)/z(n)- 1| \le \rho^{-1/2}$, so taking  $z(\lf\rho\rf)=1$, for $-2\sqrt{\rho} \le n-\rho\le 2\sqrt{\rho}+1$,
\[(1-\rho^{-1/2})^{2\sqrt{\rho}} \le |z(n)-1 | \le (1 + \rho^{-1/2})^{2\sqrt{\rho}+2},\]
and in particular, $\limsup_\rho \sup_{|n-\rho|\le 2\sqrt{\rho}} | \log(z(n))|=:C_3 < \infty$. Now, $y(N)$ has predictable quadratic variation
\[\langle y(N(t))\rangle = \int_0^t (\rho z(N(s)+1)^2 + N(s) z(N(s))^2)ds.\]
Defining now $Y(t):=y(N(t\wedge \sigma_+(2\sqrt{\rho})))$, $Y(t)$ has predictable quadratic variation at least $\rho e^{-2C_3} t\wedge \sigma_+(2\sqrt{\rho})$ for large $\rho$, so with $c_2=e^{-2C_3}$, $(Y(t)-Y(0))^2 - c_2 \rho t\wedge \sigma_+(2\sqrt{\rho})$ is a submartingale. For the same reasons as before, optional stopping can be used for $Y$ at time $\sigma_+(2\sqrt{\rho})$, and gives
\[E[(Y(\sigma_+(2\sqrt{\rho})) - Y(0))^2] - c_2 \rho E[\sigma_+(2\sqrt{\rho})] \ge 0.\]
Using the bound on $|\log(z_n)|$, for $|n-\rho| \wedge |m-\rho|\le 2\sqrt{\rho}+1$ and large $\rho$, $|y(m)-y(n)| \le e^{2C_3} |m-n|$ and
\[E[\sigma_+(2\sqrt{\rho})] \le (c_2 \rho ) ^{-1} e^{2C_3} (2(2\sqrt{\rho}+1))^2 = O(1).\]
Let $C_4=\limsup_\rho E[\sigma_+(2\sqrt{\rho})] < \infty$. Using Markov's inequality,
\[P(\sigma_+(2\sqrt{\rho}) \ge 2C_4) \le 1/2,\]
then using the Markov property and iterating, for integer $k>0$,
\[P(\sigma_+(2\sqrt{\rho}) \ge 2C_4k) \le (1/2)^k,\]
from which \eqref{eq:Nexp2} easily follows, completing the proof of (a).\\

For (b)-(d) we again use the natural scale $y$. Let $q_n=P(\tau(0)<\tau(\lf\rho\rf)\mid N(0)=n )$. Optional stopping applied to $y(N)$ implies
\[y(n) = q_n y(0) + (1-q_n) y(\lf\rho\rf).\]
This time, take $y(0)=0$. Then
\[q_n = \frac{y(n)-y(\lf\rho\rf)}{y(0)-y(\lf\rho\rf)} = \frac{y(\lf\rho\rf)-y(n)}{y(\lf\rho\rf)} = \frac{\sum_{k=n+1}^{\lf\rho\rf} z(k)}{\sum_{k=1}^{\lf\rho\rf} z(k)}.\]
Taking $z(1)=1$ and recalling $z(k+1)/z(k)=k/\rho$ we compute 
\[\log z(k) = -(k-1)\log \rho + \sum_{i=1}^{k-1} \log i.\]
Bounding the sum by an integral and noting $\log 3 > 1$,
\begin{align*}
\sum_{i=1}^{k-1} \log i &= \log 2 + \sum_{i=3}^{k-1} \log i \\
&\le \log 2 + k(\log k -1)- 3(\log 3 - 1) \\
&\le \log 2 + k(\log k - 1).
\end{align*}
Plugging in and highlighting the terms we'll need, for $2\le k\le \lf\rho\rf$,
\begin{align*}
\log z(k) &\le -\log \rho - (k-2)\log \rho/k -k + \log 2 + 2\log k \\
&\le -\log \rho -k + \log (2k^2).
\end{align*}
For (b) we need to show that $\inf_{n\ge 1} q_n = O(1/\rho)$. Since $z(1)=1$ by assumption, $\sum_{k=1}^{\lf\rho\rf} z(k) \ge 1$ so for $n\ge 1$,
\[q_n \le \sum_{k=n+1}^{\lf\rho\rf} z(k) \le \rho^{-1}\sum_{k=2}^\infty 2k^2 e^{-k}= O(1/\rho).\]
For (c) we want $\inf_{n\ge \alpha \rho} q_n = O(e^{-c\rho})$ for some $c>0$ that may depend on $\alpha$ but not $\rho$; for large $\rho$ and $n\ge \alpha \rho$,
\[q_n \le \sum_{k=n+1}^{\lf\rho\rf} z(k) \le \sum_{k=\lf\alpha \rho\rf}^{\lf\rho\rf} 2k^2 e^{-k}= O(e^{-\alpha \rho/2}).\]
For (d), note that if $N(0)=\lf\rho\rf$ the return times $(t_n)_{n\ge 1}$ defined by $t_0=0$ and 
\[t_{n+1}=\inf\{t>t_n\colon N(t)=\lf\rho\rf, \ N(s)\ne \lf\rho\rf \ \text{for some} \ s\in (t_n,t)\}\]
have $t_n-t_{n-1}\ge w_n:=\inf\{t>t_{n-1} \colon N(t)\ne \lf\rho\rf\}$, the time to jump out of state $\lf\rho\rf$ after the $n-1^{\text{st}}$ return. Letting $M=\inf\{n\colon N(t)=0 \ \text{for some} \ t\in (t_{n-1},t_n)\}$, the first excursion that hits $0$,
$\tau(0) \ge t_{M-1} \ge \sum_{n=1}^{M-1} w_n$. Now, $M$ is geometrically distributed with success probability $P(\tau(0)<t_1 \mid N(0)=\lf\rho\rf) \le P(\tau(0)<\tau(\lf\rho\rf) \mid N(0) \ge \rho/2) \le Ce^{-c\rho}$ for some $c,C>0$, by (c), so a union bound gives $P(M-1\le \lf e^{c\rho/2} \rf) \le Ce^{-c\rho}(e^{c\rho/2}+1) = O(e^{-c\rho/2})$. Let $a_n=\1(w_n \ge 1/\rho)$, so that $(a_n)$ are i.i.d.~Bernoulli$(p_\rho)$ with $\liminf_\rho p_\rho := p>0$, since $w_n$ is exponential$(\rho+\lf\rho\rf)$. A large deviations bound for binomial gives $P(\sum_{n=1}^{\lf e^{c\rho/2}\rf} a_n < (p/2) \lf e^{c\rho/2} \rf) \le C_1e^{-c_1 \lf e^{c\rho/2} \rf }= O(e^{-c_1\rho})$ for some $c_1,C_1>0$. The union of these two events has probability at most $C_2e^{-c_2\rho}$ for some $c_2,C_2>0$ and on its complement, $t_{M-1} \ge (1/\rho) \lf e^{c\rho/2}\rf \ge e^{c\rho/3}$ for large $\rho$, completing the proof of (d).
\end{proof}

\paragraph{Acknowledgments:} This research is supported by NSERC Discovery Grants RGPIN-2020-05348 and RGPIN-2024-04653. We thank Khanh Dao-Duc for motivating this project and for his input during the redaction of the manuscript. We thank Jay Newby for initial discussions, guesses and estimations of the hitting time.

\bibliographystyle{plain}
\bibliography{bibliography_arxiv}
\end{document}